\newcommand{\fourIdx}[5]{%
 \setbox1=\hbox{\ensuremath{^{#1}}}%
 \setbox2=\hbox{\ensuremath{_{#2}}}%
 \setbox5=\hbox{\ensuremath{#5}}%
 \hspace{\ifnum\wd1>\wd2\wd1\else\wd2\fi}%
 \ensuremath{\copy5^{\hspace{-\wd1}\hspace{-\wd5}#1\hspace{\wd5}#3}%
 _{\hspace{-\wd2}\hspace{-\wd5}#2\hspace{\wd5}#4}%
 }}
\newcommand{\oldr}{r}
\newcommand{\newr}{r}
\newcommand{\Leb}{{\rm Leb}\,}
\newcommand\deld[1]{}
\newcommand\delr[1]{#1}
 \newcommand \pb{$\hbox{ }$}
\newcommand\wymM{\mathrm{dim}M}
\theoremstyle{plain}
\newtheorem{theorem}{Theorem}[section]
\theoremstyle{remark}
\newtheorem{remark}[theorem]{Remark}
\theoremstyle{plain}
\newtheorem{corollary}[theorem]{Corollary}
\newtheorem{lemma}[theorem]{Lemma}
\newtheorem{proposition}[theorem]{Proposition}
\newtheorem{definition}[theorem]{Definition}
\newtheorem{assumption}[theorem]{Assumption}
\numberwithin{equation}{section}
\newcommand{\lb}{\langle}
\newcommand{\eps}{\varepsilon}
\newcommand{\rb}{\rangle}
\newcommand{\embed}{\hookrightarrow}
\newcommand{\supp}{\,\textrm{supp}\,}
\newcommand{\toup}{\nearrow}
\begin{document}
\title[Stochastic geometric wave equations]{Stochastic geometric wave equations with values in compact Riemannian homogeneous spaces}
\author{Zdzis\l aw Brze\'zniak, Martin Ondrej\'at}
\address{Martin Ondrej\'at: Institute of Information Theory and Automation of the ASCR}
\email{zb500@york.ac.uk,ondrejat@utia.cas.cz}
\thanks{The research of the second named author was supported by the GA\v CR grant No. 201/07/0237,  the pump priming grant of the University of York and by an EPSRC grant number EP/E01822X/1.}
\keywords{stochastic wave equation, Riemannian manifold, homogeneous space}
\subjclass{}

\date{\today}

\maketitle
\begin{abstract}
Let $M$ be a compact Riemannian homogeneous space (e.g. a Euclidean sphere). We prove existence of a global weak solution of the stochastic wave
equation $\mathbf D_t\partial_tu=\sum_{k=1}^d\mathbf D_{x_k}\partial_{x_k}u+f_u(Du)+g_u(Du)\,\dot W$ in any dimension $d\ge 1$, where $f$ and $g$ are continuous multilinear mappings and $W$ is a spatially
homogeneous Wiener process on $\mathbb R^d$ with finite spectral measure. A nonstandard method of constructing weak solutions of SPDEs, that does not rely on martingale representation theorem, is employed.
\end{abstract}
\maketitle

\section{Introduction}\label{sec:intro}

Wave equations subject to random perturbations and/or forcing  have been a subject of deep  studies in last forty years. One of the reasons  for this is that they find  applications in physics, relativistic quantum mechanics or oceanography, see for instance  Caba\~na \cite{Cabana_1972}, Carmona and Nualart \cite{Carm+Nual_1988,Carm+Nual_1988_b}, Chow \cite{Chow_2002}, Dalang \cite{Dal_1999,Dal+Fr_1998,Dal+Lev_2004}, Marcus and Mizel \cite{Marcus+M_1991}, Maslowski and Seidler \cite{Masl+S_1993}, Millet and Morien \cite{Mill+M_2001}, Ondrej\'at \cite{o4,o1}, Peszat and Zabczyk \cite{Pesz+Zab_1997,Pesz+Zab_2000}, Peszat \cite{Pesz_2002},  Millet and Sanz-Sole \cite{Mill+SS_1999}  and references therein. All these research papers are concerned with  equations whose solutions take values in Euclidean spaces. However  many theories and models in modern physics such as harmonic gauges in general relativity, non-linear $\sigma$-models in particle systems, electro-vacuum Einstein equations or Yang-Mills field theory require  the solutions to take values is  a Riemannian
manifold, see for instance  Ginibre and Velo \cite{Gin+Vel_1982} and Shatah and Struwe  \cite{Shatah+Struwe_1998}. Stochastic wave equations with values in Riemannian manifolds were first studied by the authours of the present paper in \cite{Brz+Ondr_2007}, see also \cite{Brz+Ondr_2009},  where the existence and the uniqueness of global strong solutions was proven for equations defined on the one-dimensional Minkowski space $\mathbb{R}^{1+1}$ and arbitrary target Riemannian manifold. In the present paper, we strive to obtain a global  existence result for equations on general Minkowski space $\mathbb{R}^{1+d}$, $d\in\mathbb{N}$,  however, for the price that the target space is a particular Riemannian manifold - a compact homogeneous space, e.g. a sphere.

 Let us first briefly compare our results (obtained in this paper as well as in the earlier one \cite{Brz+Ondr_2007})   with those  for the  deterministic equations. For more details on the latter   we refer the reader to nice surveys on geometric wave equations by  Shatah and Struwe \cite{Shatah+Struwe_1998} and Tataru \cite{Tataru_2004}. Existence and uniqueness of global solutions is known  for the wave equations for an  arbitrary target manifold provided that the Minkowski space of the equation is either $\mathbb R^{1+1}$ or $\mathbb R^{1+2}$, see  Ladyzhenskaya and  Shubov  \cite{lady+Shubov_1981}, Ginibre and Velo \cite{Gin+Vel_1982}, Gu \cite{Gu_1980}, Shatah \cite{Shatah_1988}, Zhou \cite{Zhou_1999},   Christodoulou and Tahvildar-Zadeh \cite{Chr+T-Z_1993} and M\"uller and Struwe  \cite{Muller+Struwe_1996}. In the former case, depending on the regularity of the initial conditions,  the global solutions are known to exist in the weak \cite{Zhou_1999},  respectively  the strong,  sense \cite{Gin+Vel_1982}, \cite{Gu_1980}, \cite{Shatah_1988}.  In the latter case  the existence of global weak solutions has been established in  \cite{Chr+T-Z_1993} and \cite{Muller+Struwe_1996}. In the more interesting and difficult case of  $\mathbb R^{1+d}$ with  $d\ge 3$,    counterexamples have been  constructed, see for instance \cite{Shatah_1988}, \cite{r24}, \cite{Shatah+Struwe_1998},   showing  that smooth solutions may explode in a finite time and  that weak solutions  can be non-unique. Notwithstanding, existence of global solutions can be proven for particular target manifolds, e.g. for compact Riemannian homogeneous spaces, see Freire \cite{Freire_1996}. The aim of the current paper is to consider a stochastic counterpart of Freire's result. In other words, we will  prove  existence of global solutions  for the wave equation with values  a compact Riemannian homogeneous space even if it is subject  to particular (however quite general) random perturbations.

Towards this end, we assume that $M$ is a \textit{compact Riemannian homogeneous space} (see Sections \ref{sec:notation} and  \ref{sec:manifold} for more details) and we consider the following  initial value problem for the stochastic wave equation
\begin{eqnarray}\label{equa1}
&&\mathbf D_t\partial_tu=\sum_{k=1}^d\mathbf D_{x_k}\partial_{x_k}u+f(u,\partial_t u,\partial_{x_1}u,\dots,\partial_{x_d}u)+g(u,\partial_t u,\partial_{x_1}u,\dots,\partial_{x_d}u)\,\dot{W},\\
&&(u(0),\partial_t u(0))=(u_0,v_0)
\label{equa1_i}
\end{eqnarray}
with a random initial data $(u_0,v_0)\in TM$. Here $\mathbf D$ is the connection on the pull-back bundle $u^{-1}TM$ induced by the Riemannian connection on $M$, see e.g. \cite{Shatah+Struwe_1998} and \cite{Brz+Ondr_2007}. In a simpler way, see \cite{Brz+Ondr_2007},
\begin{equation}\label{eqn-acce}
[\mathbf D_t\partial_t\gamma](t) =\nabla_{\partial_t \gamma(t)}(\partial_t \gamma)(t), t\in I
 \end{equation}
 is   the \emph{acceleration} of the curve $\gamma:I\to M$, $I\subset \mathbb{R}$,  at $t\in I$. Note however that deep understanding  of the  covariant derivative $\mathbf{D}$  is not necessary for reading  this paper.  We will denote by $T^kM$, for $k\in\mathbb{N}$,  the vector bundle over $M$ whose fiber at $p\in M$ is equal to $(T_pM)^k$, the $k$-fold cartesian product of $T_pM$. The nonlinear terms $f$ and $g$ in the equation \eqref{equa1} will be assumed to be of the following forms
  \begin{eqnarray}\label{eqn-f}
&&f: T^{d+1}M \ni (p,v_0,\cdots,v_k)\mapsto f_0(p)v_0+\sum_{k=1}^df_k(p)v_k+f_{d+1}(p)\in TM,
\\
&&g: T^{d+1}M \ni (p,v_0,\cdots,v_k)\mapsto g_0(p)v_0+\sum_{k=1}^dg_k(p)v_k+g_{d+1}(p)\in TM,
\label{eqn-g}
\end{eqnarray}
where $f_{d+1}$ and $g_{d+1}$  are continuous  vector field on $M$, $f_0,g_0:M\to\mathbb{R}$ are continuous function and $f_k,g_k:TM\to TM$ , $k=1,\cdots,d$ are continuous vector bundles homomorphisms, see Definition \ref{cvbh}. Finally, we assume that $W$ is a spatially homogeneous Wiener process, see Section  \ref{sec-main}.

The equation (\ref{equa1}) is written in a formal way but  we showed in \cite{Brz+Ondr_2007} that there are various equivalent rigorous definitions of a solution to (\ref{equa1}). In the present paper  we are going to use the one in which, in view of the Nash isometric embedding theorem \cite{Nash_1956}, $M$ is assumed to be
isometrically embedded  into a certain euclidean space $\mathbb{R}^n$ (and so we can identify $M$ with its image). Hence, $M$ is assumed to be a submanifold in $\mathbb{R}^n$, and in this case, we study, instead of (\ref{equa1})-(\ref{equa1_i}), the following classical second order SPDE

\begin{equation}\label{equa2}
\left\{\begin{array}{l}
\partial_{tt}u=\Delta u+\mathbf S_u(\partial_t u,\partial_t u)-\sum_{k=1}^d\mathbf S_u(u_{x_k},u_{x_k})+f_u(Du)+g_u(Du)\,\dot W\\
(u(0),\partial_t u(0))=(u_0,v_0)
\end{array}
\right.
\end{equation}
 where $\mathbf S$ is the second fundamental form of the submanifold $M\subseteq\mathbb{R}^n$.

Finally, we remark that our proof of the main theorem is based on method {(recently introduced by the authors)} of constructing weak solutions of SPDEs, that does not rely on any kind of martingale representation theorem.

\section{Notation and Conventions}\label{sec:notation}

  We will denote by $B_R(a)$,  for  $a\in \mathbb R^d$ and $R>0$,  the open ball in $\mathbb R^d$ with center at $a$ and we put $B_R=B_R(0)$. Now we will list a notation used throughout the whole paper.

\begin{trivlist}
\item[$\bullet$] $\mathbb{N}=\{0,1,\cdots\}$ denotes  the set of natural numbers, $\mathbb{R}_+=[0,\infty)$,
 $\mathrm{Leb}$  denotes the Lebesgue measure,   $L^p=L^p(\mathbb{R}^d;\mathbb{R}^n)$, $L^p_{\textrm{loc}}=L^p_{\textrm{loc}}(\mathbb{R}^d;\mathbb{R}^n)$, $L^p_{\textrm{loc}}$ is a metrizable topological vector space equipped with a natural countable family of seminorms $(p_j)_{j\in\mathbb{N}}$ defined by
\begin{equation}\label{eqn-L^p-p_j}
p_j(u):=\Vert u\Vert _{L^p(B_j)},\qquad u\in L^p_{\textrm{loc}}, \; j\in\mathbb{N}.
\end{equation}
\item[$\bullet$] $W^{k,p}_{\textrm{loc}}=W^{k,p}_{\textrm{loc}}(\mathbb{R}^d;\mathbb{R}^n)$, for $p\in [1,\infty]$ and $k\in \mathbb{N}$,  is the space of all elements $u\in L^p_{\textrm{loc}}$ whose weak derivatives up to order $k$ belong to $L^p_{\textrm{loc}}$.   $W^{k,p}_{\textrm{loc}}$ is a metrizable topological vector space equipped with a natural countable family of seminorms $(p_j)_{j\in\mathbb{N}}$,
\begin{equation}\label{eqn-W^kp-p_j}
p_j(u):=\Vert u\Vert _{W^{k,p}(B_j)},\qquad u\in W^{k,p}_{\textrm{loc}}, \; j\in\mathbb{N}.
\end{equation}
{The spaces $W^{k,2}$ and $W^{k,2}_{loc}$ are denoted by $H^{k}$ and $H^k_{loc}$ respectively}.
\item[$\bullet$] $\mathscr H_{O}=H^{1}(O;\mathbb{R}^n)\oplus L^2(O;\mathbb{R}^n)$ if $O$ is an open subset of $\mathbb{R}^d$ and, for $R>0$,    $\mathscr H_R=H_{B_R}$.

\item[$\bullet$] $\mathscr H=H^{1}(\mathbb{R}^d;\mathbb{R}^n)\oplus L^2(\mathbb{R}^d;\mathbb{R}^n)$ and
 $\mathscr H_{\textrm{loc}}=H^{1}_{\textrm{loc}}(\mathbb{R}^d)\oplus L^2_{\textrm{loc}}(\mathbb{R}^d;\mathbb{R}^n)$,
\item[$\bullet$] $\mathscr D=\mathscr D(\mathbb{R}^d;\mathbb{R}^n)$ is the class of all compactly supported $C^\infty$-class functions
$\varphi: \mathbb{R}^d \to \mathbb{R}^n$,
\item[$\bullet$] Whenever $\mathcal{Y}$ denotes a certain class of functions defined on $\mathbb{R}^d$, by $\mathcal{Y}_{\textrm{comp}}$ we will denote the space of those elements of $\mathcal{Y}$ whose support is a compact subset of $\mathbb{R}^d$. For instance $L^2_{\textrm{comp}}(\mathbb{R}^d)$ and      $H^{k}_{\textrm{comp}}(\mathbb{R}^d)$.
\item[$\bullet$] If $Z$ is a topological space equipped with a countable system of pseudometrics $(\rho_m)_{m\in\mathbb{N}}$, then, without further reference, we will assume that  the topology of $Z$ is metrized by a metric
\begin{equation}\label{eqn-rho}
\rho(a,b)=\sum_{m=0}^\infty \frac1{2^{m}}\min\,\{1,\rho_m(a,b)\}, \; a,b\in Z.
\end{equation}
\item[$\bullet$] By $\mathscr T_2(X,Y)$ we will denote the class of  Hilbert-Schmidt operators from a separable Hilbert space  $X$ to $Y$. By
$\mathscr L(X,Y)$ we will denote  the space of all linear continuous operators from a topological vector space $X$ to $Y$, see \cite[chapter I]{Rudin_1991_FA}. Both these  spaces will be  equipped with the strong $\sigma$-algebra, i.e. the $\sigma$-algebra generated by the family of maps $ \mathscr T_2(X,Y)\ni B\mapsto Bx\in Y$ or $\mathscr L(X,Y)\ni B\mapsto Bx \in Y$, $x\in X$.
\item[$\bullet$] If $(X,\rho)$ is a metric space then we denote by $C(\mathbb{R}_+;X)$ the space of continuous functions $f:\mathbb{R}_+\to X$.      The space $C(\mathbb{R}_+;X)$  in endowed with the metric $\rho_C$ defined by the following formula
\begin{equation}\label{eqn-rho_C}
\rho_C(f,g)=\sum_{m=1}^\infty 2^{-m}\min\,\{1,\sup_{t\in [0, m]}\rho(f(t),g(t))\},\qquad f,g\in C(\mathbb{R}_+,X).
\end{equation}
\item[$\bullet$] If $X$ is a locally convex space  then  by $C_w(\mathbb{R}_+;X)$ we denote the space of all weakly continuous functions $f:\mathbb{R}_+\to X$  endowed   with the locally convex topology generated by the a family $\|\cdot\|_{m,\varphi}$ of  pseudonorms defined by
\begin{equation}\label{eqn-m+varphi}
\|f\|_{m,\varphi}=\sup_{t\in [0, m]}|\varphi(f(t))|,\qquad m\in\mathbb{N},\quad\varphi\in X^*.
\end{equation}

\item[$\bullet$] By $\pi_R$ we will denote various restriction maps to the ball $B_R$,  for example $\pi_R:L^2_{\textrm{loc}}\ni v\mapsto v|_{B_R}\in L^2(B_R)$ or
$\pi_R: \mathscr H_{\textrm{loc}}\ni z\mapsto z|_{B_R}\in\mathscr H_R$.  In the danger of ambiguity we will make this precise.
\item[$\bullet$] {$\zeta$ is a smooth symmetric density on $\mathbb{R}^d$ supported in the unit ball. If $m\in\mathbb{R}_+$, then we put $\zeta_m(\cdot)=m^d\zeta(m\cdot)$. The sequence $(\zeta_m)_{m=1}^\infty$ is called {\it an approximation of identity}.}
\item[$\bullet$] By $\mathscr S$, see  for instance\cite{Rudin_1991_FA},  we will denote the Schwartz space of $\mathbb{R}$- valued   rapidly decreasing $C^\infty$-class  functions on $\mathbb{R}^d$. By $\mathscr S^\prime$ we will denote the space of tempered distributions on $\mathbb{R}^d$, the dual of the space $\mathscr S$.   The Fourier transform,   in the cases of $\mathscr S$,  $\mathscr S^\prime$ as well as $L^2$,  we will  denote by \;$\widehat{}$\;. For example $\widehat\varphi\in \mathscr S$ will denote the Fourier transform of a function $\varphi\in\mathscr S$.

\item[$\bullet$] Given a positive measure $\mu $ on  $\mathbb{R}^d$, we will denote by    $L^2_{\rm {(s)}}(\mathbb{R}^d,\mu)$ the subspace of
$L^2(\mathbb{R}^d,\mu;\mathbb{C})$ consisting of all $\psi $ such that
$\psi=\psi_{\rm {(s)}}$, where  $\psi _{\rm{(s)}}(\cdot)=\overline{\psi(-\cdot)}$.

\end{trivlist}

As mentioned in the Introduction, throughout the whole paper  we will assume that $M$ is a compact Riemannian homogeneous space. In other words,  $M$ is a compact Riemannian manifold representable  as a quotient space $M=G/H$ of a Lie group $G$ by a subgroup $H$. The Moore-Schlafly Theorem, see \cite{Moore+Schl_1980},  yields the existence of a Lie group homomorphism $\rho:G\to\textrm{SO}(n)$ and a $G$-equivariant isometric embedding $\Phi:M\to\mathbb{R}^n$ satisfying $\Phi\circ g=\rho(g)\circ\Phi$ for any $g\in G$. In particular, $M$ is isometrically embedded in the Euclidean space $\mathbb{R}^n$.  Details will be presented in Section \ref{sec:manifold}. We then put

\begin{trivlist}
\item[$\bullet$] $\mathscr H_{\textrm{loc}}(M)=\{(u,v)\in\mathscr H_{\textrm{loc}}:\;\;v(x) \in T_xM\textrm{ for a.e. } x\in M\}$. The strong, resp. weak,  topologies on $\mathscr H_{\textrm{loc}}(M)$, are by definition the traces of the strong, resp. weak topologies on $\mathscr H_{\textrm{loc}}$. In particular, a function $u:[0,\infty)\to \mathscr H_{\textrm{loc}}(M)$ is weakly continuous, iff $u$ is weakly continuous viewed as a $\mathscr H_{\textrm{loc}}$-valued function.
\end{trivlist}

\section{The Wiener process}\label{sec:Wiener}

Given a stochastic basis $(\Omega,\mathscr{F},\mathbb{F},\mathbb{P})$, where $\mathbb{F}=(\mathscr{F}_t)_{t\geq 0}$ is  a filtration,  an $\mathscr S^\prime$-valued process $W=\big(W_t\big)_{t\geq 0}$ is called a  spatially homogeneous Wiener process with a spectral measure $\mu$ which, throughout the paper  we always assume to be positive, symmetric and to satisfy  $\mu(\mathbb{R}^d)<\infty$, if and only if the following three conditions are satisfied
\begin{itemize}
\item $W\varphi:=\big(W_t\varphi\big)_{t\geq 0}$ is a real $\mathbb{F}$-Wiener process,  for every $\varphi\in\mathscr S$,
\item $W_t(a\varphi+\psi)=aW_t(\varphi)+W_t(\psi)$  almost surely for all $a\in\mathbb{R}$, $t\in \mathbb{R}_+$  and $\varphi,\psi\in\mathscr S$,
\item $\mathbb{E}\,\{W_t\varphi_1 W_t\varphi_2\}=t\langle\widehat\varphi_1,\widehat\varphi_2\rangle_{L^2(\mu)}$  for all $t\ge 0$ and $\varphi_1,\varphi_2\in\mathscr S$, where $L^2(\mu)=L^2(\mathbb{R}^d,\mu;\mathbb{C})$.
\end{itemize}

\begin{remark}
The reader is referred to  the works by Peszat and Zabczyk \cite{Pesz+Zab_1997,Pesz+Zab_2000} and Brze{\'z}niak and Peszat \cite{Brz+Pesz_1999}   for further details on spatially homogeneous Wiener processes.
\end{remark}

Let us denote by $H_\mu\subseteq\mathscr S^\prime$ the reproducing kernel Hilbert space of the $\mathscr S^\prime$-valued random vector $W(1)$, see e.g. \cite{DaP+Z_1992}. Then $W$ is an $H_\mu$-cylindrical Wiener process.   Moreover,  see  \cite{Pesz+Zab_1997} and \cite{Brz+Pesz_1999}, then  the following result identifying the space $H_\mu$ is known.

\begin{proposition}\label{prop_H_mu}
\begin{eqnarray*}
{H}_{\mu} &=&\{\widehat{\psi \mu}:\ \psi \in
L^2_{\rm {(s)}}(\mathbb{R}^d,\mu)\},\\
\langle \widehat{\psi\mu},\widehat{\varphi\mu}\rangle_{{H}_{\mu}} &=&
\int _{\mathbb{R}^d}\psi (x)\overline{\varphi (x)}\,d \mu (x),\quad \psi,
\varphi\in L^2_{\rm {(s)}}(\mathbb{R}^d,\mu).
\end{eqnarray*}
\end{proposition}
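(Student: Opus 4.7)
The plan is to construct an explicit isometry $J:L^2_{\rm{(s)}}(\mathbb{R}^d,\mu)\to\mathscr S'$ by $J(\psi)=\widehat{\psi\mu}$ (well defined because finiteness of $\mu$ makes $\psi\mu$ a finite complex Borel measure for $\psi\in L^2(\mu)$, hence a tempered distribution) and to identify its range with $H_\mu$. The general construction of the reproducing kernel Hilbert space of a Gaussian random distribution realises $H_\mu$ as the closure of the covariance image: for every $\varphi\in\mathscr S$ the element $\kappa(\varphi)\in\mathscr S'$ defined by $\langle\kappa(\varphi),\psi\rangle=\mathbb E[W_1\varphi\cdot W_1\psi]$ lies in $H_\mu$ with $\|\kappa(\varphi)\|^2_{H_\mu}=\mathbb E|W_1\varphi|^2$, and $H_\mu$ is the completion of $\{\kappa(\varphi):\varphi\in\mathscr S\}$ in that norm.

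First I would compute $\kappa(\varphi)$ explicitly. Using the covariance identity from the definition of $W$ and the symmetry of $\mu$,
\[
\langle\kappa(\varphi),\psi\rangle=\int_{\mathbb{R}^d}\widehat{\varphi}(x)\overline{\widehat{\psi}(x)}\,d\mu(x)=\int_{\mathbb{R}^d}\widehat{\varphi}(-y)\widehat{\psi}(y)\,d\mu(y)=\langle\widehat{\chi_\varphi\mu},\psi\rangle,
\]
where $\chi_\varphi(x):=\widehat{\varphi}(-x)$. Since $\varphi$ is real, $\widehat{\varphi}$ is hermitian and therefore $\chi_\varphi\in L^2_{\rm{(s)}}(\mathbb{R}^d,\mu)$. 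Thus $\kappa(\varphi)=J(\chi_\varphi)$, and polarization yields $\langle\kappa(\varphi_1),\kappa(\varphi_2)\rangle_{H_\mu}=\int\chi_{\varphi_1}\overline{\chi_{\varphi_2}}\,d\mu$, which is exactly the proposed inner product.

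Next I would extend by density. Complex-valued Schwartz functions are dense in $L^2(\mathbb{R}^d,\mu;\mathbb{C})$, and the bounded idempotent $T\psi(x):=\tfrac12(\psi(x)+\overline{\psi(-x)})$ maps this space onto $L^2_{\rm{(s)}}(\mathbb{R}^d,\mu)$, so the image under $T$ of the complex Schwartz class is dense in $L^2_{\rm{(s)}}$. Each such $T\psi$ is a hermitian Schwartz function, hence equals $\widehat{\varphi}=\overline{\chi_\varphi}$ for some real $\varphi\in\mathscr S$; consequently $\{\chi_\varphi:\varphi\in\mathscr S\}$ is dense in $L^2_{\rm{(s)}}$. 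Therefore $J$ extends to a linear isometry of $L^2_{\rm{(s)}}(\mathbb{R}^d,\mu)$ onto $H_\mu$. Injectivity is immediate: $\widehat{\psi\mu}=0$ in $\mathscr S'$ forces $\psi\mu=0$ as a finite complex measure, so $\psi=0$ $\mu$-almost everywhere.

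The principal obstacle is bookkeeping rather than analytic depth: one must consistently track the conjugate and the sign arising from $\widehat{\psi}(-x)=\overline{\widehat{\psi}(x)}$ for real test functions, invoke the symmetry of $\mu$ at each change of variable, and recognize $L^2_{\rm{(s)}}$ as a real Hilbert space in disguise (its sesquilinear form $\int\psi\overline{\varphi}\,d\mu$ being real-valued precisely because $\mu$ is symmetric). Once these conventions are fixed, the isometry $J$ simultaneously delivers the set description of $H_\mu$ and the inner product formula in the statement.
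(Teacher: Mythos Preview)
The paper does not actually prove this proposition: it is stated as a known identification of $H_\mu$, with the reader referred to Peszat--Zabczyk \cite{Pesz+Zab_1997} and Brze\'zniak--Peszat \cite{Brz+Pesz_1999} for the argument. So there is no ``paper's own proof'' to compare against; what you have written is a self-contained justification of a result the authors simply quote.

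Your argument is correct in outline and in essentially all details. Computing the covariance map $\kappa(\varphi)$ and recognising it as $J(\chi_\varphi)$ with $\chi_\varphi=\widehat\varphi(-\cdot)\in L^2_{\rm{(s)}}$ is exactly the right first step, and the density of $\{\chi_\varphi:\varphi\in\mathscr S\}$ in $L^2_{\rm{(s)}}(\mathbb R^d,\mu)$ via the symmetrisation projector $T$ is the natural way to pass from the Schwartz-class image to all of $L^2_{\rm{(s)}}$. One small point of bookkeeping: you write ``$T\psi=\widehat\varphi=\overline{\chi_\varphi}$'', which is correct, but then conclude density of $\{\chi_\varphi\}$ from density of $\{T\psi\}$; strictly speaking you have shown $\{\overline{T\psi}\}$ is contained in $\{\chi_\varphi\}$. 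This is harmless because the set of hermitian Schwartz functions is invariant under complex conjugation (for hermitian $h$, $\overline h(\cdot)=h(-\cdot)$ is again hermitian Schwartz), so the two sets coincide. With that clarification the isometry $J$ is established and both displayed formulae follow.
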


See \cite{o1} for a proof of the following lemma that states that under some assumptions,  $H_\mu$ is a function space and that multiplication operators are Hilbert-Schmidt from $H_\mu$ to $L^2$.

\begin{lemma}\label{lem-hsop} Assume that  $\mu(\mathbb{R}^d)<\infty$. Then the reproducing kernel Hilbert space $H_\mu$ is continuously embedded in the space $C_b(\mathbb{R}^d)$ and for any $g\in L^2_{\textrm{loc}}(\mathbb{R}^d,\mathbb{R}^n)$ and a Borel set $D\subseteq\mathbb{R}^d$,  the multiplication operator $m_g=\{H_\mu\ni \xi\mapsto g\cdot\xi \in  L^2(D)\}$ is Hilbert-Schmidt. Moreover,  there exists a universal constant $c_\mu$ such that
\begin{equation}\label{boundedness-of-multiplication}
\|m_g\|_{\mathscr T_2(H_\mu,L^2(D))}\leq c_\mu\|g\|_{L^2(D)}.
\end{equation}
\end{lemma}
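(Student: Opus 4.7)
The plan is to prove the two assertions in sequence, with the embedding into $C_b(\mathbb{R}^d)$ feeding directly into the Hilbert--Schmidt estimate.

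\emph{Step 1: Continuous embedding $H_\mu\hookrightarrow C_b(\mathbb{R}^d)$.} I would start from the identification in Proposition~\ref{prop_H_mu}: every $\xi\in H_\mu$ has the form $\xi=\widehat{\psi\mu}$ for some $\psi\in L^2_{(s)}(\mathbb{R}^d,\mu)$ with $\|\xi\|_{H_\mu}=\|\psi\|_{L^2(\mu)}$. Since $\mu$ is finite, Cauchy--Schwarz gives $\psi\in L^1(\mu)$, so $\psi\mu$ is a finite complex measure and its Fourier transform is a bounded continuous function (continuity from dominated convergence in $x$). The symmetry condition $\psi=\psi_{(s)}$ combined with the symmetry of $\mu$ forces $\widehat{\psi\mu}$ to be real-valued. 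Explicitly, one gets the pointwise bound
\[
|\xi(x)|\;=\;\Bigl|\int_{\mathbb{R}^d}e^{-ix\cdot y}\psi(y)\,d\mu(y)\Bigr|\;\le\;\mu(\mathbb{R}^d)^{1/2}\,\|\psi\|_{L^2(\mu)}\;=\;\mu(\mathbb{R}^d)^{1/2}\,\|\xi\|_{H_\mu},
\]
which yields the embedding with norm at most $c_\mu:=\mu(\mathbb{R}^d)^{1/2}$.

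\emph{Step 2: Hilbert--Schmidt bound.} Fix an orthonormal basis $(e_k)_{k\in\mathbb{N}}$ of $H_\mu$. Interpreting $g\cdot\xi$ componentwise (so that $|g(x)|$ is the Euclidean norm in $\mathbb{R}^n$), the definition of the Hilbert--Schmidt norm together with Tonelli's theorem give
\[
\|m_g\|_{\mathscr T_2(H_\mu,L^2(D))}^2\;=\;\sum_{k}\int_D |g(x)|^2|e_k(x)|^2\,dx\;=\;\int_D|g(x)|^2\Bigl(\sum_{k}|e_k(x)|^2\Bigr)\,dx.
\]
For each fixed $x$, the sum $\sum_k|e_k(x)|^2$ equals the squared operator norm of the evaluation functional $\delta_x\colon H_\mu\to\mathbb{R}$, which by Step~1 is bounded by $c_\mu^2=\mu(\mathbb{R}^d)$. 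Substituting in the preceding display gives the desired inequality.

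\emph{Expected obstacle.} The argument is largely bookkeeping once the RKHS description in Proposition~\ref{prop_H_mu} is available. The single genuinely substantive input is recognising that $\xi\in H_\mu$ is represented by a bounded continuous function and not merely a tempered distribution, which is precisely where the finiteness assumption $\mu(\mathbb{R}^d)<\infty$ enters. The interchange of sum and integral is free of charge because the integrand is non-negative, and the minor care needed to handle the vector-valued $g$ is purely notational.
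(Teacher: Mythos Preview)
Your proof is correct. The paper does not supply its own proof of this lemma; it simply refers the reader to \cite{o1}. The argument you give---using Proposition~\ref{prop_H_mu} together with Cauchy--Schwarz to get the $C_b$ embedding, and then Parseval to identify $\sum_k|e_k(x)|^2$ with the squared norm of the evaluation functional---is the standard route and is essentially what appears in that reference.
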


\section{The main result}\label{sec-main}

Roughly speaking our main result states that for each reasonable initial data the equation (\ref{equa2}) has a weak solution both in the PDE and in the Stochastic  senses. By a weak solution to equation (\ref{equa2}) in the PDE sense    we mean a process that satisfies a variational form   identity with a certain class of test functions. By a weak solution in the Stochastic Analysis   sense   to equation (\ref{equa2}) we mean a stochastic basis,  a spatially homogeneous Wiener process (defined on that stochastic basis) and a continuous adapted  process $z$ such that (\ref{equa2}) is satisfied, see the formulation of Theorem \ref{thm-main} below. We recall that $\mathbf S$ is the second fundamental tensor/form of the isometric embedding $M\subseteq\mathbb{R}^n$.

\begin{definition}\label{cvbh} A continuous map $\lambda:TM\to TM$ is  a vector bundles homomorphisms iff  for every $p\in M$
the map  $\lambda_p:T_pM\to T_pM$ is linear.
\end{definition}

In our two previous papers \cite{Brz+Ondr_2007,Brz+Ondr_2009} we found two equivalent definitions of a solution to the stochastic geometric wave equation \eqref{equa1}: intrinsic and extrinsic. Contrary to those papers,   in the present article we  only deal with the  extrinsic solutions (as they refer to the ambient space $\mathbb{R}^n$). Hence,   since we do  not introduce (neither use) an  alternative notion of an intrinsic solution, we will not use the adjective ``extrinsic''. We will discuss these issues in a subsequent publication.

\begin{assumption}\label{assum-f+g}
We assume that $f_0$, $g_0$ are continuous functions on $M$, $f_1,\dots,f_d$, $g_1,\dots,g_d$ are continuous vector bundles homomorphisms and $f_{d+1}$, $g_{d+1}$ are continuous  vector fields on $M$. For $b\in\{f,g\}$, we set
\begin{equation}\label{eqn-b}
b(p,\xi_0,\dots,\xi_d)=b_0(p)\xi_0+\sum_{k=1}^db_k(p)\xi_k+b_{d+1}(p),\quad p\in M,\quad \big(\xi_i\big)_{i=0}^d \in [T_pM]^{d+1}.
\end{equation}

\end{assumption}

\begin{definition}\label{def-solution}
A system
\begin{equation}\label{def-sol}
\Big(\Omega,\mathscr{F},\mathbb{F},\mathbb{P},W,z\Big)
\end{equation}
consisting of
\begin{trivlist}
\item[(1)] a stochastic basis $(\Omega,\mathscr{F},\mathbb{F},\mathbb{P})$,  \item[(2)] a spatially homogeneous Wiener process $W$ and \item[(3)] an adapted, weakly-continuous $\mathscr H_{\textrm{loc}}(M)$-valued process $z=(u,v)$
\end{trivlist}
       is called a weak solution to equation  \eqref{equa1} if and only if for all $\varphi\in\mathscr D(\mathbb{R}^d)$, the following equalities holds $\mathbb{P}$-a.s., for all $t\geq 0$
\begin{eqnarray}
\langle u(t),\varphi\rangle&=&\langle u(0),\varphi\rangle+\int_0^t\left\langle v(s),\varphi\right\rangle\,ds,\label{sol_weak_1}
\\
\label{sol_weak_2}
\langle v(t),\varphi\rangle&=& \langle v(0),\varphi\rangle
+\int_0^t\left\langle\mathbf{S}_{u(s)}\left(v(s),v(s)\right),\varphi\right\rangle+\int_0^t\left\langle f(z(s),\nabla u(s)),\varphi\right\rangle\,ds
\\
\nonumber
&+&\int_0^t\left\langle u(s),\Delta\varphi\right\rangle\,ds-\sum_{k=1}^d\int_0^t\left\langle\mathbf{S}_{u(s)}\left(\partial_{x_k}u(s),\partial_{x_k}u(s)\right),\varphi\right\rangle+\int_0^t\left\langle g(z(s),\nabla u(s))\,dW,\varphi\right\rangle,
\end{eqnarray}
where we assume that all integrals above are convergent and we use the notation \eqref{eqn-b}.\\
We will say that the system in \eqref{def-sol} is a weak solution to the initial value problem (\ref{equa1})-(\ref{equa1_i}),  with an initial data being a  Borel probability measure $\Theta$  on $\mathscr H_{\textrm{loc}}(M)$, if and only if it is  a weak solution to equation  \ref{equa1} and

\begin{equation}\label{eqn-ic}
 \mbox{  the law  of } z(0) \mbox{ is equal to } \Theta.
\end{equation}
\end{definition}

\begin{theorem}\label{thm-main} Assume that $\mu$ is a positive, symmetric Borel measure  on $\mathbb{R}^d$ such that $\mu(\mathbb{R}^d)<\infty$. Assume that $M$ is a compact Riemannian homogeneous space.  Assume that $\Theta$ is a Borel probability measure on $\mathscr H_{\textrm{loc}}(M)$ and that the coefficients $f$ and $g$ satisfy  Assumption  \ref{assum-f+g}.
hen there exists {a weak solution to the initial value problem (\ref{equa1})-(\ref{equa1_i}) with the initial data  $\Theta$}.
\end{theorem}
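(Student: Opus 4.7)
The plan is to combine a penalization/regularization scheme with the Jakubowski--Skorokhod compactness method, then to handle the quadratic-in-gradient nonlinearity by Freire's deterministic trick based on Killing vector fields on the homogeneous space $M$, and finally to identify the limiting stochastic integral by the martingale characterization method alluded to in the abstract. The main obstacle, as in Freire's deterministic argument, is that the critical term $\mathbf{S}_u(\partial_t u,\partial_t u)-\sum_k \mathbf{S}_u(\partial_{x_k}u,\partial_{x_k}u)$ is quadratic in gradients, so only weak $H^1_{\mathrm{loc}}$ convergence is available from energy estimates, which is insufficient to pass to the limit naively; the homogeneous structure of $M$ must be exploited.

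First I would construct approximate solutions $(u_n,v_n)$ by simultaneously smoothing the noise (convolution with $\zeta_m$ of the notation section, so that $W$ becomes effectively finite-dimensional on each ball), truncating/regularizing the coefficients $f,g$, and replacing the manifold constraint with a penalty $-\varepsilon_n^{-1}\nabla F(u)$, where $F\ge 0$ is smooth and vanishes precisely on $M$. The resulting equations on $\mathbb{R}^n$ have globally defined, sufficiently regular coefficients so that existence of a global $\mathscr{H}_{\mathrm{loc}}$-valued adapted solution follows from the authors' earlier fixed-point/Galerkin machinery. A localized energy $E_n^R(t)=\tfrac12\int_{B_R}(|v_n|^2+|\nabla u_n|^2)+\varepsilon_n^{-1}\int_{B_R}F(u_n)$, combined with finite speed of propagation and Itô's formula, yields uniform bounds $\sup_n\mathbb{E}\sup_{t\le T}E_n^R(t)<\infty$ for all $R,T$; here the Hilbert--Schmidt bound of Lemma \ref{lem-hsop} controls the quadratic variation of the stochastic term, and the multilinear structure of $f,g$ together with compactness of $M$ controls the drift after the penalty forces $u_n$ to stay close to $M$. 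These bounds, together with equicontinuity in time extracted from the equation, give tightness of the laws of $(u_n,v_n,W_n)$ on $C(\mathbb{R}_+;L^2_{\mathrm{loc}})\times C_w(\mathbb{R}_+;\mathscr{H}_{\mathrm{loc}})\times C(\mathbb{R}_+;\mathscr{S}')$, via compactness of $H^1(B_R)\hookrightarrow L^2(B_R)$. A Jakubowski-type Skorokhod theorem then produces a new probability space carrying $(\tilde u_n,\tilde v_n,\tilde W_n)\to(\tilde u,\tilde v,\tilde W)$ almost surely in the relevant modes, with $\tilde u(t,x)\in M$ thanks to the penalty.

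The heart of the argument is passing to the limit in the second-fundamental-form term. Here I would use that, by the Moore--Schlafly embedding and the transitive $G$-action on $M$, there exist Killing vector fields $K_1,\dots,K_N$ on $M\subseteq\mathbb{R}^n$ spanning every tangent space. Testing the approximate evolution equation against $K_i(u_n)$ and invoking the Killing identity $\langle\nabla_X K_i,Y\rangle+\langle X,\nabla_Y K_i\rangle=0$ together with $\mathbf{S}_p(\xi,\eta)\perp T_pM$ rewrites the critical quadratic nonlinearity, once paired with $K_i$, as a spatial divergence plus terms that are only \emph{linear} in $(\partial_t u_n,\nabla u_n)$ with coefficients depending continuously on $u_n$. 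Because $\tilde u_n\to\tilde u$ strongly in $L^2_{\mathrm{loc}}$, these coefficients converge strongly while the derivatives converge weakly, so the product converges in the sense of distributions; since the family $\{K_i\}$ spans $T_pM$ pointwise, this is enough to identify the full vector-valued limit of the second-fundamental-form nonlinearity and obtain equations \eqref{sol_weak_1}--\eqref{sol_weak_2}, except for the stochastic integral.

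Finally, to identify the stochastic integral without a martingale representation theorem on the new probability space, I would verify directly that the candidate process
\begin{equation*}
M_\varphi(t)=\langle\tilde v(t)-\tilde v(0),\varphi\rangle-\text{(identified drift terms)}
\end{equation*}
is a continuous square-integrable $\tilde{\mathbb{F}}$-martingale, compute its quadratic variation and its cross-variation with $\tilde W\psi$ for $\psi\in\mathscr{S}$, and check both match the corresponding objects built from $g(\tilde z,\nabla\tilde u)\,d\tilde W$ via Lemma \ref{lem-hsop}. The required convergences of martingale functionals go through because $g$ is continuous in the state and linear in $\nabla u$, so the martingale terms pass to the limit by the strong $L^2_{\mathrm{loc}}$ convergence of $\tilde u_n$ and the weak $L^2_{\mathrm{loc}}$ convergence of $\nabla\tilde u_n$ coupled with continuity of $p\mapsto g_k(p)$ on the compact set $M$. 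A characterization-of-stochastic-integrals lemma then identifies $M_\varphi$ with $\int_0^t\langle g(\tilde z,\nabla\tilde u)\,d\tilde W,\varphi\rangle$, completing the verification of Definition \ref{def-solution}.
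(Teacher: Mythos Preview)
Your strategy is essentially the one the paper follows: penalized approximations with $-m\nabla F$, uniform localized energy estimates, Jakubowski--Skorokhod, and the homogeneous-space structure via the skew-symmetric generators $A^i$ (your ``Killing fields'') to kill the quadratic nonlinearity. Two technical points, however, are arranged differently in the paper and are worth noting, because your ordering creates a difficulty.

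First, the paper does not attempt the martingale identification directly for the $v$-equation. Instead it works throughout with the scalar processes $\mathbf M^i=\langle v,A^iu\rangle_{\mathbb R^n}$. At the approximate level, the It\^o-type lemma (Lemma~\ref{lem-ito}) applied with $Y(y)=A^iy$ shows that $\langle V^m,A^iU^m\rangle$ satisfies an equation with \emph{no} penalty term (since $\langle\nabla F(x),A^ix\rangle=0$) and \emph{no} quadratic term (since $\langle V^m,A^iV^m\rangle=0$ by skew-symmetry). Tightness is then proved not only for $(U^m,V^m)$ but also for these $\langle V^m,A^iU^m\rangle$ in $C_w(\mathbb R_+;L^{r'}_{\mathrm{loc}})$; this is an explicit extra step (Lemma~\ref{tigh3}(3)) that your outline omits. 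The martingale/quadratic-variation/cross-variation identification is carried out for the limiting $\mathbf M^i$ equations (Lemmata~\ref{lem-essee-2}, \ref{lem-essee-1}, Proposition~\ref{prop-Wiener}), where all drift terms are at worst linear in $(\nabla u,v)$, so the limits are unproblematic.

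Second, only \emph{after} the stochastic integral in the $\mathbf M^i$ equations has been identified does the paper reconstruct the $v$-equation, by applying the It\^o-type lemma again to $\sum_i \mathbf M^i(t)Y^i(\mathbf u(t))=\mathbf v(t)$ and invoking the algebraic identity $\mathbf S_p(\xi,\xi)=\sum_i\langle\xi,A^ip\rangle\,d_pY^i(\xi)$ (Lemma~\ref{lem-2ff}). The second fundamental form thus appears only at the very end, as an output of this computation, never as a term one must pass to the limit in or carry through a martingale argument. In your proposal the candidate martingale $M_\varphi$ already contains $\int_0^t\langle \mathbf S_u(v,v),\varphi\rangle\,ds$ as part of the ``identified drift terms''; controlling this $L^1_{\mathrm{loc}}$-valued quadratic term well enough to run the quadratic-variation argument is exactly the difficulty the paper's ordering avoids. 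Minor point: the paper regularizes the coefficients $f,g$ but does not smooth the noise; each $W^m$ is a genuine spatially homogeneous Wiener process with spectral measure~$\mu$.
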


\begin{remark}\label{rem-thm-main-uniqueness} We do not claim  uniqueness of a solution in Theorem \ref{thm-main}, cf. Freire \cite{Freire_1996} where uniqueness of solutions is not known in the deterministic case either.
\end{remark}
\begin{remark}\label{rem-thm-main-nonregularity}
Note that the solution  from Theorem \ref{thm-main} satisfies only $u(t,\omega,\cdot)\in H^1_{loc}(\mathbb{R}^d, \mathbb{R}^n)$, $t\geq 0$, $\omega\in\Omega$. Hence, for $d\geq 2$, the function $u(t,\omega,\cdot)$ need not be continuous in general. Nevertheless,  for almost all $(s,\omega)\in\mathbb{R}_+\times \Omega$, the function $g(z(s),\nabla u(s))$ belongs to $L^2_{\textrm{loc}}$ and hence in view of Lemma \ref{lem-hsop} the It\^o integral in \eqref{sol_weak_2} exists.
\end{remark}

\begin{remark}\label{rem-vbh} In the above Theorem we assume that $f_0$ and $g_0$ are real functions and not general
 vector bundles homomorphisms. We do not know whether our result is true under these more general assumptions.
\end{remark}

Theorem \ref{thm-main} states the mere existence of a solution. The next   result tells us that, among all possible solutions, there certainly exists one that satisfies the ``local energy estimates''.

In order to make this precise we  define the following family of energy functions $\mathbf e_{x,T}(t,\cdot,\cdot)$, where $x\in\mathbb{R}^n$, $T>0$ and $t\in [0,T]$,
\begin{equation}
\label{eqn-e_T}
\mathbf e_{x,T}(t,u,v)=\int_{B(x,T-t)}\left\{\frac12|u(y)|^2+\frac12|\nabla u(y)|^2+\frac 12|v(y)|^2+\mathbf{s}^2\right\}\,dy,\;\quad (u,v)\in\mathscr H_{\textrm{loc}}.
\end{equation}
In the above the constant $\mathbf s^2$ is defined by
\begin{equation}
\label{eqn-s^2}
\mathbf s^2=\max\,\{\|f_{d+1}\|_{L^\infty(M)},\|f_{d+1}\|^2_{L^\infty(M)}+\|g_{d+1}\|^2_{L^\infty(M)}\}.
\end{equation}

\begin{theorem}\label{thm-main2} Let $\mu$ be a positive symmetric Borel measure  on $\mathbb{R}^d$ such that $\mu(\mathbb{R}^d)<\infty$, let $\Theta$ be a Borel probability measure on $\mathscr H_{\textrm{loc}}(M)$ where $M$ is a compact Riemannian homogeneous space and let $f$ and $g$ satisfy Assumption  \ref{assum-f+g}. Then there exists a weak solution $(\Omega,
\mathscr{F},\mathbb{F},\mathbb{P},z,W)$ of (\ref{equa1})) with initial data $\Theta$  such that
\begin{equation}
\label{ineq-energy_01}
\mathbb{E}\,\left\{\mathbf 1_A(z(0))\sup_{s\in [0, t]}L(\mathbf e_{x,T}(s,z(s)))\right\}\leq 4e^{Ct}\mathbb{E}\,\left\{\mathbf 1_A(z(0))L(\mathbf e_{x,T}(0,z(0)))\right\}
\end{equation}
holds for every $T\in \mathbb{R}_+$, $x\in\mathbb{R}^d$, $t\in [0,T]$, $A\in\mathscr B(\mathscr H_{\textrm{loc}})$ and every nonnegative nondecreasing function $L\in C[0,\infty)\cap C^2(0,\infty)$ satisfying (for some $c\in\mathbb{R}_+$)
\begin{equation}\label{eqn-L}
tL^\prime(t)+\max\,\{0,t^2L^{\prime\prime}(t)\}\leq cL(t),\qquad t>0.
\end{equation}
 The constant $C$ in \eqref{ineq-energy_01} depends only on $c$, $c_\mu$ and on the $L^\infty(M)$-norms of  $(f_i,g_i)_{i\in\{0,\cdots,d+1\}}$.
\end{theorem}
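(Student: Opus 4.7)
The plan is to prove Theorem \ref{thm-main2} by revisiting the construction used for Theorem \ref{thm-main} and equipping the approximating processes with the local energy bound \eqref{ineq-energy_01}, then transferring the bound to the weak limit. Concretely, I would work with a sequence of approximants $z^n=(u^n,v^n)$ obtained by penalising the constraint $u\in M$ (for instance via a smooth retraction onto a tubular neighbourhood of $M$ in $\mathbb{R}^n$) and by mollifying the noise in space with the approximation of identity $\zeta_n$. The resulting equations are classical semilinear SPDEs in $\mathscr H_{\textrm{loc}}$ with coefficients that are Lipschitz on bounded sets, so $z^n$ exists as a strong solution, has enough regularity to apply It\^o's formula, and will satisfy the analogue of \eqref{ineq-energy_01} by a direct computation; the constants will not deteriorate with $n$ because the bounds depend only on the quantities appearing in \eqref{eqn-s^2} and on $c_\mu$.

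The core of the proof is an It\^o computation for $t\mapsto L(\mathbf e_{x,T}(t,z^n(t)))$. Two cancellations do most of the work. First, the classical wave--equation flux identity for the shrinking ball $B(x,T-t)$ produces a boundary term
\begin{equation*}
-\int_{\partial B(x,T-t)}\bigl(e^n-v^n\,\nabla u^n\cdot\nu\bigr)\,d\sigma \leq 0,
\end{equation*}
where $e^n$ is the energy density; this is the finite--propagation--speed term. Second, the second fundamental form contribution $\langle \mathbf S_{u^n}(v^n,v^n)-\sum_k\mathbf S_{u^n}(\partial_{x_k}u^n,\partial_{x_k}u^n),v^n\rangle$ vanishes in the limit since $\mathbf S_u(\cdot,\cdot)\perp T_uM$ and $v\in T_uM$, and at the approximate level is bounded by the distance of $u^n$ to $M$ times the energy. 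The remaining drift and It\^o correction terms are dominated, via Assumption \ref{assum-f+g} and Lemma \ref{lem-hsop}, by $C(\mathbf e_{x,T}+\mathbf s^2)$. Combined with the structural hypothesis $tL'(t)+(t^2L''(t))_+\leq cL(t)$, which is exactly what is needed to absorb the It\^o terms $L'(\mathbf e)d\mathbf e$ and $L''(\mathbf e)d\langle\mathbf e\rangle$ into $L(\mathbf e)$, this yields
\begin{equation*}
dL(\mathbf e_{x,T}(t,z^n(t)))\leq CL(\mathbf e_{x,T}(t,z^n(t)))\,dt+dN^n_t,
\end{equation*}
with $N^n$ a martingale satisfying $\langle N^n\rangle_t\leq C\int_0^tL(\mathbf e_{x,T}(s,z^n(s)))^2\,ds$. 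Multiplying by the $\mathscr F_0$--measurable factor $\mathbf 1_A(z^n(0))$, taking expectations and applying Gronwall yields the estimate at each fixed $t$; the supremum over $s\in[0,t]$ and the constant $4$ in \eqref{ineq-energy_01} come from Doob's $L^2$ maximal inequality applied to $e^{-Cs}L(\mathbf e_{x,T}(s,z^n(s)))$ after a localising stopping time and monotone passage.

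The final step is transferring the bound to the weak solution $z$ produced by the existence proof. Reusing the Jakubowski/Skorokhod representation of Theorem \ref{thm-main}, extract versions of $z^n$ converging a.s.\ in the weak topology of $C_w(\mathbb R_+;\mathscr H_{\textrm{loc}})$ to $z$. Because $\mathbf e_{x,T}(s,\cdot)$ is lower semicontinuous with respect to the weak $\mathscr H_{\textrm{loc}}$--topology and $L$ is nondecreasing and continuous, Fatou's lemma bounds the left-hand side of \eqref{ineq-energy_01} for $z$ by the liminf of those for $z^n$, while the right-hand side passes to the limit since the initial law $\Theta$ is fixed and $\mathbf e_{x,T}(0,\cdot)$ is a.s.\ continuous along the Skorokhod sequence. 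The main obstacle I expect is reconciling the penalty approximation with the tangency cancellation: controlling $\mathrm{dist}(u^n,M)$ uniformly in the energy norm is delicate and may force a more structured approximation (e.g.\ a Ginzburg--Landau penalty in the spirit of Freire) or a projection of the equation onto the stochastic tangent bundle prior to taking limits. A secondary, more technical, issue is handling $\sup_{s\in[0,t]}$ together with weak lower semicontinuity, which is resolved by combining Doob at the approximate level with a.s.\ convergence of $z^n(s)$ at a countable dense set of times $s$ supplied by Skorokhod.
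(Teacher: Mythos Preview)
Your overall strategy---prove the energy inequality for a penalised approximation and pass it to the weak limit via lower semicontinuity and Fatou---is the same as the paper's. But your treatment of the approximate equation contains a real confusion that, as written, would not close.

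The penalised approximant in the paper is
\[
\partial_{tt}U^m=\Delta U^m - m\nabla F(U^m) + f^m(U^m,\nabla_{(t,x)}U^m) + g^m(U^m,\nabla_{(t,x)}U^m)\,\dot W^m,
\]
and it has \emph{no} second fundamental form term at all. So your ``second cancellation'', namely that $\langle \mathbf S_{u^n}(v^n,v^n)-\sum_k\mathbf S_{u^n}(\partial_{x_k}u^n,\partial_{x_k}u^n),\,v^n\rangle$ is controlled by $\mathrm{dist}(u^n,M)$ times the energy, is simply not part of the computation; there is nothing of that form to cancel. What actually happens is that the penalty drift contributes $\langle v^m, -m\nabla F(u^m)\rangle=-\partial_t\bigl(mF(u^m)\bigr)$ to the energy balance, and one absorbs this by working with the \emph{augmented} local energy
\[
\mathbf e_{x,T,mF}(t,u,v)=\int_{B(x,T-t)}\Bigl\{\tfrac12|\nabla u|^2+\tfrac12|u|^2+\tfrac12|v|^2+mF(u)+\mathbf s^2\Bigr\}\,dy.
\]
With this choice the It\^o computation (which the paper outsources to \cite{o5}, Theorems~5.1 and~5.2) gives the inequality \eqref{ineq_apriori_01} for $\mathbf e_{x,T,mF}$ with constants depending only on $c$, $c_\mu$ and the $L^\infty$-norms of the $f_i,g_i$; the penalty parameter $m$ does not enter the constant. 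This single observation removes precisely the obstacle you flagged: the uniform bound on $\mathbf e_{x,T,mF}$ yields $\int_{B(x,T-t)}mF(u^m)\,dy\le C$, hence $F(u)=0$ in the limit, without any separate control on $\mathrm{dist}(u^n,M)$.

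The passage to the limit is then the step you describe, with one simplification you did not exploit: since $F\ge 0$ one has $\mathbf e_{x,T,m_kF}\ge \mathbf e_{x,T}$ on the left-hand side, while on the right-hand side the initial law $\Theta$ is supported in $\mathscr H_{\textrm{loc}}(M)$, so $F(u^k(0))=0$ and $\mathbf e_{x,T,m_kF}(0,z^k(0))=\mathbf e_{x,T}(0,z^k(0))$ exactly. Thus the right-hand side is independent of $k$, and weak lower semicontinuity of $\mathbf e_{x,T}(s,\cdot)$ together with Fatou handles the left-hand side (the $\sup_{s\le t}$ is already inside the expectation at the approximate level, so no extra argument is needed there).
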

\begin{remark}
We owe some explanation about the meaning of the energy inequality \eqref{ineq-energy_01}. First of all please note that for $z=(u,v)\in\mathscr H_{\textrm{loc}}$ we have
\begin{eqnarray}
\nonumber
\mathbf e_{x,T}(0,z)&=&\mathbf e_{x,T}(0,u,v)=\int_{B(x,T)}\left\{\frac12|u(y)|^2+\frac12|\nabla u(y)|^2+\frac 12|v(y)|^2+\mathbf{s}^2\right\}\,dy\\
\nonumber
&=&\frac12\vert u\vert_{W^{1,2}(B(x,T))}^2+\frac12\vert v\vert_{L^2(B(x,T))}^2+\frac{T}2\mathbf{s}^2\\
\label{eqn-e_T_02}
&=&\frac12\vert z\vert_{\mathscr H_{B(x,T)}}^2+\frac{T}2\mathbf{s}^2.
\end{eqnarray}
Similarly, we have
for $z=(u,v)\in\mathscr H_{\textrm{loc}}$
\begin{eqnarray}
\label{eqn-e_T_03}
\mathbf e_{x,T}(s,z)
&=&\frac12\vert z\vert_{\mathscr H_{B(x,T-s)}}^2+\frac{T-s}2\mathbf{s}^2.
\end{eqnarray}
Hence, if a system $(\Omega,\mathscr{F},\mathbb{F},\mathbb{P},z,W)$  is a solution to the initial value problem (\ref{equa1})-(\ref{equa1_i}) and  $A\in\mathscr B(\mathscr H_{\textrm{loc}})$ then the inequality \eqref{ineq-energy_01} becomes
\begin{equation}
\label{ineq-energy_02}
\mathbb{E}\,\left\{\mathbf 1_A(z(0))\sup_{s\in [0, t]}L \big( \frac12\vert z\vert_{\mathscr H_{B(x,T-s)}}^2+\frac{T-s}2\mathbf{s}^2 \big)\right\}
\leq
4e^{Ct}\int_{A}\Big[L\big( \frac12\vert z\vert_{\mathscr H_{B(x,T)}}^2+\frac{T}2\mathbf{s}^2\big)\Big] d\Theta(z).
\end{equation}
In particular, if we take a function $L:\mathbb{R}_+\ni t\mapsto \sqrt{t}\in\mathbb{R}_+$, which satisfies the
\begin{equation}
\label{ineq-energy_04}
\mathbb{E}\,\left\{\mathbf 1_A(z(0))\sup_{s\in [0, t]} \big( \frac12\vert z\vert_{\mathscr H_{B(x,T-s)}}^2+\frac{T-s}2\mathbf{s}^2 \big)^{1/2}\right\}
 \leq  4e^{Ct}\int_{A}\Big[\big( \frac12\vert z\vert_{\mathscr H_{B(x,T)}}^2+\frac{T}2\mathbf{s}^2\big)^{1/2}\Big] d\Theta(z).
\end{equation}
\end{remark}

\section{A new It\^o formula}\label{sec-ito}

In general, neither mild nor weak solutions of SPDEs are semimartingales on their state spaces. Hence, if we need to apply smooth transformations, the It\^o formula cannot be applied directly and certain approximations need to be done to justify the formal Ansatz. The aim of this section is to formulate and prove such an Ansatz which is in fact a special form of an It\^o formula. The regularity assumptions on the processes make this a new and hopefully interesting result. It is certainly crucial for our purposes, see the proof of Theorem \ref{thm-main}. In Section \ref{sec:outline} we will formulate a result, see Proposition
\ref{prop-main}, which can be proved by means of this It\^o Lemma. {This result shows the key idea of the main existence result of  this paper.}

To this end, let us introduce the following trilinear form
\begin{equation}\label{forb}
b(u,v,{\varphi})=\int_{\mathbb{R}^d}\langle u(x),v(x)\rangle_{\mathbb{R}^m}\,\varphi(x)\,dx
\end{equation}
defined for $\varphi$, $u$ and $v$ such that the integral on the RHS of \eqref{forb} converges.

\begin{lemma}\label{lem-ito} Assume that $q\in \{1,2\}$.
Let $u$ be an adapted $H^1_{\textrm{loc}}(\mathbb{R}^d;\mathbb{R}^n)$-valued  weakly continuous  process. Let  $v$, resp. $w$ be progressively measurable $L^2_{\textrm{loc}}(\mathbb{R}^d;\mathbb{R}^n)$-, resp. $L^2_{\textrm{loc}}(\mathbb{R}^d;\mathbb{R}^k)$-valued process. Assume that $U$ is a separable Hilbert space.
Assume that   $h_0$ is a  progressively measurable $L^q_{\textrm{loc}}(\mathbb{R}^d;\mathbb{R}^k)$-valued process, $h_1\dots,h_d$ are progressively measurable $L^2_{\textrm{loc}}(\mathbb{R}^d;\mathbb{R}^k)$-valued processes and
  $g$ is  an $\mathscr L(U,L^2_{\textrm{loc}}(\mathbb{R}^d;\mathbb{R}^k))$-valued process   such that $g\xi$ is $L^2_{\textrm{loc}}(\mathbb{R}^d;\mathbb{R}^k)$-valued progressively measurable  for every $\xi\in U$. Assume that these processes satisfy the following integrability condition: for every $T>0$, $\mathbb{P}$-almost surely,
\begin{eqnarray}
&&\int_0^T\left\{\|v(s)\|^2_{L^2(B_T;\mathbb{R}^n)}+\|w(s)\|^2_{L^2(B_T;\mathbb{R}^k)}+\|g(s)\|^2_{\mathscr T_2(U;L^2(B_T;\mathbb{R}^k))}\right\}\,ds<\infty,\\
&&\int_0^T\Big\{\|h_0(s)\|_{L^q(B_T;\mathbb{R}^k)}+\sum_{k=1}^d\|h_k(s)\|_{L^2(B_T;\mathbb{R}^k)}\Big\}\,ds<\infty.
\end{eqnarray}
Assume finally that  for each  $\varphi\in\mathscr D(\mathbb{R}^d)$ and  for every $t\ge 0$, $\mathbb{P}$-a.s.,
\begin{eqnarray}
\label{eqn_u-phi_1}
\langle u(t),\varphi\rangle&=&\langle u(0),\varphi\rangle+\int_0^t\langle v(s),\varphi\rangle\,ds,
\\
\label{eqn_u-phi_2}
\langle w(t),\varphi\rangle&=&\langle w(0),\varphi\rangle+\int_0^t \Big\{\langle h_0(s),\varphi\rangle+\sum_{k=1}^d\left\langle h_k(s),\partial_{x_k}\varphi\right\rangle\Big\}\,ds+\int_0^t\langle\varphi,g(s)\,dW\rangle.
\end{eqnarray}
 Let $Y:\mathbb{R}^n\to\mathbb{R}^k$ be a $C^\infty$-class  function such that $Y^\prime$ is uniformly bounded if $q=2$, or $Y$ and $Y^\prime$ are uniformly bounded if $q=1$. Then for every $t\ge 0$ and each $\varphi\in\mathscr D(\mathbb{R}^d)$, $\mathbb{P}$-a.s.,
\begin{eqnarray}
\label{eqn_B-phi}
b(w(t),Y(u(t)),{\varphi})&=&b({\varphi},w(0),Y(u(0)))+\int_0^t\, b\left(h_0(s),Y(u(s)),{\varphi}\right)\,ds
\\
\nonumber
&+&\sum_{k=1}^d\int_0^t b\left(h_k(s),Y(u(s)),{\partial_{x_l}\varphi}\right)\,ds
\\
\nonumber
&+& \sum_{k=1}^d\int_0^tb\left(h_k(s),Y^\prime(u(s))\partial_{x_k} u(s),{\varphi}\right)\,ds
\nonumber
\\
&+&\int_0^t \, b\left(w(s),Y^\prime(u(s))v(s),{\varphi} \right)\,ds+\int_0^t\, b\left(g(s)\,dW,Y(u(s)),{\varphi}\right).
\nonumber
\end{eqnarray}
\end{lemma}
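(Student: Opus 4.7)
The plan is to mollify both equations \eqref{eqn_u-phi_1} and \eqref{eqn_u-phi_2} in the spatial variable, reducing them to a pair of pointwise semimartingale identities at each $y\in\mathbb R^d$ to which the finite-dimensional It\^o formula applies; the desired identity \eqref{eqn_B-phi} will then be obtained by integrating against the test function $\varphi$ and passing to the limit in the mollification parameter.

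Concretely, fix $y\in\mathbb R^d$ and $m\in\mathbb N$ and test \eqref{eqn_u-phi_1} and \eqref{eqn_u-phi_2} against $\zeta_m(y-\cdot)\in\mathscr D$. Writing $u_m:=\zeta_m*u$ and similarly $v_m,w_m,(h_j)_m$, and using $\partial_{x_k}\zeta_m(y-x)=-\partial_{y_k}\zeta_m(y-x)$ to transfer a derivative through the convolution, one obtains, $\mathbb P$-a.s.\ for every $t\ge 0$,
\begin{align*}
u_m(t,y)&=u_m(0,y)+\int_0^t v_m(s,y)\,ds,\\
w_m(t,y)&=w_m(0,y)+\int_0^t\!\Big((h_0)_m(s,y)-\sum_{k=1}^d\partial_{y_k}(h_k)_m(s,y)\Big)ds+\int_0^t\!\langle\zeta_m(y-\cdot),g(s)\,dW(s)\rangle.
\end{align*}
For fixed $y$, the process $s\mapsto u_m(s,y)$ is absolutely continuous, hence of bounded variation, while $s\mapsto w_m(s,y)$ is a continuous semimartingale; their cross variation therefore vanishes. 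The classical It\^o formula applied to $(a,b)\mapsto\langle a,Y(b)\rangle_{\mathbb R^k}$ then gives an expansion of $\langle w_m(t,y),Y(u_m(t,y))\rangle$ in which the only second-order contribution is the drift $\langle w_m(s,y),Y'(u_m(s,y))v_m(s,y)\rangle\,ds$.

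I would next multiply this pointwise expansion by $\varphi(y)$, integrate over $y\in\mathbb R^d$, invoke the stochastic Fubini theorem (in the form of \cite{DaP+Z_1992}) to exchange the $dy$-integral with the $dW$-integral, and integrate by parts in $y$ on the $\partial_{y_k}(h_k)_m$ terms to distribute the derivative onto $Y'(u_m)\partial_{y_k}u_m\cdot\varphi$ and $Y(u_m)\,\partial_{y_k}\varphi$. This produces exactly \eqref{eqn_B-phi} with $u,v,w,h_j,g$ replaced by their mollifications. To conclude, pass to the limit $m\to\infty$: for $\mathbb P$-a.a.\ $\omega$ and every $t$, standard mollifier convergence gives $u_m(t)\to u(t)$ in $H^1(B_R)$, $w_m(t)\to w(t)$ in $L^2(B_R)$, and analogous convergences in $L^q$ or $L^2$ for the processes $v,h_0,\dots,h_d$ on any ball $B_R\supset\supp\varphi$. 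Combined with the uniform bound on $Y'$ (and, when $q=1$, also on $Y$, which is precisely the case split in the hypotheses), each deterministic integrand converges in $L^1(ds)$ after a routine stopping-time localisation, so the time integrals converge in probability.

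The main obstacle is the stochastic integral term, for which one needs
\[
\int_0^t b\bigl([g\,dW]_m,Y(u_m),\varphi\bigr)\longrightarrow\int_0^t b\bigl(g\,dW,Y(u),\varphi\bigr)
\]
in probability, uniformly in $t$ on compacts. After the same localisation and exchanging the order of integration as above, this reduces to the $L^2(\Omega\times[0,T])$-convergence of a family of $\mathscr T_2(U,\mathbb R)$-valued integrands, which follows from It\^o's isometry, Lemma \ref{lem-hsop}, and the $L^2_{\mathrm{loc}}$-convergence of $\varphi\, Y(u_m)$ to $\varphi\, Y(u)$ together with the Hilbert--Schmidt convergence $\zeta_m*g(s)\to g(s)$ on $\supp\varphi$ via dominated convergence.
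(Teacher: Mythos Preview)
Your approach via pointwise mollification and the finite-dimensional It\^o formula is sound in outline and shares the paper's mollification idea, but the paper takes a different technical route that sidesteps two of your bookkeeping steps. Rather than applying It\^o at each $y\in\mathbb R^d$ separately and then invoking stochastic Fubini to integrate against $\varphi(y)\,dy$, the paper observes that the mollified processes $u^m,w^m$ are continuous semimartingales with values in $H^j(B_R;\mathbb R^n)\oplus H^j(B_R;\mathbb R^k)$ for any fixed $j>d/2$; by Sobolev embedding the functional
\[
Q(u,v)=\int_{B_R}\varphi(x)\,\langle v(x),Y(u(x))\rangle_{\mathbb R^k}\,dx
\]
is $C^\infty$ on this Hilbert space, so the infinite-dimensional It\^o formula applied to $Q(u^m,w^m)$ yields the mollified version of \eqref{eqn_B-phi} in one stroke. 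This avoids both the stochastic Fubini exchange and the issue you gloss over, namely that your pointwise semimartingale identities come with a $\mathbb P$-null exceptional set that a priori depends on $y$, so that applying It\^o at uncountably many $y$ simultaneously needs an additional continuity-in-$y$ argument. Your route has the advantage of using only the classical finite-dimensional It\^o formula, at the cost of these extra arguments; the paper's route is cleaner but requires the Hilbert-space It\^o formula. One minor correction: your appeal to Lemma~\ref{lem-hsop} for the convergence of the stochastic term is misplaced, since that lemma is specific to the reproducing kernel space $H_\mu$, whereas in the present statement $U$ is an arbitrary separable Hilbert space and the Hilbert--Schmidt bound on $g$ is assumed directly; It\^o's isometry together with dominated convergence in $\mathscr T_2(U,L^2(B_R))$ suffices.
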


\begin{proof}[Proof of Lemma \ref{lem-ito}]
 Let  $(\zeta_m)_{m=1}$ be the approximation of identity introduced in Section \ref{sec:notation}.  Let $K_m$ be the convolution operator with the function $\zeta_m$, i.e.  $$K_mh=h*\zeta_m, \;\; \mbox{ for every locally integrable function } h.$$

 Define, for $m\in\mathbb{N}$, $i=1,\cdots,d$ and $t\in [0,T]$,
 \begin{eqnarray*} u^m(t)&=&K_m(u(t)), \quad  v^m(t)=K_m(v(t)),\\
 w^m(t)&=&K_m(w(t)),\quad h^m_i(t)=K_m(h_i(t)),
 \end{eqnarray*}
and   \begin{eqnarray*}
  g^m(t)\xi&=&K_m(g(t)\xi),  \mbox{ for } \xi\in U.
   \end{eqnarray*}
      For any test function $\psi \in\mathscr D(\mathbb{R}^d)$   we put  $\varphi^m=K_m\psi$ and  use  it as a test function in the equalities (\ref{eqn_u-phi_1}-\ref{eqn_u-phi_2}). Since $\langle u,K_m\psi\rangle=\langle K_m u,\psi\rangle$ for all ``good'' functions $u$ and $\psi$, we infer that
  for every $t\ge 0$, the following equalities hold a.s.
\begin{eqnarray}
\label{eqn_u^m-phi_1}
\langle u^m(t),\psi\rangle&=&\langle u^m(0),\psi\rangle+\int_0^t\langle v^m(s),\psi\rangle\,ds,
\\
\label{eqn_u^m-phi_2}
\langle w^m(t),\psi\rangle&=&\langle w^m(0),\psi\rangle+\int_0^t\left\langle h^m_0(s)-\sum_{k=1}^d\partial_{x_k}h^m_l(s),\psi\right\rangle\,ds+\int_0^t\langle\psi,g_m(s)\,dW\rangle.
\end{eqnarray}
Let us now fix a  natural number $j$ such that $j>d/2$ and define  adapted  $H^j_{loc}(\mathbb{R}^d;\mathbb{R}^n)$-, resp. $H^j_{loc}(\mathbb{R}^d;\mathbb{R}^k)$-valued processes $a^m$ and $b^m$  by the following formulae
\begin{eqnarray}\label{eqn_a^m_1}
a^m(t)&=&u^m(0)+\int_0^t v^m(s)\,ds,
\\
\label{eqn_n^m_1}
b^m(t)&=&w^m(0)+\int_0^t\left\{h^m_0(s)-\sum_{l=1}^d\partial_{x_l}h^m_l(s)\right\}\,ds+\int_0^tg_m(s)\,dW.
\end{eqnarray}
Then, from the last four  equalities we infer that $a^m(t)=u^m(t)$, $b^m(t)=w^m(t)$ almost surely for every $t\ge 0$. Indeed, for each $t$ and $\psi$,  $\langle a^(t),\psi\rangle=\langle u^m(t),\psi\rangle$ on a set $\Omega(t,\psi)$ of full measure which however may depend on $\psi$. Taking a countable dense sequence $(\psi_l)$, we infer that $a^m(t)=u^m(t)$ on a set $\bigcap_{l}\Omega(t,\psi_l)$. Note that this set is  of full measure. Let us also observe that both processes $a$ and $b$ are semi-martingales.
Let us now choose $R>0$ such that $\supp \psi \subset B_R$.  Since $j>d/2$, in view of the Sobolev embedding Theorem,   we infer that  a map
\begin{equation}\label{def-Q}
Q:H^{j}(B_R;\mathbb{R}^n)\oplus H^j(B_R;\mathbb{R}^k)\ni (u,v)\mapsto \int_{B_R}\varphi(x)\langle v(x),Y(u(x))\rangle_{\mathbb{R}^k}\,dx \in \mathbb{R}
\end{equation}
is of $C^\infty$-class  and for all $z=(u,v),w=(w_1,w_2),\tilde w=(\tilde  w_1,\tilde  w_2)\in H^{j}(B_R;\mathbb{R}^n)\oplus H^j(B_R;\mathbb{R}^k)$ we have
\begin{eqnarray*}
Q^\prime(z)w&=&\int_{B_R}\langle w_2(x),Y(u(x))\rangle_{\mathbb{R}^k}\,\varphi(x)\,dx+\int_{B_R}\langle v(x),Y^\prime(u(x))w_1(x)\rangle_{\mathbb{R}^k}\,\varphi(x)\,dx,
\\
Q^{\prime\prime}(z)(w,\tilde w)&=&\int_{B_R}\langle w_2(x),Y^\prime(u(x))\tilde  w_1(x)\rangle_{\mathbb{R}^k}\,\varphi(x)\,dx
+\int_{B_R}\langle\tilde  w_2(x),Y^\prime(u(x))w_1(x)\rangle_{\mathbb{R}^k}\,\varphi(x)\,dx
\\
&+&\int_{B_R}\langle v(x),Y^{\prime\prime}(u(x))(w_1(x),\tilde  w_1(x))\rangle_{\mathbb{R}^k}\,\varphi(x)\,dx.
\end{eqnarray*}
Since $Q^{\prime\prime}(z)(w,w)=0$ if $w_1=0$ and since
$$b(w,Y(u),\phi)=Q(w,u),   \mbox{ if }  (w,u)\in H^j(B_R;\mathbb{R}^n)\oplus H^j(B_R;\mathbb{R}^k),$$
by the
 It\^o formula applied to  the $H^{j}(B_R;\mathbb{R}^n)\oplus H^j(B_R;\mathbb{R}^k)$-valued semi-martingale $(a^m,b^m)$ and the function $Q$, we obtain,  for every $t\ge 0$, almost surely,
\begin{eqnarray*}
b(w^m(t),Y(u^m(t)),{\varphi})&=&b(w^m(0),Y(u^m(0)),{\varphi})+\int_0^t\, b\left(h^m_0(s),Y(u^m(s)),{\varphi}\right)\,ds
\\
&+&\sum_{k=1}^d\int_0^t b\left(h^m_k(s),Y(u^m(s)), {\partial_{x_k}\varphi}\right)\,ds
\\
&+&\sum_{k=1}^d\int_0^t\, b\left(h^m_k(s),Y^\prime(u^m(s))\partial_{x_k}u^m(s),{\varphi}\right)\,ds
\\
&+&\int_0^t\, b\left(w^m(s),Y^\prime(u^m(s))v^m(s), {\varphi}\right)\,ds
\\
&+&\int_0^t\, b\left(g^m(s)\,dW,Y(u^m(s)),{\varphi}\right).
\end{eqnarray*}
 Now it suffices to pass with $m$ to infinity.  For this we just need to  consider  two cases.
 \begin{trivlist}
 \item[\textit{Case 1}] $q=2$. Now the boundedness of $\nabla Y$ implies the convergence
 $$Y(u^m(s))\to Y(u(s)) \mbox{ in  } L^2_{\textrm{loc}}(\mathbb{R}^d,\mathbb{R}^k),$$
 for every $s\in [0,T]$.
  \item[\textit{Case 2}] $q=1$. Now the boundedness of $Y$ implies the convergence  $$b(h^m_0(s),Y(u^m(s)),\varphi)\to b(h_0(s),Y(u(s)),\varphi),$$ for every $s\in [0,T]$.
 \end{trivlist}
Simple and  standard but tedious details are omitted. The proof is now complete.

\end{proof}

\section{The target manifold $M$}\label{sec:manifold}

Let $M$ be a compact Riemannian manifold and let the following hypotheses be satisfied:

\begin{itemize}
\item[\textbf{M1}] There exists a metric-preserving diffeomorphism of $M$ to a submanifold in $\mathbb{R}^n$ for some $n\in\mathbb{N}$ (and from now on we will  identify $M$ with its image).
\item[\textbf{M2}] There exists a $C^\infty$-class  function $F:\mathbb{R}^n\to[0,\infty)$ such that $M=\{x:F(x)=0\}$ and $F$ is constant outside some large ball in $\mathbb{R}^n$.
\item[\textbf{M3}] There exist a finite sequence $(A^i)_{i=1}^N$   of skew symmetric linear operators on  $\mathbb{R}^n$    such that for each $i\in\{1,\cdots,N\}$,
\begin{eqnarray}\label{ass-M3_1}
&&\langle\nabla F(x),A^ix\rangle=0, \;\mbox{ for every } x\in\mathbb{R}^n,\\
\label{ass-M3_2}
&&A^ip\in T_pM,\; \mbox{ for every } p\in M.
\end{eqnarray}
\item[\textbf{M4}] There exist a family $\big(h_{ij}\big)_{1\leq i,j\leq N}$ of $C^\infty$-class  $\mathbb{R}$-valued functions on $M$
    such that
\begin{equation}
\label{ass-M4_1}
\xi=\sum_{i=1}^N\sum_{j=1}^Nh_{ij}(p)\langle\xi,A^ip\rangle_{\mathbb{R}^n}A^jp,\;  p\in M,\; \xi\in T_pM.
\end{equation}
\end{itemize}

\begin{remark}\label{rem-hom-space} Let us show that compact Riemannian homogeneous spaces satisfy \textbf{M1} - \textbf {M4}. For this assume that $M$ is a compact Riemannian manifolfd with a compact Lie group $G$ acting  transitively by isometries on $M$, i.e.  there exists a smooth map
\begin{equation}
\label{eqn-pi}
 \pi: G\times M \ni (g,p)\mapsto gp \in  M
\end{equation}
such that, with $e$ being the unit element in $G$,
\begin{trivlist}
\item[\pb] for all  $g_0,g_1\in G$ and $p\in M$,
 $ep=p$ and $(g_0g_1) p=g_0(g_1p)$,
\item[\pb]   there exists (equivalently, for all) $p_0\in M$ such that $\{gp_0:g\in G\}=M$,
\item[\pb]  for every $g\in G$, the map $\pi_g:M\ni p\mapsto \pi(g,p)\in M$ is an isometry.
\end{trivlist}

By   \cite[Theorem 2.20 and Corollary 2.23]{kir}, see also   \cite[Theorem 3.1]{onvi}, for every $p\in M$ the set $G_p=\{g\in G:gp=p\}$ is a closed Lie subgroup of  $G$ and  the mapping
\begin{equation}
\label{eqn-pi^p}
\pi^p:
G\ni g\mapsto gp\in M
 \end{equation}
 is a locally trivial fibre bundle over $M$ with fiber $G_p$. In particular, for every $p\in M$, the map $\pi^p$ is a submersion. Moreover, by the Moore-Schlafly Theorem \cite{Moore+Schl_1980}, there exists an isometric embedding
 \begin{equation}
\label{eqn-Phi}
 \Phi:M\to\mathbb{R}^n,
 \end{equation}
  an orthogonal representation, i.e. a smooth Lie group homomorphism,
  \begin{equation}
\label{eqn-rhoG}
\rho: G \to SO(n),
 \end{equation}
   where $SO(n)$ is the   orthogonal group\footnote{Let us recall that the Lie group $SO(n)$ consists of all $n\times n$ real matrices $A$ such $ A^t A = I$ and  $\det A = 1$ and that  the associated Lie algebra $so(n)$ consists of all $n\times n$ real matrices $A$ such that $A^t = - A$.}, such that  \begin{equation}
\label{eqn-Phi2}
  \Phi(gp)=\rho(g)\Phi(p)   \mbox{ for all } p\in M \mbox{ and }g\in G.
 \end{equation}
We identify each matrix $A\in SO(n)$ with a linear transformation of $\mathbb{R}^n$ (with respect to the canonical ONB of $\mathbb{R}^n$).

Let $\{X_i: i\in I\}$ be a basis in $T_eG$ and let us denote,
  \begin{equation}
\label{eqn-A_i}
A_i=d_e\rho(X_i)\in so(n), \mbox{ for each } i \in I.
\end{equation}

Then for each $i\in I$, $A_i$ is identified with a   skew-self-adjoint linear map in $\mathbb{R}^n$.  Let us also put  $$N=\Phi(M).$$
Then, since as observed earlier  $\pi^p$ is a submersion for each $p\in M$, we infer that
  \begin{equation}
\label{eqn-A_ix}
\mbox{ linspan} \{ A_ix: i\}  
=T_xN,   \mbox{ for every } x\in N.
\end{equation}
Let us choose a smooth function $h:\mathbb{R}^n\to\mathbb{R}_+$ such that $N=h^{-1}(\{0\})$ and $h-1$ has compact support. Let us denote by  $\nu_G$ a probability measure on $G$ that is invariant with respect to the right multiplication.    Then a function $F$ defined by
  \begin{equation}
  \label{eqn-F}
F:\mathbb{R}^n \ni x\mapsto \int_Gh(\rho(g)x)\,\nu_G(dg),\qquad x\in\mathbb{R}^n
\end{equation}
has  the following properties.
\begin{trivlist}
\item[(i)]
The function $F$ is  of $C^\infty$-class,
\item[(ii)] the function  $F-1$ has compact support,
\item[(iii)]   $N=F^{-1}(\{0\})$,
\item[(iv)] the function  $F$ is $\rho$-invariant, i.e. $F(\rho(g)x)=F(x)$ for all $g\in G$ and $x\in\mathbb{R}^n$.
\end{trivlist}
 Hence, for every $i\in I$,  if $x\in\mathbb{R}^n$ and $\gamma:[0,1]\to G$ is a smooth curve  such that $\gamma(0)=e$ and $\dot\gamma(0)=X_i$,  then by the property (iv) above and the chain rule, we have
 \begin{equation*}
0=\frac{d}{dt} F(\rho(\gamma(t))x)\vert_{t=0}= d_xF(A_ix).
 \end{equation*}
 This proves the first one of the two additional properties of the function $F$,
\begin{trivlist}
\item[(v)]  for every $i\in I$ and  $x\in\mathbb{R}^n$, $\langle\nabla F(x),A_ix\rangle=0$,
\item[(vi)] for every $i\in I$ and each $x\in N$, $A_ix\in T_xN$ and   for each $x\in N$,  the set $\{A_ix: i\in I\}$ spans the tangent space $T_xN$.
\end{trivlist}
To prove the first part of (vi) it is sufficient to observe that   if $x\in N$, $i$ is fixed and $\gamma$ is as earlier, then $\rho(\gamma(t))x\in N$ for every $t\in [0,1]$ and so  $A_ix=\frac{d}{dt}\rho(\gamma(t))x\vert_{t=0} \in T_xN$. The second part of (vi) is simply \eqref{eqn-A_ix}.

  In view of \eqref{eqn-A_ix} for each  $x\in N$ we can find $i_1,\cdots,i_{\wymM}$ and a neighbourhood $U_x$ of  $x$ in  $N$ such that ,
  \begin{equation}
  \label{eqn-basis}
A_{i_1}y,\dots, A_{i_\wymM}y \mbox{ is  a basis of } T_yN \mbox{ for each } y \in U_x.
\end{equation}
By the Gram-Schmidt orthogonalization procedure we can find $C^\infty$-class functions $\alpha_{jk}:U_x\to \mathbb{R}$, $j,k=1,\cdots, \wymM$ such that for each $y\in U_x$, the vectors
  \begin{equation}
  \label{eqn-basis2}
Z_j(y)=\sum_{k=1}^{\wymM}\alpha_{jk}(y)A_{i_k}y, \; j=1,\cdots, \wymM
\end{equation}
  form an ONB of  $T_yN$. Hence, if $h_{i_ki_l}:=\sum_{j=1}^{\wymM}\alpha_{jk}\alpha_{jl}$ and $h_{kl}:=0$ for all other indices,  then
\begin{equation}\label{localdevelopment}
\xi=\sum_{j=1}^{\wymM}\langle\xi,Z_j(y)\rangle Z_j(y)=\sum_\alpha\sum_\beta h_{\alpha\beta}(y)\langle\xi,A_\alpha y\rangle A_\beta y,\qquad\xi\in T_yN,\quad y\in U_x.
\end{equation}
 This local  equality  can be extended to a global one  by employing the partition of unity argument.
 Hence we have shown that $M$ satisfies all four  assumptions \textbf{M1}-\textbf{M4}.
\end{remark}

\begin{remark}\label{cor-M3_1+2} Let us note here that the condition \eqref{ass-M3_2} is a consequence of the condition \eqref{ass-M3_1} if the normal space $(T_pM)^\perp$ is one-dimensional (which is not assumed here), e.g. if $M=\mathbb{S}^{n-1}\subset\mathbb{R}^{n}$.
\end{remark}
\begin{remark}\label{rem-cond-M}
 As we have explained in Remark \ref{rem-hom-space},  assumptions \textbf{M1}-\textbf{M4} are satisfied whenever $M$ is a compact Riemannian
homogeneous space, see the original papers   \cite{Shatah+Struwe_1998}, \cite{Helein_1991}, \cite{Freire_1996} or \cite{Moore+Schl_1980}. In particular they are satisfied when $M$ is
a sphere $\mathbb{S}^{n-1}\subset\mathbb{R}^{n}$, see  for instance \cite{Shatah_1988}.
 This can be seen as follows.   For $i,j\in\{1,\cdots,n\}$ such that $i<j$ let $A^{ij}$ be a skew-symmetric linear operator in $\mathbb{R}^n$ whose matrix in the standard basis $\{e_1,\cdots,e_n\}$ of $\mathbb{R}^n$
is  equal to $[a^{ij}_{kl}]_{k,l=1}^n$, where $  a^{ij}_{kl}=0$ unless
   \begin{equation}
   a^{ij}_{kl}=
  \begin{cases}
   1,& \mbox{ if } (k,l)=(i,j), \cr
   -1, & \mbox{ if } (k,l)=(j,i).
   \end{cases}
    \end{equation}
Let a function  $\varphi: \mathbb{R}_+\to \mathbb{R}_+$ be such that $\varphi(x)=0$ iff $x=1$ and $\varphi(x)=1$ iff $x\in [0,\frac12]\cup [2,\infty)$.
    Define then a function $F:\mathbb{R}^n\to\mathbb{R}_+$ by formula
    $$F(x)=\varphi(|x|^2), \; x\in\mathbb{R}^n. $$ Then it is easy to verify that

    \begin{eqnarray}\label{eqn-M3_1-S^n}&&\left\langle\nabla F(x),A^{ij}x\right\rangle=0, \; \mbox{ for every }x \in\mathbb{R}^n,\\
    \label{eqn--M3_2-S^n}
    &&A^{ij}p\in T_p\mathbb{S}^{n-1} \; \mbox{ if } p\in\mathbb{S}^{n-1}\\
    \nonumber\mbox{and} &&\\
  \label{eqn--M4_1-S^n}
&& \xi=\sum_{1\leq i<j\leq n}\left\langle\xi,A^{ij}p\right\rangle A^{ij}p \; \mbox{ if } p\in\mathbb{S}^{n-1}, \; \xi\in T_p\mathbb{S}^{n-1}.
    \end{eqnarray}
    In the case $n=3$ the three matrices $A^{ij}$ can be relabeled as $A_i$, $i=1,2,3$, and, with $e_i$, $i=1,2,3$ being the canonical ONB of $\mathbb{R}^3$, we have   $$A_ip=p\times e_i, \;\; p\in \mathbb{R}^3, \;i=1,2,3.$$
    Let us note that in the very special case the formula \eqref{eqn--M4_1-S^n} takes the following particularly nice form
        \begin{equation}  \label{eqn--M4_1-S^2}
    \xi=\sum_{i=1}^3 \,\left\lb \xi , p\times e_i\right\rb \, (p\times e_i), \;\; \mbox{ if } \vert p \vert=1, \, \lb \xi,p\rb=0.
    \end{equation}
\end{remark}
\begin{remark}\label{rem-cond-M4} Let us  denote by $\tilde{h}_{ij}$ a smooth compactly supported extension of the function $h_{ij}$ to the whole $\mathbb{R}^n$.  For $k\in\{1,\cdots,N\}$ let us define a   map $Y^k:\mathbb{R}^n\to \mathbb{R}^n$
\begin{equation}\label{eqn-tilde-Y^i}
\tilde Y^k(x)=\sum_{j=1}^N\tilde{h}_{kj}(x)A^jx, \; x\in \mathbb{R}^n,\; k\in\{1,\cdots,N\}.
\end{equation}
For each $k\in\{1,\cdots,N\}$ and for all $x\in \mathbb{R}^n$,  $\tilde Y^k(x) $  is a skew symmetric linear operator in  $\mathbb{R}^n$.
Let us also denote by $Y^k$ the restriction of  $\tilde Y^k$, i.e.
\begin{equation}\label{eqn-Y^i}
Y^k(p)=\sum_{j=1}^Nh_{kj}(p)A^jp, \; p\in M.
\end{equation}
In view of assumption \textbf{(M3)}, for each $p\in M$, $Y^k(p)\in T_pM$ and hence $Y^k$ can be viewed as a vector field on $M$. Moreover, the identity   \eqref{ass-M4_1} from Assumption \textbf{M4} can be equivalently expressed in terms of the  vector fields $ Y^k $ as follows
\begin{equation}
\label{ass-M4_2}
\xi=\sum_{k=1}^N \left\langle\xi,A^kp\right\rangle Y^kp,\;  p\in M,\; \xi\in T_pM,
\end{equation}
where\footnote{Note that since the embedding $i: M\embed \mathbb{R}^n$ is isometric, $\langle \cdot,\cdot \rangle$ could also be understood as the scalar product in $T_{p}M$ generated by the Riemannian metric on $M$.} $\langle \cdot,\cdot \rangle=\lb \cdot,\cdot\rb_{\mathbb{R}^n}$.
Identity \eqref{ass-M4_2} is a close reminiscence of formula (7) in \cite[Lem 2]{Helein_1991}.
\end{remark}

The following Lemma will prove most useful in the proof of the existence of a solution. To be precise in the \textit{identification} part of the proof. Let us recall that by $\mathbf{S}$  we denote the second fundamental form of the submanifold $M \subset \mathbb{R}^n$.

\begin{lemma}\label{lem-2ff} For  every $(p,\xi)\in TM$
 we have
\begin{equation}\label{eqn-2ff}
\bold S_p(\xi,\xi)=\sum_{k=1}^N\left\langle\xi,A^kp\right\rangle d_pY^k(\xi),
\end{equation}
where $d_pY^k(\xi):=d_p{\tilde Y^k}(\xi)$ and $d_p{\tilde Y^k}$ is the Fr{\`e}chet derivative of the map $\tilde Y^k$ at $p$.
\end{lemma}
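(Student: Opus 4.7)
The plan is to apply the Gauss formula to a tangential extension of $\xi$ and use the decomposition identity \eqref{ass-M4_2} to isolate the normal component.

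First, I would extend $\xi$ to a smooth tangential vector field $X$ defined on a neighbourhood of $p$ in $M$, with $X(p)=\xi$ and $X(q)\in T_qM$ for $q$ near $p$. By the identity \eqref{ass-M4_2} applied pointwise,
\[
X(q)=\sum_{k=1}^{N}\langle X(q),A^{k}q\rangle\,Y^{k}(q),\qquad q\in M\text{ near }p.
\]
Write $D_{\xi}X$ for the ambient derivative in $\mathbb{R}^{n}$, i.e.\ $D_{\xi}X=\tfrac{d}{dt}\big|_{t=0}X(\gamma(t))$ for any curve $\gamma$ in $M$ with $\gamma(0)=p$, $\gamma'(0)=\xi$. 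Differentiating the previous identity along $\xi$ and using the Leibniz rule gives
\[
D_{\xi}X=\sum_{k=1}^{N}\bigl(\langle D_{\xi}X,A^{k}p\rangle+\langle\xi,A^{k}\xi\rangle\bigr)Y^{k}(p)+\sum_{k=1}^{N}\langle\xi,A^{k}p\rangle\,d_{p}\tilde{Y}^{k}(\xi).
\]
Since each $A^{k}$ is skew-symmetric, $\langle\xi,A^{k}\xi\rangle=0$, so the bracketed expression simplifies to $\langle D_{\xi}X,A^{k}p\rangle$.

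Next I would extract the tangential part. Assumption \textbf{M3} says $A^{k}p\in T_{p}M$, so $\langle D_{\xi}X,A^{k}p\rangle=\langle(D_{\xi}X)^{T},A^{k}p\rangle$. The Gauss formula splits $D_{\xi}X=\nabla_{\xi}X+\mathbf{S}_{p}(\xi,\xi)$ with $\nabla_{\xi}X=(D_{\xi}X)^{T}\in T_{p}M$, so $\langle D_{\xi}X,A^{k}p\rangle=\langle\nabla_{\xi}X,A^{k}p\rangle$. Applying \eqref{ass-M4_2} now to the tangent vector $\nabla_{\xi}X\in T_{p}M$ yields
\[
\sum_{k=1}^{N}\langle D_{\xi}X,A^{k}p\rangle\,Y^{k}(p)=\sum_{k=1}^{N}\langle\nabla_{\xi}X,A^{k}p\rangle\,Y^{k}(p)=\nabla_{\xi}X.
\]
Substituting back and subtracting $\nabla_{\xi}X$ from both sides gives
\[
\mathbf{S}_{p}(\xi,\xi)=D_{\xi}X-\nabla_{\xi}X=\sum_{k=1}^{N}\langle\xi,A^{k}p\rangle\,d_{p}\tilde{Y}^{k}(\xi),
\]
which is the desired identity (and, as a byproduct, confirms that the RHS is automatically normal to $M$ at $p$).

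I do not expect a serious obstacle: the argument is essentially the standard Gauss-equation computation combined with the algebraic identity \eqref{ass-M4_2}. The only points to watch are (i) invoking skew-symmetry at the right moment to kill the $\langle\xi,A^{k}\xi\rangle$ term, and (ii) using $A^{k}p\in T_{p}M$ to replace $D_{\xi}X$ by its tangential component inside the coefficients, which is what allows the identity \eqref{ass-M4_2} to collapse the first sum to $\nabla_{\xi}X$. The independence of $d_{p}\tilde{Y}^{k}(\xi)$ from the choice of the ambient extension $\tilde Y^{k}$ is implicit: only the value of the expression along a curve in $M$ enters, so any smooth extension yields the same result.
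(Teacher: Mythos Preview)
Your proof is correct and follows essentially the same route as the paper's: differentiate the decomposition identity \eqref{ass-M4_2}, kill the $\langle\xi,A^{k}\xi\rangle$ term by skew-symmetry, and use $A^{k}p\in T_{p}M$ together with the Gauss formula to collapse the first sum to the tangential part $\nabla_{\xi}X$. The only cosmetic difference is that the paper carries out the computation along a curve $\gamma$ with $\dot\gamma$ playing the role of your extended field $X$, rather than extending $\xi$ to a vector field on a neighbourhood; the algebra is identical.
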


\begin{proof}[Proof of Lemma \ref{lem-2ff}]  Let us denote in this proof by $i: M\embed \mathbb{R}^n$ the natural embedding of $M$ into $\mathbb{R}^n$.
Let us take $p\in M$ and $\xi \in T_pM$. Let $I\subset \mathbb{R}$ be an open interval such that $0\in I$.
Let $\gamma:I\to M$ be a curve  such that $\gamma(0)=p$ and $\dot\gamma(0)={\big( i\circ \gamma\big)}^{\cdot}(0)=\xi$. Then by
identity \eqref{ass-M4_2} we get
\begin{equation}\label{eqn-2ff_1}
{\big( i\circ \gamma\big)}^{\cdot} (t)=\sum_{k=1}^N\left\langle {\big( i\circ \gamma\big)}^{\cdot},A^k\gamma(t)\right\rangle Y^k(\gamma(t)), \;\; t\in I.
\end{equation}
 Let us note that  by the formula \cite[(2.5)]{Brz+Ondr_2009}, see also \cite[Corollary 4.8]{ONeill_1983}, we have
\begin{equation}\label{eqn-2ff_2}
{\big( i\circ \gamma\big)}^{\cdot\cdot}(t)=\nabla_{\dot\gamma(t)}\dot\gamma(t)+\bold S_{\gamma(t)}(\dot\gamma(t),\dot\gamma(t)), \;\; t\in I.
\end{equation}
In other words, for $t\in I$, the tangential part of ${\big( i\circ \gamma\big)}^{\cdot\cdot}(t)$  is equal to $\nabla_{\dot\gamma(t)}\dot\gamma(t)$ while the normal part of $\ddot\gamma(t)$  is equal to $\mathbf S_{\gamma(t)}(\dot\gamma(t),\dot\gamma(t))$. Since $A^k\gamma(t)\in T_{\gamma(t)}M$ by part  \eqref{ass-M3_2} of Assumption \textbf{M3}, in view of identity \eqref{ass-M4_2} we infer that
\begin{equation}\label{eqn-9.11}
\sum_{k=1}^N\langle {\big( i\circ \gamma\big)}^{\cdot\cdot}(t),A^k\gamma(t)\rangle Y^k(\gamma(t))=\sum_{k=1}^N\langle\nabla_{\dot\gamma(t)}\dot\gamma(t),A^k\gamma(t)\rangle Y^k(\gamma(t))=\nabla_{\dot\gamma(t)}\dot\gamma(t), \; t\in I.
\end{equation}
On the other hand, by taking the standard $\mathbb{R}^n$-valued  derivative of \eqref{eqn-2ff_1}  we get
\begin{eqnarray}
\nonumber
{\big( i\circ \gamma\big)}^{\cdot\cdot}(t)&=&
\sum_{k=1}^N\left\langle {\big( i\circ \gamma\big)}^{\cdot\cdot}(t),A^k\gamma(t)\right\rangle Y^k(\gamma(t))
+\sum_{k=1}^N\left\langle {\big( i\circ \gamma\big)}^{\cdot}(t),A^k {\big( i\circ \gamma\big)}^{\cdot}(t)\right\rangle Y^k(\gamma(t))
\\
\label{eqn-9.12}
&+&\sum_{k=1}^N\left\langle {\big( i\circ \gamma\big)}^{\cdot\cdot}(t),A^k\gamma(t)\right\rangle (d_{\gamma(t)}Y^k)\big({\big( i\circ \gamma\big)}^{\cdot}(t)\big), \;\; t\in I,
\end{eqnarray}
where as before $\lb\cdot,\cdot \rangle$  denotes the standard inner product in $\mathbb{R}^n$.

Finally, we  note that since $A^k$ are skew-symmetric, we have that
\begin{equation}\label{eqn-9.13}
\left\langle  {\big( i\circ \gamma\big)}^{\cdot}(t),A^k {\big( i\circ \gamma\big)}^{\cdot}(t)\right\rangle Y^k(\gamma(t))=0, \; t\in I.
\end{equation}

Hence, from \eqref{eqn-2ff_2} and \eqref{eqn-9.11}, \eqref{eqn-9.12} and \eqref{eqn-9.13} we infer that
\begin{equation}\label{eqn-9.14}
\bold S_{\gamma(t)}( \dot \gamma(t), \dot \gamma(t))=\sum_{k=1}^N\left\langle {\big( i\circ \gamma\big)}^{\cdot}(t),A^k\gamma(t)\right\rangle (d_{\gamma(t)}Y^k) \big({\big( i\circ \gamma\big)}^{\cdot}(t)\big), \;\;
t\in I.
\end{equation}

Putting $t=0$ in the  equality \eqref{eqn-9.14} we get identity \eqref{eqn-2ff}. The proof is now complete.
\end{proof}

\section{Outline of the proof of the main theorem}
\label{sec:outline}

The main idea of the proof of Theorem \ref{thm-main} can be seen from the following result. The proof of this result follows from our new It\^o formula in Lemma \ref{lem-ito} and it uses the material discussed in Section \ref{sec:manifold}. The proof of the converse part can be reproduced from the proof of Lemma \ref{lem-identification}.

\begin{proposition}\label{prop-main} Assume that $M$ is a compact Riemannian homogeneous space and that the coefficients $f$ and $g$ satisfy Assumption  \ref{assum-f+g}. Suppose that a  system
\begin{equation}\label{eqn-701}
\Big(\Omega,\mathscr{F},\mathbb{F},\mathbb{P},W,(u,v)\Big)
\end{equation}
is  a weak solution of \eqref{equa1}. Assume that $A: \mathbb{R}^d\to \mathbb{R}^d$ is a skew-symmetric linear operator satisfying the condition \eqref{ass-M3_2}. Define a process $\mathbf M$ by the following formula
\begin{equation}\label{eqn-702}
\mathbf M(t):=\langle v (t),Au (t)\rangle_{\mathbb{R}^n},\; t \geq 0.
\end{equation}
Then
for every  function $\varphi\in H^1_{\textrm{comp}}$ the following equality holds almost surely
\begin{eqnarray}
\label{eqn-703}
\left\langle\varphi,\mathbf M(t)\right\rangle&=&\left\langle\varphi,\mathbf M(0)\right\rangle-\sum_{k=1}^d\left\langle\partial_{x_k}\varphi,\int_0^t\left\langle\partial_{x_k} u (s), u(s)\right\rangle_{\mathbb{R}^n}\,ds\right\rangle
\\
\nonumber
&+&\left\langle\varphi,\int_0^t\left\langle f( u(s), v(s),\nabla u(s)),A u(s)\right\rangle\,d s\right\rangle
\\
\nonumber
&+&\left\langle\varphi,\int_0^t\left\langle g( u(s), v(s),\nabla u(s)),A u(s)\right\rangle\,dW(s)\right\rangle,\qquad t\ge 0.
\end{eqnarray}
Conversely, assume that a system satisfies all the conditions of Definition  \ref{def-solution} of  a weak solution to equation  \eqref{equa1} but \eqref{sol_weak_2}. Suppose that a finite sequence $(A^i)_{i=1}^N$   of skew symmetric linear operators in  $\mathbb{R}^n$    satisfies conditions \eqref{ass-M3_1},\eqref{ass-M3_2} and \textbf{M4}.  For each $i\in\{1,\cdots,N\}$ define a process $\mathbf M^i$ by the formula \eqref{eqn-702}  with $A=A^i$. Suppose that for every  function $\varphi\in H^1_{\textrm{comp}}$ each $\mathbf M^i$  satisfies equality \eqref{eqn-703} with $A=A^i$.
Then the process $(u,v)$ satisfies the equality \eqref{sol_weak_2}.
\end{proposition}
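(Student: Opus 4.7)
For the forward direction I would apply the new Itô formula of Lemma~\ref{lem-ito} with $w = v$ and the linear map $Y(p) = Ap$. Since $u$ is $M$-valued and $M$ is compact, one may smoothly truncate $Y$ outside a large ball containing $M$ so that both $Y$ and $Y'$ become globally bounded, which is harmless because only values of $Y$ near $M$ intervene; this allows the use of Lemma~\ref{lem-ito} with $q = 1$, needed since the quadratic $\mathbf S$-terms contribute to $h_0$ only in $L^1_{\mathrm{loc}}$. Reading \eqref{sol_weak_2} and rewriting $\langle u, \Delta \varphi\rangle$ by integration by parts, one identifies $h_0 = \mathbf S_u(v,v) - \sum_k \mathbf S_u(\partial_{x_k}u, \partial_{x_k}u) + f(z, \nabla u)$ and $h_k = -\partial_{x_k}u$. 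The linearity of $Y$ gives $Y'(p) = A$ and $Y''(p) = 0$, producing the following crucial cancellations in \eqref{eqn_B-phi}: the term $b(v, Av, \varphi)$ and each $b(-\partial_{x_k}u, A\partial_{x_k}u, \varphi)$ vanish by skew-symmetry of $A$, and the two $\mathbf S$-contributions to $h_0$ vanish pointwise since $\mathbf S_u(\xi,\xi) \in (T_uM)^\perp$ while $Au \in T_uM$ by \eqref{ass-M3_2}. The surviving terms are exactly the right-hand side of \eqref{eqn-703}.

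For the converse the key ingredient is the pointwise decomposition from \textbf{M4}: for $p \in M$ and $\xi \in T_p M$, $\xi = \sum_{i,j} h_{ij}(p)\,\langle \xi, A^i p\rangle\,A^j p$. Setting $m^i(s,x) := \langle v(s,x), A^i u(s,x)\rangle_{\mathbb{R}^n}$, so that the process $\mathbf{M}^i$ of \eqref{eqn-702} is the scalar field $m^i$, and applying \textbf{M4} pointwise to $(u(s,x), v(s,x))$, one has $v(s,x) = \sum_{i,j} h_{ij}(u(s,x))\,m^i(s,x)\,A^j u(s,x)$. For every $\varphi \in \mathscr D$ this identifies $\langle v(t), \varphi\rangle$ with the $L^2$-pairing $\sum_{i,j} \langle m^i(t), \psi^{ij}_\varphi(t)\rangle$, where the test field $\psi^{ij}_\varphi(t,x) := h_{ij}(u(t,x))\,\langle A^j u(t,x), \varphi(x)\rangle_{\mathbb R^n}$ belongs to $H^1_{\mathrm{comp}}(\mathbb R^d)$.

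The strategy is then to expand each product $m^i(t)\,\psi^{ij}_\varphi(t)$ by an Itô-type product rule. The factor $m^i$ is a semimartingale whose evolution is governed by \eqref{eqn-703}, while $\psi^{ij}_\varphi$ is time-dependent only through $u$, whose evolution is the trivial kinematic equation \eqref{sol_weak_1}. The drift created by differentiating $h_{ij}(u)\,A^j u$ along $v$ is precisely what Lemma~\ref{lem-2ff} reconstructs as $\mathbf S_u(v,v)$, via $\mathbf S_p(\xi,\xi) = \sum_k \langle \xi, A^k p\rangle\,d_pY^k(\xi)$ with $Y^k = \sum_j h_{kj} A^j$. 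The $h_k = -\partial_{x_k} u$-contributions appearing in the several $\mathbf{M}^i$-equations, after the product rule and one more application of \textbf{M4} to $\partial_{x_k} u \in T_u M$, reassemble into the $\langle u, \Delta \varphi\rangle$ term together with the $-\sum_k \mathbf S_u(\partial_{x_k} u, \partial_{x_k} u)$ corrections; the $f$- and $g\,dW$-contributions collapse back, again by \textbf{M4}, to $\langle f(z,\nabla u), \varphi\rangle$ and $\langle g(z,\nabla u)\,dW, \varphi\rangle$ respectively. Summing over $i,j$ reproduces \eqref{sol_weak_2}.

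The main obstacle is to justify this Itô calculus for the time-dependent, only $H^1_{\mathrm{loc}}$-regular test field $\psi^{ij}_\varphi$: since $u$ is not a semimartingale in any classical function space but only weakly continuous, one must mollify in the spatial variable with the approximation of identity $\zeta_m$ as in the proof of Lemma~\ref{lem-ito}, apply the classical finite-dimensional Itô product formula to the resulting smooth scalar semimartingales, and then pass to the limit $m \to \infty$ using the integrability hypotheses together with the continuity of the $h_{ij}$ on the compact target $M$. The algebraic bookkeeping via \textbf{M4} and Lemma~\ref{lem-2ff} is straightforward but notationally heavy, and this is exactly the content that the authors defer to the proof of Lemma~\ref{lem-identification}.
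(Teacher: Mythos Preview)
Your proposal is correct and matches the paper's approach. The forward direction is exactly as the paper intends: apply Lemma~\ref{lem-ito} with $w=v$ and $Y(p)=Ap$ (truncated outside a ball containing $M$ so that $q=1$ applies), then use skew-symmetry of $A$ and the orthogonality $\mathbf S_u(\xi,\xi)\perp T_uM\ni Au$ to discard the unwanted terms. For the converse, the paper proceeds slightly more directly than you: rather than introducing the time-dependent test fields $\psi^{ij}_\varphi$ and re-doing a mollification argument, it applies Lemma~\ref{lem-ito} a second time, now with $w=\mathbf M^i$ (which satisfies an equation of the form~\eqref{eqn_u-phi_2} by hypothesis) and $Y=Y^i=\sum_j h_{ij}(\cdot)A^j(\cdot)$; the resulting identity is~\eqref{eqn-M^iY^i}, after which the algebraic reductions via~\eqref{ass-M4_2} and Lemma~\ref{lem-2ff} are exactly the ones you describe. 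Your route is equivalent but reproves the content of Lemma~\ref{lem-ito} in the special case; invoking the lemma directly with $Y=Y^i$ avoids that extra labor.
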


The first step of the proof of Theorem \ref{thm-main} consists of introducing a penalized and regularized stochastic wave equation (\ref{eqn-apprx_m}-\ref{eqn-apprx_m-i}), i.e.
\begin{eqnarray}
\label{eqn-704}
\partial_{tt}U^m &=&\Delta U^m-m\nabla F(U^m)+f^m(U^m,\nabla_{(t,x)} U^m)+g^m(U^m,\nabla_{(t,x)} U^m)\,dW^m
\\
\label{eqn-705}
&&(U^m(0),\partial_{t}U^m(0)) =\Theta,
\end{eqnarray}
Had we assumed that the coefficients $f$ and $g$ were sufficiently regular, we would have simply put $f^m=f$ and $g^m=g$ above. The existence of a unique global solution $Z^m=\big(U^m,V^m\big)$ to the problem (\ref{eqn-704}-\ref{eqn-705}) is more or less standard.
In Section \ref{sec:2nd-approx} by using uniform energy estimates we will show that the sequence $(Z^m)_{m\in\mathbb{N}}$ is tight on an appropriately chosen Fr\'echet space and by employing Jakubowski's generalization \cite{Jakub_1997} of the Skorokhod embedding Theorem we construct a version $z^m$ of $Z^m$ such that
converges in law to a certain process $z$. In order to prove that   $z$ is a weak solution of (\ref{equa2}) we construct processes \Magenta{$M^i_k$} defined by a formula  analogous to formula \eqref{eqn-701}, see formula \eqref{eqn-104}. We prove that the sequence $\mathbf M^i_k$ is convergent and denote the limit by $\mathbf M^i$. Moreover, we  show that $z$ takes values in the tangent bundle $TM$. The proof is concluded by constructing an appropriate Wiener process, see Lemma \ref{prop-Wiener} and showing,
by employing the argument needed to prove the converse part of Proposition \ref{prop-main} that the process $z$ is indeed a weak solution of (\ref{equa2}).
We remark, that our method of constructing weak solutions to stochastic PDEs
does not employ any martingale representation theorem (and we are not aware  of such results in the Fr\'echet spaces anyway).

\section{Preparation for the proof of the main theorem}
\label{sec:prepa}

\subsection{Approximation of coefficients} Let $(\zeta_m)_{m=1}^\infty$ be the approximation of identity introduced in Section \ref{sec:notation}. Let us assume that $J$ is a continuous vector field on $M$, $h$ a continuous real function on $M$ and $\lambda$ a continuous vector bundle homomorphisms from $TM$ to $TM$. Let $\pi:\mathbb{R}^n\to
\mathscr{L}(\mathbb{R}^n,\mathbb{R}^n)$ be a smooth compactly supported function such that for every $p\in M$,  $\pi(p)$ is the orthogonal projection from $\mathbb{R}^n$ onto $T_pM$. The vector field $J$, the function $h$ and the $\mathcal{L}(\mathbb{R}^n,\mathbb{R}^n)$-valued function $\lambda\circ\pi|_M$ (all defined on $M$) can be extended to continuous compactly supported functions, all  denoted again by the same symbols,   $J:\mathbb{R}^n\to\mathbb{R}^n$, $h:\mathbb{R}^n\to\mathbb{R}$ and $\lambda:\mathbb{R}^n\to\mathscr L(\mathbb{R}^n,\mathbb{R}^n)$. By a standard approximation argument (invoking the  convolution with functions $\zeta_m$) we can find  sequences of $C^\infty$-class functions $J^m:\mathbb{R}^n\to\mathbb{R}^n$, $h^m:\mathbb{R}^n\to\mathbb{R}$, $\lambda^m:\mathbb{R}^n\to\mathscr{L}(\mathbb{R}^n,\mathbb{R}^n)$ supported in a compact set in $\mathbb{R}^n$ such that $J^m\to J$, $h^m\to h$ and $\lambda^m\to\lambda$ uniformly on $\mathbb{R}^n$.

 When we specify the above to our given data: continuous vector fields $f_{d+1}$, $g_{d+1}$ on $M$,  continuous functions $f_0$, $g_0$ on $M$ and  continuous vector bundle homeomorphisms $f_1,\dots,f_d$, $g_1,\dots,g_d$ on $TM$, we can construct the following sequences of approximating smooth functions
\begin{equation}
\label{eqn-f^m}
f^m_0,g^m_0:\mathbb{R}^n\to\mathbb{R},\; f^m_i,g^m_i:\mathbb{R}^n\to\mathscr L(\mathbb{R}^n,\mathbb{R}^n), \; i\in\{1,\cdots,d\}\;  f^m_{d+1},g^m_{d+1}:\mathbb{R}^n\to\mathbb{R}^n,\;\; m\in\mathbb{N}, \\
 \end{equation}
 such that  for some $R_0>0$,
 \begin{equation}
 \label{eqn-supp-aprox}
\bigcup_{m\in\mathbb{N}} \bigcup_{i=0}^{d+1}\big[ \supp(f^m_i) \cup \supp(f^m_i)\big] \subset  B(0,R_0)\subset \mathbb{R}^n,
 \end{equation}
 and  the $L^\infty$ norms of $f^m_{d+1}$ and $g^m_{d+1}$ do not exceed the $L^\infty$ norms of $f_{d+1}$ and $g_{d+1}$ respectively, i.e.
\begin{equation}
\label{eqn-f^m-L^infty}
 \vert f^m_{d+1} \vert_{L^\infty(\mathbb{R}^n,\mathbb{R}^n)} \leq   \vert f_{d+1} \vert_{L^\infty(M,\mathbb{R}^n)}, \;\;
 \vert g^m_{d+1} \vert_{L^\infty(\mathbb{R}^n,\mathbb{R}^n)} \leq   \vert g_{d+1} \vert_{L^\infty(M,\mathbb{R}^n)},\; m\in\mathbb{N}.
 \end{equation}
 and,  uniformly on $\mathbb{R}^n$, as $m\to \infty$,
  \begin{equation}
\label{eqn-conv_f^m_itof_i}
 f^m_i\to f_i \mbox{ and } g^m_i\to g_i, \;\;i\in\{0,\cdots,d+1\}.
 \end{equation}

\subsection{Solutions to an approximated problem}\label{partdefsol} Let the Borel probability measure $\Theta$ on $\mathscr H_{\textrm{loc}}(M)$ be as  in Theorem \ref{thm-main}.  In the next section we will prove that for each  $m\in\mathbb{N}$ there exists  a weak solution  of the following problem
\begin{eqnarray}
\label{eqn-apprx_m}
\partial_{tt}U^m &=&\Delta U^m-m\nabla F(U^m)+f^m(U^m,\nabla_{(t,x)} U^m)+g^m(U^m,\nabla_{(t,x)} U^m)\,dW^m
\\
\label{eqn-apprx_m-i}
&&(U^m(0),\partial_{t}U^m(0)) =\Theta,
\end{eqnarray}
where, compare with \eqref{eqn-f}, the coefficients $f^m$ and $g^m$ are defined by
\begin{eqnarray}\label{eqn-710}
&&
f^m(y,w)=\sum_{i=0}^df^m_i(y)w_i+f^m_{d+1}(y),\;(y,w)\in\mathbb{R}^n\times[\mathbb{R}^n]^{d+1},\\
\label{eqn-711}
&& g^m(y,w)=\sum_{i=0}^dg^m_i(y)w_i+g^m_{d+1}(y), \;(y,w)\in\mathbb{R}^n\times[\mathbb{R}^n]^{d+1}.
\end{eqnarray}
In other words, we will show that   for every $m\in\mathbb{N}$, there exists
\begin{trivlist}
\item[(i)] a complete stochastic basis $(\Omega^m,\mathscr{F}^m,(\mathscr{F}^m_t),\mathbb{P}^m)$,
\item[(ii)]
a spatially homogeneous $(\mathscr{F}^m_t)$-Wiener process $W^m$ with spectral measure $\mu$ and
\item[(iii)]
an $(\mathscr{F}^m_t)$-adapted process $Z^m=(U^m,V^m)$ with weakly continuous paths in $\mathscr H_{\textrm{loc}}$
\end{trivlist}
 such that $\Theta$ is equal to the law of $Z^m(0)$ and for every $t\ge 0$ and $\varphi\in\mathscr D(\mathbb{R}^d,\mathbb{R}^n)$ the following equalities hold almost surely,
\begin{eqnarray}\label{approxsolem}
\langle U^m(t),\varphi\rangle_{\mathbb{R}^n}&=&\langle U^m(0),\varphi\rangle_{\mathbb{R}^n}+\int_0^t\langle V^m(s),\varphi\rangle_{\mathbb{R}^n}\,ds
\\
\langle V^m(t),\varphi\rangle_{\mathbb{R}^n}&=&\langle V^m(0),\varphi\rangle_{\mathbb{R}^n}+\int_0^t\langle -m\nabla F(U^m(s))+f^m(Z^m(s),\nabla U^m(s)),\varphi\rangle_{\mathbb{R}^n}\,ds\nonumber
\\
\label{approxsolem_2}
&+&\int_0^t\langle U^m(s),\Delta\varphi\rangle_{\mathbb{R}^n}\,ds+\int_0^t\left\langle g^m(Z^m(s),\nabla U^m(s))\,dW^m_s,\varphi\right\rangle_{\mathbb{R}^n}
\end{eqnarray}
The processes $Z^m$ need not take values in the tangent bundle $TM$ and since the diffusion nonlinearity is not Lipschitz it  only exists in the  weak  probabilistic sense.

\begin{remark} Let us point out that for each $m\in\mathbb{N}^\ast$, $Z^m(0)$ is   $\mathscr{F}^m_0$-measurable $\mathscr H_{\textrm{loc}}(M)$-valued random variables whose law is equal to $\Theta$. In particular, our initial data satisfy $U^m_0(\omega)\in M$ and $V^m_0(\omega)\in T_{U^m_0(\omega)}M$ a.e. for every $\omega\in\Omega$.
\end{remark}

\section{Tightness of the approximations}\label{sec:2nd-approx}

Lemma \ref{basal}  below constitutes  the first step towards proving Theorem \ref{thm-main2}. In its formulation we use the following generalised family of  energy functions $\mathbf e_{x,T,mF}$, where $x\in\mathbb{R}^n$,  $T>0$, $m\in \mathbb{N}$ and the constant $\mathbf s^2$ was  defined in \eqref{eqn-s^2},     compare with \eqref{eqn-e_T},
\begin{equation}\label{eqn-newenergy}
\mathbf e_{x,T,mF}(t,u,v)=\int_{B(x,T-t)}\left\{\frac 12|\nabla u|^2+\frac 12|u|^2+\frac 12|v|^2+mF(u)+\mathbf{s}^2\right\}\,dy, \;  t\in [0,T],\; (u,v)\in\mathscr H_{\textrm{loc}}.
\end{equation}

\begin{lemma}\label{basal} There exists a weak solution $(\Omega^m,\mathscr{F}^m,(\mathscr{F}^m_t),\mathbb{P}^m,Z^m=(U^m,V^m),W^m)$ to \eqref{eqn-apprx_m}-(\ref{eqn-apprx_m-i}) such that
 \begin{equation}\label{ineq_apriori_01}
\mathbb E^m\,\big[1_A(Z^m(0))\sup_{s\in [0,t]}\, L(\mathbf e_{x,T,mF}(s,Z^m(s)))\big] \leq 4e^{C t}\mathbb E^m\,\left[1_A(Z^m(0)L\left(\mathbf e_{x,T,mF}(0,Z^m(0))\right)\right]
\end{equation}
holds for every  $T\ge 0$, $t\in[0,T]$, $A\in\mathscr B(\mathscr H_{\textrm{loc}})$, $m\in\Bbb N$ whenever $L\in C[0,\infty)\cap C^2(0,\infty)$ is a nondecreasing function such that, for some $c>0$,
\begin{equation}\label{basal-growth}
tL^\prime(t)+\max\,\{0,t^2L^{\prime\prime}(t)\}\leq cL(t),\qquad t>0.
\end{equation}
The constant $C$ depends on $c$, {$c_\mu$} and on $\|f^j\|_{L^\infty}$, $\|g^j\|_{L^\infty}$, $ j=0,1,\cdots, d+1$. \end{lemma}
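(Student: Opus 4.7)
My plan is in two parts: construct a weak solution to the approximate problem, then derive the energy estimate. For the existence part, the smooth coefficients $f^m_i, g^m_i$ and $\nabla F$ are uniformly bounded and Lipschitz on $\mathbb{R}^n$ (they are $C^\infty$ with compact support). I would implement a Faedo--Galerkin scheme, establish tightness of the laws on $C_w(\mathbb{R}_+; \mathscr H_{\textrm{loc}})$ via the uniform bounds derived below, and extract a weak solution $(\Omega^m, \mathscr{F}^m, \mathbb{F}^m, \mathbb{P}^m, W^m, Z^m)$ using Jakubowski's version \cite{Jakub_1997} of the Skorokhod theorem, with initial law $\Theta$ by construction. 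Thus the substantial content is the estimate \eqref{ineq_apriori_01}, which also supplies the a priori bounds driving the existence step.

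Set $e^m(t) := \mathbf e_{x,T,mF}(t, Z^m(t))$ and $B_t := B(x, T-t)$. The core computation is It\^o's formula for $L(e^m(t))$. Substituting the dynamics \eqref{approxsolem}--\eqref{approxsolem_2} of $v = V^m$, two structural cancellations occur: (i) the penalization $-m\langle v, \nabla F(u)\rangle$ exactly cancels with $\partial_t[mF(u)] = m\langle \nabla F(u), v\rangle$; (ii) integration by parts transforms the interior term $\int_{B_t}[\langle \nabla u, \nabla v\rangle + \langle v, \Delta u\rangle]\,dy$ into a boundary flux $\int_{\partial B_t} \langle v, \partial_\nu u\rangle\,d\sigma$, which by Cauchy--Schwarz is dominated pointwise by $\tfrac12|v|^2 + \tfrac12|\nabla u|^2$ on $\partial B_t$ and hence absorbed into the non-positive boundary contribution $-\int_{\partial B_t}\{\tfrac12|\nabla u|^2 + \tfrac12|u|^2 + \tfrac12|v|^2 + mF(u) + \mathbf s^2\}\,d\sigma$ arising from the shrinking of $B_t$. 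This is the finite propagation speed mechanism: the net boundary contribution is non-positive.

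What remains is a volume drift bounded pointwise by $C_1 e^m(t)\,dt$---using Cauchy--Schwarz, the $L^\infty$ bounds on $f^m_i, g^m_i$ for $i \leq d$, the absorption of $\|f_{d+1}\|_\infty^2 + \|g_{d+1}\|_\infty^2$ into $\mathbf s^2$, and Lemma \ref{lem-hsop} for the It\^o correction $\tfrac12\|g^m(\cdot)\|^2_{\mathscr T_2}$---plus a local martingale $N^m$ with $d\langle N^m\rangle_t \leq C_2 e^m(t)^2\,dt$ by the same lemma. It\^o's formula for $L(e^m(t))$ then gives a drift bounded by $[L'(e^m)C_1 e^m + \tfrac12 L''(e^m)C_2 (e^m)^2]\,dt \leq \tilde C L(e^m(t))\,dt$ by \eqref{basal-growth}, and the new martingale $\int_0^t L'(e^m)\,dN^m$ has quadratic variation $\leq c C_2 \int_0^t L(e^m(s))^2\,ds$, again by \eqref{basal-growth}. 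Multiplying by the $\mathscr{F}^m_0$-measurable indicator $\mathbf 1_A(Z^m(0))$, localizing, and combining Burkholder--Davis--Gundy with the standard absorption trick (source of the factor $4$) and Gronwall's inequality delivers \eqref{ineq_apriori_01}.

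The main technical obstacle is to justify this It\^o expansion rigorously, since $Z^m$ is only known to be weakly continuous in $\mathscr H_{\textrm{loc}}$ and the observable involves both the time-dependent domain $B_t$ and nonlinear functionals of $\nabla U^m$. This is overcome via the mollification strategy of Lemma \ref{lem-ito}: convolve $U^m, V^m$ with the approximate identity $\zeta_k$ to obtain smooth semimartingales, replace $\mathbf 1_{B_t}$ by a smooth time-dependent cutoff, apply the classical finite-dimensional It\^o formula and the divergence theorem on the regularized level, and pass $k \to \infty$ and the cutoff limit using the integrability hypotheses together with Lemma \ref{lem-hsop} to control the noise contributions uniformly.
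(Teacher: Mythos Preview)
Your proposal is correct in substance. The paper's own proof, however, is a one-line citation: it simply invokes Theorems 5.1 and 5.2 of \cite{o5}, which supply both the existence of a weak solution to the penalized problem and the a priori energy inequality \eqref{ineq_apriori_01}, noting only that \eqref{eqn-f^m-L^infty} ensures the $L^\infty$ norms of $f^m_{d+1}$, $g^m_{d+1}$ are controlled by those of $f_{d+1}$, $g_{d+1}$.

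What you have done is to unpack the mechanism behind those cited theorems: the cancellation of the penalization term against $\partial_t[mF(u)]$, the finite-propagation-speed absorption of the boundary flux into the negative contribution from the shrinking ball, the use of \eqref{basal-growth} to close the It\^o expansion of $L(e^m)$, and the BDG--Gronwall endgame producing the factor $4$. This is exactly the argument one expects \cite{o5} to contain, and your sketch is accurate. One small point of presentation: you invoke the energy estimate to drive tightness in the existence step, but of course the estimate must first be established at the Galerkin (or other approximation) level and then shown to survive the limit; you allude to this but do not make the order of operations explicit. Otherwise the two approaches coincide --- yours is simply the paper's proof with the black box opened.
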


\begin{proof} This is a direct application of Theorem 5.1 and Theorem 5.2 in \cite{o5}. We use bound  \eqref{eqn-f^m-L^infty}  according to which the $\|f^m_{d+1}\|_{L^\infty}$ norm is bounded by $\|f_{d+1}\|_{L^\infty(M)}$.
\end{proof}

In the following Lemma we use the notions introduced in Appendices \ref{sec:space_L} and  \ref{sec:space_C}.
\begin{lemma}\label{tigh3} Assume that $\newr<\min\,\{2,\frac{d}{d-1}\}$. Then
\begin{trivlist}\item[(1)]  the sequence $\{U^m\}$ is tight on $C_w(\mathbb{R}_+;H^1_{\textrm{loc}})$,
\item[(2)] the sequence $\{V^m\}$ is tight on $\mathbb{L}=L^\infty_{\textrm{loc}}(\mathbb{R}_+;L^2_{\textrm{loc}})$,
 \item[(3)] and, for every $i\in \{1,\cdots, N\}$,  the sequence  ${\langle V^m,A^iU^m\rangle_{\mathbb{R}^d}}$ is tight on $C_w(\mathbb{R}_+;L^{\newr}_{\textrm{loc}})$.
 \end{trivlist}
\end{lemma}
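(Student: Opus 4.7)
The scaffolding for all three assertions is Lemma \ref{basal} applied with $L(s)=\sqrt{s}$ (which satisfies \eqref{basal-growth} with $c=\tfrac{1}{2}$). Since $\Theta$ is concentrated on $\mathscr H_{\textrm{loc}}(M)$ and $M\subset\{F=0\}$, the penalty $mF(U^m(0))$ vanishes identically, so the right-hand side of \eqref{ineq_apriori_01} is controlled uniformly in $m$. Combined with the tightness of $\Theta$, this yields: for every $R,T>0$ and every $\eta>0$ there exists $C=C(R,T,\eta)$ such that
$$
\sup_m\,\mathbb P\Bigl(\sup_{s\in[0,T]}\bigl(\|U^m(s)\|_{H^1(B_R)}+\|V^m(s)\|_{L^2(B_R)}\bigr)>C\Bigr)<\eta.
$$
This uniform-in-$m$ tightness of the $\mathscr H_{\textrm{loc}}$-seminorms of $Z^m=(U^m,V^m)$ is the common input.

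For (1), the estimate above yields weak pre-compactness of $\{U^m(t)\}_m$ in $H^1_{\textrm{loc}}$ at every fixed $t$ by Banach-Alaoglu. The first equation of \eqref{approxsolem} says $\partial_tU^m=V^m$ weakly, so for each $\varphi\in\mathscr D$ the pairing $t\mapsto\langle U^m(t),\varphi\rangle$ is equi-Lipschitz with constant (in probability) proportional to $\|V^m\|_{L^\infty(0,T;L^2(\textrm{supp}\,\varphi))}$; the criterion for tightness in $C_w(\mathbb R_+;H^1_{\textrm{loc}})$ from Appendix \ref{sec:space_C} then applies. For (2), the uniform $L^\infty_tL^2_{\textrm{loc}}$-bound is itself enough: closed balls in $L^\infty([0,T];L^2(B_R))$ are weak-$\ast$ compact, and a diagonal argument in $R,T\in\mathbb N$ transfers this to tightness in $\mathbb L$ as described in Appendix \ref{sec:space_L}.

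Part (3) is more delicate. Set $\mathbf M^m_i(t):=\langle V^m(t),A^iU^m(t)\rangle_{\mathbb R^n}$. By Sobolev embedding $H^1(B_R)\hookrightarrow L^p(B_R)$ (with $p=\tfrac{2d}{d-2}$ for $d\geq 3$, $p<\infty$ arbitrary for $d=2$, $p=\infty$ for $d=1$) together with H\"older's inequality,
$$
\|\mathbf M^m_i(t)\|_{L^r(B_R)}\leq C_R\|V^m(t)\|_{L^2(B_R)}\|U^m(t)\|_{H^1(B_R)},\qquad r<\min\{2,\tfrac{d}{d-1}\},
$$
so $\sup_t\|\mathbf M^m_i(t)\|_{L^r(B_R)}$ is uniformly tight in $m$. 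For time equi-continuity I apply Lemma \ref{lem-ito} with $(u,w)=(U^m,V^m)$, $v=V^m$ and $Y(x)=A^ix$, in the case $q=2$ (valid because $Y'\equiv A^i$ is bounded), reading off $h_0=-m\nabla F(U^m)+f^m(Z^m,\nabla U^m)$ and $h_k=-\partial_{x_k}U^m$ from \eqref{approxsolem_2}.

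The decisive step is a cascade of algebraic cancellations. The dangerous penalty contribution $-m\langle\nabla F(U^m),A^iU^m\rangle$ vanishes pointwise by \eqref{ass-M3_1}; the covariation-type term $\langle V^m,Y'(U^m)V^m\rangle=\langle V^m,A^iV^m\rangle$ vanishes by skew-symmetry of $A^i$; and likewise $\langle\partial_{x_k}U^m,A^i\partial_{x_k}U^m\rangle=0$, so that the second-order spatial contribution $\langle\Delta U^m,A^iU^m\rangle$ reduces to a pure divergence. Testing the resulting identity against $\varphi\in\mathscr D$, the bounded-variation part of $t\mapsto\langle\varphi,\mathbf M^m_i(t)\rangle$ is uniformly Lipschitz (with constants controlled by the energy estimate, $\|\nabla\varphi\|_{L^2}$ and $\|f^m\|_{L^\infty}$), while the It\^o integral is uniformly H\"older continuous in $t$ by Burkholder-Davis-Gundy together with the Hilbert-Schmidt bound of Lemma \ref{lem-hsop}. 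Equi-continuity of the weak pairings combined with the uniform $L^r$-bound yields the tightness via the criterion of Appendix \ref{sec:space_C}. The main obstacle throughout is that the penalty $m\nabla F(U^m)$ explodes as $m\to\infty$; assumption \eqref{ass-M3_1} is precisely what removes this term from the semimartingale decomposition of $\mathbf M^m_i$, and this is the reason for studying $\langle V^m,A^iU^m\rangle$ rather than $V^m$ itself.
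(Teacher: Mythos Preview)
Your proposal is correct and follows essentially the same route as the paper: the energy estimate of Lemma \ref{basal} with $L(s)=\sqrt{s}$ together with tightness of $\Theta$ (implemented in the paper via the indicator sets $Q_{m,k,\delta}=\{\|Z^m(0)\|_{\mathscr H_{2k}}\le\delta\}$) feeds the compactness criteria of Appendices \ref{sec:space_L} and \ref{sec:space_C}, and for part (3) the paper applies Lemma \ref{lem-ito} exactly as you describe, exploiting the same three cancellations (\eqref{ass-M3_1} for the penalty, skew-symmetry of $A^i$ for the quadratic terms). The only implementational detail you leave implicit is that the paper obtains the time regularity of the stochastic integral as $C^\gamma$ into $\mathbb W^{-1,r}_k$ via the Garsia--Rodemich--Rumsey lemma after raising to an $L^p$ moment with $p>4$ and $\gamma+\tfrac2p<\tfrac12$.
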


\begin{proof}[Prof of Claim 1]   Let us now take and fix $\varepsilon>0$. In view of   Corollary \ref{cor-dery}
 it is enough to find a sequence $\{a_k\}_{k\in\mathbb{N}}$ such that
\begin{equation}\label{eqn-001}
\mathbb{P}^m\,\left(\bigcup_{k=1}^\infty \Big\{\|U^m\|_{L^\infty((0,k);H^1(B_k))}+\|U^m\|_{C^1([0,k];L^2(B_k))}>a_k\Big\} \right)\leq \varepsilon, \;\; m\in\mathbb{N} .
\end{equation}
Let us introduce the following auxiliary notation.
$$Q_{m,k,\delta}=\{\|Z^m(0)\|_{\mathscr H_{2k}}\leq\delta\},\;\; \delta>0,\;k,m\in\mathbb{N}.$$

From the definition \eqref{eqn-newenergy} of the function $\mathbf e$ we  can find a constant $c>0$ such that
\begin{eqnarray}\label{Q_mkdelta-1}
&&\hspace{-2truecm}\lefteqn{\mathbb{E}^m\Big[ 1_{Q_{m,k,\delta}}\big[\|U^m\|_{L^\infty((0,k);H^1(B_k))}+\|U^m\|_{C^1([0,k];L^2(B_k))}\big]\Big]} \\
&\leq& c\mathbb{E}^m\,\Big[1_{Q_{m,k,\delta}}\sup_{s\in [0,k]}\,L(\mathbf e_{0,2k,m}(s,Z^m(s)))\Big],\;\;\mbox{for all $\delta>0$ and $k,m\in\mathbb{N}$}.
\nonumber
\end{eqnarray}
On the other hand, since the sequence $\{\mathbf{s}_m\}$ in (\ref{eqn-newenergy}) is bounded, by applying Lemma \ref{basal} with function $L(\cdot)=\sqrt{2\cdot}$,
we infer that for   $\delta>0$ and $k\in\mathbb{N}$
  we can find a constant $ C_{k,\delta}>0$ such that
\begin{eqnarray}\label{Q_mkdelta}
 c\mathbb{E}^m\,\Big[1_{Q_{m,k,\delta}}\sup_{s\in [0,k]}\,L(\mathbf e_{0,2k,m}(s,Z^m(s)))\Big]\leq C_{k,\delta},\;\; \mbox{ for every }m\in\mathbb{N}.
\end{eqnarray}
Let us put  $$a_k=C_{k,\delta_k}\varepsilon^{-1} 2^{k+1},\;\; k\in\mathbb{N}.$$
Then by the Tchebyshev inequality we infer from  inequalities  (\ref{Q_mkdelta-1}-\ref{Q_mkdelta}) that
\begin{equation}\label{eqn-002}
\mathbb{P}^m\,\left[ 1_{Q_{m,k,\delta_k}} \Big\{\|U^m\|_{L^\infty((0,k);H^1(B_k))}+\|U^m\|_{C^1([0,k];L^2(B_k))}>a_k\Big\} \right]\leq \varepsilon 2^{-k-1}.
\end{equation}
Since the measure $\Theta$ is Radon, for $k\in \mathbb{N}$ we can find  $\delta_k>0$  such that
$\Theta\big(\{z\in\mathscr H_{\textrm{loc}}:\|z\|_{\mathscr H_{2k}}\ge\delta_k\}\big)<\frac{\varepsilon}{2^{k+1}}$.
Hence, since the law of $Z^m$ under $\mathbb{P}^m$ is equal to $\Theta$ we infer that
\begin{equation}\label{eqn-003}
\mathbb{P}^m(Q_{m,k,\delta_k})>1-\frac{\varepsilon}{2^{k+1}},\;\; k\in\mathbb{N}.
\end{equation}
Summing up,  \eqref{eqn-001}  follows from \eqref{eqn-002}  and \eqref{eqn-003}.
\end{proof}

\begin{proof}[Prof of Claim 2] As far as the sequence $\{V^m\}$ is concerned, let us observe  that by Lemma \ref{basal},
in the same way as in inequalities (\ref{Q_mkdelta-1}-\ref{Q_mkdelta}),
$$
\mathbb{E}^m\Big[1_{Q_{m,k,\delta}}\|V^m\|_{L^\infty((0,k);L^2(B_k))}\Big]
\leq \mathbb{E}^m\Big[1_{Q_{m,k,\delta}}\sup_{s\in [0,k]}L(\mathbf e_{0,2k,m}(s,Z^m(s)))\Big]\leq C_{k,\delta}.
$$
\delr{ Hence  for each $k\in\mathbb{N}$
$$
\mathbb{P}^m\,\left\{\|V^m\|_{L^\infty((0,k);L^2(B_k))}>a_k\right\}\le\varepsilon 2^{-k}
$$}
Arguing as above, but now using Corollary \ref{compero},  we deduce that
the sequence $\{V^m\}$ is tight on $\mathbb{L}=L^\infty_{\textrm{loc}}(\mathbb{R}_+;L^2_{\textrm{loc}})$.
\end{proof}

\begin{proof}[Prof of Claim 3]
 Since the assumptions of Lemma \ref{lem-ito} (with $q=2$, $k=n$, $w=v$ and $Y(y)=A^iy$) are satisfied, by the properties of $F$ and $A^i$ listed in Section \ref{sec:manifold}   the following equality
\begin{eqnarray*}
b(V^m(t),A^iU^m(t),{\varphi})&=&b(V^m(0),A^iU^m(0),{\varphi})-\sum_{k=1}^d\int_0^t b\left(\partial_{x_k}U^m(s),A^iU^m(s),{\partial_{x_k}\varphi}\right)\,ds
\\
&+&\int_0^tb\big(f^m(Z^m(s),\nabla U^m(s)),A^iU^m(s),{\varphi}\big)\,ds
\\
&+&\int_0^tb(g^m(Z^m(s),\nabla U^m(s))\,dW^m,A^iU^m(s),{\varphi})
\end{eqnarray*}
holds almost surely for every $t\ge 0$ and $\varphi\in\mathscr D(\mathbb{R}^d)$. In particular, in view of  Appendix  \ref{sec:space_C}, for every $r>\max\,\{d,2\}$ and  $R>0$, the equality
\begin{eqnarray}
\langle V^m(t),A^iU^m(t)\rangle_{\mathbb{R}^n}&=&\langle V^m(0),A^iU^m(0)\rangle_{\mathbb{R}^n}+\sum_{k=1}^d\partial_{x_k}\left[\int_0^t\left\langle\partial_{x_k}U^m(s),A^iU^m(s)\right\rangle_{\mathbb{R}^n}\,ds\right]\nonumber
\\
&+&\int_0^t\langle f^m(Z^m(s),\nabla U^m(s)),A^iU^m(s)\rangle_{\mathbb{R}^n}\,ds\nonumber
\\
&+&\int_0^t\langle g^m(Z^m(s),\nabla U^m(s)),A^iU^m(s)\rangle_{\mathbb{R}^n}\,dW^m\label{reok}
\end{eqnarray}
holds in $\mathbb{W}^{-1,\newr}_R$  for every $t\ge 0$,  almost surely.

Indeed, by the Gagliardo-Nirenberg inequality (G-NI) and the
  H\"older inequality we get
 \begin{equation}\label{eqn-L^rstart}
\|ab\|_{L^{\newr}(\mathbb{R}^d)}\leq \|a\|_{L^2(\mathbb{R}^d)}\|b\|_{L^{2\newr/(2-\newr)}(\mathbb{R}^d)}\leq c\|a\|_{L^2(\mathbb{R}^d)}\|b\|_{H^1(\mathbb{R}^d)},\;\; a\in L^2(\mathbb{R}^d),\, b\in H^1(\mathbb{R}^d).
 \end{equation}
 Therefore,  the first deterministic integral in (\ref{reok}) converges in $L^{\newr}_{\textrm{loc}}$. But  the  the mapping $\partial_{x_k}:L^{\newr}_{\textrm{loc}}\to\mathbb{W}^{-1,\newr}_R$  is continuous for every $R>0$, and so the first term in (\ref{reok}) is a well defined $\mathbb{W}^{-1,\newr}_R$-valued random variable for each $R>0$.

 Since,  the functions $f^m_i$, $g^m_i$ are compactly supported, we can find $T>0$
and $c>0$ such that
\begin{equation}\label{zaklomez}
|\langle f^m(y,w),A^iy\rangle|+|\langle g^m(y,w),A^iy\rangle|\le c\mathbf 1_{[|y|\le T]}(1+|w|),\quad (y,w)\in\mathbb{R}^n\times[\mathbb{R}^n]^{d+1}.
\end{equation}
Therefore, the stochastic and the second deterministic integrals are convergent  in $L^2_{\textrm{loc}}$. Since $r>d$, by the G-NI
\begin{equation}\label{eqn-004}
L^2_{\textrm{loc}} \embed \mathbb{W}^{-1,\newr}_R,\;\; R>0.
 \end{equation}
 Thus  the stochastic and the second deterministic integrals  are  convergent in $\mathbb{W}^{-1,\newr}_R$, $R>0$.

Next let us choose  $p>4$ and $\gamma>0$  such that  $$\gamma+\frac2p<\frac12.$$
\delr{ Let us  denote $$Q_{m,k,\delta}=\{\|Z^m(0)\|_{\mathscr H_{2k}}\le\delta\}.$$}
Let us  denote
\begin{eqnarray}\label{eqn-I^2}
I^{(2)}(t)&=&\int_0^t\left\langle\partial_{x_l}U^m(s),A^iU^m(s)\right\rangle_{\mathbb{R}^n}\,ds,\;\; t\geq 0.
\\
\label{eqn-I^3}
I^{(3)}(t)&=&\int_0^t\langle f^m(Z^m(s),\nabla U^m(s)),A^iU^m(s)\rangle_{\mathbb{R}^n}\,ds, \;\;t\geq 0,
\\
\label{eqn-I^4}
I^{(4)}(t)&=&\int_0^t\langle g^m(Z^m(s),\nabla U^m(s)),A^iU^m(s)\rangle_{\mathbb{R}^n}\,dW^m, \;\;t\geq 0.
\end{eqnarray}
Because of \eqref{eqn-004}, by the Garsia-Rumsey-Roedemich Lemma \cite{Garsia+R+R_1970} and the  the Burkholder inequality we have
\begin{eqnarray}\label{eqn-005}
&&\hspace{-2truecm}\lefteqn{\mathbb E^m\,\Big[ 1_{Q_{m,k,\delta}}\|I^{(4)}\|^p_{C^\gamma([0,k];\mathbb W^{-1,\newr}_k)}\Big]
\leq c_{k,r}\mathbb E^m\,\|1_{Q_{m,k,\delta}}I^{(4)}\|^p_{C^\gamma([0,k],L^2(B_k))}}\\
&\leq& \tilde c_{k,r,p}\mathbb E^m \Big[1_{Q_{m,k,\delta}}\,\int_0^k\|\langle g^m(Z^m(s),\nabla U^m(s)),A^iU^m(s)\rangle_{\mathbb{R}^n}\|^p_{\mathscr T_2(H_\mu,L^2(B_k))}\,ds\Big].
\nonumber
\end{eqnarray}
Applying Lemma \ref{lem-hsop},  inequality (\ref{zaklomez}) and Lemma \ref{basal} we infer that the RHS of the above inequality  is bounded by
\begin{eqnarray*}
&\leq& \bar c_{k,r,p}\mathbb E^m\Big[ 1_{Q_{m,k,\delta}}\,\int_0^k\|\langle g^m(Z^m(s),\nabla U^m(s)),A^iU^m(s)\rangle_{\mathbb{R}^n}\|^p_{L^2(B_k)}\,ds\Big]
\\
&\leq& c^0_{k,r,p}\mathbb E^m \Big[ 1_{Q_{m,k,\delta}}\,\int_0^k \big[1+\|Z^m(s)\|^p_{\mathscr H_k}\big]\,ds\Big]
\\
&\leq& c^0_{k,r,p}\mathbb E^m\Big[ 1_{Q_{m,k,\delta}}\,\int_0^k\big[1+\mathbf e^\frac p2_{0,2k,m}(s,Z^m(s))\big]\,ds\leq C_{k,r,p,\delta}\Big].
\end{eqnarray*}
Summing up, we proved that
\begin{eqnarray}\label{eqn-006}
\mathbb E^m\,\Big[ 1_{Q_{m,k,\delta}} \|I^{(4)}\|^p_{C^\gamma([0,k];\mathbb W^{-1,\newr}_k)}\Big] &\leq& C_{k,r,p,\delta}.
\end{eqnarray}
Analogously, by the H\"older inequality and Lemma \ref{basal},
\begin{eqnarray}\nonumber
&&\hspace{-2truecm}\lefteqn{\mathbb E^m\,\Big[ 1_{Q_{m,k,\delta}} \|I^{(3)}\|^p_{C^\gamma([0,k];\mathbb W^{-1,r^\prime}_k)}\Big]
\leq c_{k,r}\mathbb E^m\,\|1_{Q_{m,k,\delta}}I^{(3)}\|^p_{C^\gamma([0,k],L^2(B_k))}}\\
\nonumber
&\leq& \tilde c_{k,r,p}\mathbb E^m \Big[ 1_{Q_{m,k,\delta}}\,\int_0^k\|\langle f^m(Z^m(s),\nabla U^m(s)),A^iU^m(s)\rangle_{\mathbb{R}^n}\|^p_{L^2(B_k)}\,ds\Big]
\\
&\leq& c^0_{k,r,p}\mathbb E^m \Big[ 1_{Q_{m,k,\delta}}\,\int_0^k\big[1+\|Z^m(s)\|^p_{\mathscr H_k}\big]\,ds\Big]
\nonumber
\\
\label{eqn-007}
&\leq& c^0_{k,r,p}\mathbb E^m \Big[1_{Q_{m,k,\delta}}\,\int_0^k\big[1+\mathbf e^\frac p2_{0,2k,m}(s,Z^m(s))\big]\,ds\Big] \leq C_{k,r,p,\delta}.
\end{eqnarray}

Concerning the process $I^{(2)}$, the H\"older inequality yields
\begin{eqnarray*}
\Vert\partial_{x_l}I^{(2)}\Vert^p_{C^\gamma([0,k],\mathbb{W}^{-1,\newr}_k)}& \leq& \Vert I^{(2)}\Vert^p_{C^\gamma([0,k],L^{\newr}(B_k))}
\\
&\leq& c_{k,p}\int_0^k\Vert\left\langle\partial_{x_l}U^m(s),A^iU^m(s)\right\rangle_{\mathbb{R}^n}\Vert^p_{L^{\newr}(B_k)}\,ds
\\
&\leq& \tilde c_{k,r,p}\int_0^k\|\nabla U^m(s)\|^p_{L^2(B_k)}\|U^m(s)\|^p_{H^1(B_k)}\,ds\\
&\leq& \tilde c_{k,r,p}\int_0^k\mathbf e^p_{0,2k,m}(s,Z^m(s))\,ds.
\end{eqnarray*}
Hence, by Lemma \ref{basal} we infer that
\begin{eqnarray}
\label{eqn-008}
&&\mathbb E^m\,\Big[ 1_{Q_{m,k,\delta}}\Vert\partial_{x_l}I^{(2)}_m\Vert^p_{C^\gamma([0,k],\mathbb{W}^{-1,\newr}_k)}\Big] \leq C_{k,p,\delta},
\end{eqnarray}

Moreover, we have
\begin{eqnarray*}
&&\hspace{-2truecm}\lefteqn{\mathbb{E}^m \Big[1_{Q_{m,k,\delta}}\sup_{s\in [0,k]}\|\langle V^m(s),A^iU^m(s)\rangle_{\mathbb{R}^n}\|^p_{\mathbb{W}^{-1,\newr}_k}\Big]}
\\
&\leq &\mathbb{E}^m \Big[1_{Q_{m,k,\delta}}\sup_{s\in [0,k]}\|\langle V^m(s),A^iU^m(s)\rangle_{\mathbb{R}^n}\|^p_{L^{\newr}(B_k)}\Big]
\\
&\leq & c_r\mathbb{E}^m\Big[1_{Q_{m,k,\delta}}\sup_{s\in [0,k]}\|V^m(s)\|^p_{L^2(B_k)}\|U^m(s)\|^p_{H^1(B_k)}\Big]
\\
&\leq &\tilde c_r\mathbb{E}^m\Big[ 1_{Q_{m,k,\delta}}\sup_{s\in [0,k]}\,\mathbf e^p_{0,2k,m}(s,Z^m(s))\Big]\leq c_{k,r,p,\delta}.
\end{eqnarray*}
Summing up, we proved that there exists a constant $C_{k,r,p,\delta}>0$ such that
\begin{eqnarray}
\label{eqn-009}
&&\hspace{-2truecm}
\lefteqn{
\mathbb E^m\,\Big\{ 1_{Q_{m,k,\delta}}\Big[ \Vert\langle V^m,A^iU^m\rangle_{\mathbb{R}^n}\Vert_{L^\infty([0,k];L^{\newr}(B_k))}
}
\\
&+&\Vert\langle V^m,A^iU^m\rangle_{\mathbb{R}^n}\Vert_{C^\gamma([0,k],\mathbb{W}^{-1,\newr}_k)}\Big]^p \Big\}\leq C_{k,\newr,p,\delta}.
\nonumber
\end{eqnarray}

Hence, by Proposition \ref{cor-dery}  we conclude that the sequence $\langle V^m,A^iU^m\rangle_{\mathbb{R}^n}$ is tight on $C_w(\mathbb{R}_+;L^{\newr}_{\textrm{loc}})$. Indeed, for any fixed  $\eps>0$  we can find a sequence
$\big(a_k\big)_{k=1}^\infty $ of positive real numbers such that $$C_{k,r,p,\delta_k}=a^p_k \eps 2^{-k-1}, \;\; k\in\mathbb{N}.$$
Then, by \eqref{eqn-009} and \eqref{eqn-003}, we infer that for each $k\in\mathbb{N}$,
\begin{equation}
\label{eqn-010}
\mathbb P^m\,\left\{\Vert\langle V^m,A^iU^m\rangle_{\mathbb{R}^n}\Vert_{L^\infty([0,k];L^{\newr}(B_k))}+\Vert\langle V^m,A^iU^m\rangle_{\mathbb{R}^n}\Vert_{C^\gamma([0,k],\mathbb{W}^{-1,\newr}_k)}>a_k\right\}\leq\varepsilon 2^{-k}.
\end{equation}
Hence, Proposition \ref{cor-dery} is applicable.

\end{proof}

\section{Proof of the main result}\label{sec:proof_main}

Let us consider the approximating sequence of processes $\big(Z^m\big)_{m\in\mathbb{N}}=\big((U^m,V^m)\big)_{m\in\mathbb{N}}$ from Lemma \ref{basal} and consider also  the following representation of Wiener processes $W^m$:
\begin{equation}\label{devel}
W_t^m=\sum_i\beta^m_i(t)e_i,\qquad t\geq 0,
\end{equation}
where $\beta=(\beta^1,\beta^2,\dots)$ are independent real standard Wiener processes and $\{e_i:i\in\mathbb{N}\}$ is an orthonormal basis in $H_\mu$, see Proposition \ref{prop_H_mu}.

Assume that $r>\max\,\{d,2\}$ is fixed. Then Lemma \ref{tigh3}, Corollary \ref{coli}, Proposition \ref{cacoli} and Corollary \ref{fiza} yield\footnote{Let us recall that we used there to denote by $\mathbb{L}$
the space $L^\infty_{\textrm{loc}}(\mathbb{R}_+;L^2_{\textrm{loc}})$} that there exists
\begin{itemize}
\item a probability space $(\Omega,\mathscr{F},\mathbb{P})$,
\item a subsequence $m_k$,
\item the following sequences of Borel measurable functions
\begin{equation}\label{eqn-102}
\mbox{
\begin{tabular}{|c| c |c|}
  \hline
$(u^k)_{k\in\mathbb{N}}$   & with values in  &  $C(\mathbb{R}_+,H^1_{\textrm{loc}})$
   \\\hline
$(v^k)_{k\in\mathbb{N}}$& \bysame &  $C(\mathbb{R}_+,L^2_{\textrm{loc}})$ \\\hline
 $(w^k)_{k\in\mathbb{N}}$  &\bysame&  $C(\mathbb{R}_+,\mathbb{R}^{\mathbb{N}})$ \\\hline
\end{tabular}
}\end{equation}

\item the following Borel random variables
\begin{equation}\label{eqn-103}\mbox{
\begin{tabular}{|c| c |c|}
  \hline
$v_0$  & with values in  &   $L^2_{\textrm{loc}}$
   \\\hline
$u$ &\bysame &  $C_w(\mathbb{R}_+;H^1_{\textrm{loc}})$  \\\hline
$\bar v$ &\bysame &  $L^\infty_{\textrm{loc}}(\mathbb{R}_+;L^2_{\textrm{loc}})$  \\\hline
$w$ &\bysame &  $C(\mathbb{R}_+,\mathbb{R}^{\mathbb{N}})$  \\\hline
$M^i$, $i=1,\cdots, N$ &\bysame &   $C_w(\mathbb{R}_+;L^{\newr}_{\textrm{loc}})$  \\\hline
\end{tabular}
}\end{equation}
\end{itemize}
such that, with the notation $z^k=(u^k,v^k)$, $k\in\mathbb{N}$ and
\begin{equation}\label{eqn-104}
M^i_k:=\langle v^k,A^iu^k\rangle_{\mathbb{R}^n},\;i=1,\cdots,N,\;k\in\mathbb{N},
\end{equation}
i.e. $M^i_k(t,\omega,x):=\langle v^k(t,\omega,x),A^iu^k(t,\omega,x)\rangle_{\mathbb{R}^n}$,
the following conditions are satisfied.
\begin{itemize}
\item[(R1)] for every $k\in\mathbb{N}$, the law of $(Z^{m_k},\beta^{m_k})$ coincides with the law of $(z^k,w^k)$ on
 $\mathscr B(C(\mathbb{R}_+,\mathscr H_{\textrm{loc}})\times C(\mathbb{R}_+,\mathbb{R}^{\mathbb{N}}))$,
\item[(R2)]  pointwise  on $\Omega$ the following  convergences hold

\begin{equation}\label{eqn-105}\mbox{
\begin{tabular}{|c| c |c|}
  \hline
    &&\\
  $u^k\to u$  & in & $C_w(\mathbb{R}_+;H^1_{\textrm{loc}})$ \\
    &&\\
    \hline
    &&\\
  $v^k\to \bar v$  & \bysame  & $L^\infty_{\textrm{loc}}(\mathbb{R}_+;L^2_{\textrm{\textrm{loc}}})$ \\
    &&\\
    \hline
    &&\\
  $v^k(0)\to v_0$ & \bysame  & $L^2_{\textrm{loc}}$ \\
    &&\\
    \hline
    &&\\
 $M^i_k \to M^i$ & \bysame  & $C_w(\mathbb{R}_+;L^{\newr}_{\textrm{loc}})$ \\
    &&\\
    \hline
    &&\\
 $w^k\to w$ & \bysame  & $C(\mathbb{R}_+,\mathbb{R}^{\mathbb{N}})$ \\
    &&\\
  \hline
\end{tabular}
}\end{equation}
 \item[(R3)]  the law of $(u(0),v_0)$ is equal to $\Theta$.
\end{itemize}
In particular, the conclusions of Lemma \ref{basal} hold for this new system of processes. This is summarized in the Proposition below.

\begin{proposition}\label{prop-basal2} If  $\rho$ is the  constant from  Lemma \ref{basal}, then the  inequality \eqref{ineq_apriori_01} holds. Thus, for any nondecreasing function $L\in C[0,\infty)\cap C^2(0,\infty)$ satisfying the condition (\ref{basal-growth}), we have
\begin{equation}\label{integ}
\mathbb{E}\,\left[\mathbf 1_A(z^k(0))\sup_{s\in[0, t]}L(\mathbf e_{x,T,m_k}(s,z^k(s)))\right]\leq 4e^{\rho t}\mathbb{E}\,\left[\mathbf 1_A(z^k(0))L\left(\mathbf e_{x,T,m_k}(0,z^k(0))\right)\right]
\end{equation}
 for every $k\in\mathbb{N}$, $t\in [0,T]$, $x\in\mathbb{R}^d$, $A\in\mathscr B(\mathscr H_{\textrm{loc}})$.
\end{proposition}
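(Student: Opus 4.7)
The inequality \eqref{integ} for the new processes $z^k$ on $(\Omega,\mathscr F,\mathbb P)$ is simply the inequality \eqref{ineq_apriori_01} of Lemma~\ref{basal} for $Z^{m_k}$ on $(\Omega^{m_k},\mathscr F^{m_k},\mathbb P^{m_k})$ transported via the equality-in-law relation (R1). The plan is to realise both sides of \eqref{integ} as expectations of a Borel measurable functional of the whole path in $C(\mathbb R_+;\mathscr H_{\mathrm{loc}})$, exchange $z^k$ for $Z^{m_k}$ using (R1), and invoke Lemma~\ref{basal}.

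\textbf{Setup and measurability.} For fixed $k$, $x$, $T$, $t\in[0,T]$, $A$ and $L$ I would define $\Psi,\Psi_0\colon C(\mathbb R_+;\mathscr H_{\mathrm{loc}})\to[0,\infty]$ by
\[
\Psi(z)=\mathbf 1_A(z(0))\sup_{s\in[0,t]}L\bigl(\mathbf e_{x,T,m_k}(s,z(s))\bigr),\quad
\Psi_0(z)=\mathbf 1_A(z(0))L\bigl(\mathbf e_{x,T,m_k}(0,z(0))\bigr).
\]
The functional $\Psi_0$ is Borel because the evaluation $z\mapsto z(0)$ is continuous from $C(\mathbb R_+;\mathscr H_{\mathrm{loc}})$ to $\mathscr H_{\mathrm{loc}}$ and $z_0\mapsto \mathbf e_{x,T,m_k}(0,z_0)$ is continuous on $\mathscr H_{\mathrm{loc}}$ (a quadratic functional of $z_0$ restricted to the fixed ball $B(x,T)$, composed with a continuous $L$). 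For $\Psi$ the key observation is that, for each continuous path $z\in C(\mathbb R_+;\mathscr H_{\mathrm{loc}})$, the scalar map $s\mapsto \mathbf e_{x,T,m_k}(s,z(s))$ is continuous on $[0,T)$: as $s\to s_0$, $z(s)\to z(s_0)$ in $\mathscr H_{\mathrm{loc}}$, while $B(x,T-s)\triangle B(x,T-s_0)$ has vanishing Lebesgue measure and the integrand is locally integrable (dominated convergence). Hence the supremum in $\Psi(z)$ coincides with the supremum over $[0,t]\cap\mathbb Q$, which is a countable operation and renders $\Psi$ Borel.

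\textbf{Conclusion via (R1).} Condition (R1) implies that the marginal laws of $z^k$ and $Z^{m_k}$ on $\mathscr B(C(\mathbb R_+;\mathscr H_{\mathrm{loc}}))$ coincide, so
\[
\mathbb E\bigl[\Psi(z^k)\bigr]=\mathbb E^{m_k}\bigl[\Psi(Z^{m_k})\bigr],\qquad
\mathbb E\bigl[\Psi_0(z^k)\bigr]=\mathbb E^{m_k}\bigl[\Psi_0(Z^{m_k})\bigr].
\]
Lemma~\ref{basal} applied to $Z^{m_k}$ with $m=m_k$ gives $\mathbb E^{m_k}[\Psi(Z^{m_k})]\le 4e^{\rho t}\mathbb E^{m_k}[\Psi_0(Z^{m_k})]$, and chaining the three relations yields \eqref{integ}. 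Should one be concerned that the expectations may be infinite, it suffices to first replace $L$ by $L\wedge N$ and pass to the limit by monotone convergence, but since Lemma~\ref{basal} is stated in $[0,\infty]$ this is not strictly necessary.

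\textbf{Main obstacle.} The only non-routine point is the joint Borel measurability of $\Psi$: the parameter $s$ enters the energy functional \emph{only} through the shrinking ball $B(x,T-s)$ and not through the integrand, so one has to argue separately that the section $s\mapsto \mathbf e_{x,T,m_k}(s,z(s))$ is continuous before the reduction to a countable supremum applies. Once this measurability is established, the passage from Lemma~\ref{basal} to \eqref{integ} is a purely formal change-of-variables in expectation based on (R1).
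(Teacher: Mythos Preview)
Your proposal is correct and matches the paper's approach: the paper does not give an explicit proof of Proposition~\ref{prop-basal2} but simply remarks that ``the conclusions of Lemma~\ref{basal} hold for this new system of processes'' via the equality-in-law statement (R1). Your write-up fills in exactly the detail the paper skips, namely the Borel measurability of the path functional $\Psi$ needed to transfer the inequality through (R1); your continuity argument for $s\mapsto\mathbf e_{x,T,m_k}(s,z(s))$ (using strong continuity of $z$ in $\mathscr H_{\mathrm{loc}}$ together with $|B(x,T-s)\triangle B(x,T-s_0)|\to 0$) is the natural one.
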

Before we continue, let us observe that compactness of $H^1_{\textrm{loc}} \embed L^2_{\textrm{loc}}$ and the properties  \eqref{eqn-103} and \eqref{eqn-105} imply the following auxiliary result.
\begin{proposition}\label{prop-102}
In the above framework, all the trajectories of the process  $u$ belong to
$C(\mathbb{R}_+,L^2_{\textrm{loc}})$ and for every $t\in \mathbb{R}_+$, $u^k(t)\to u(t)$ in $L^2_{\textrm{loc}}$.
\end{proposition}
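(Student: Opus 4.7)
The plan is to combine three ingredients: weak convergence/continuity in $H^1_{\textrm{loc}}$, the uniform boundedness principle applied to $H^1(B_R)$, and the Rellich--Kondrachov compact embedding $H^1(B_R) \hookrightarrow L^2(B_R)$.

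Fix $\omega\in\Omega$, and let us drop it from the notation. From (R2) we know $u^k\to u$ in $C_w(\mathbb{R}_+;H^1_{\textrm{loc}})$, so in particular $u$ is weakly continuous into $H^1_{\textrm{loc}}$, and for every $R>0$ and every $t\ge 0$ we have $u^k(t)\rightharpoonup u(t)$ weakly in $H^1(B_R)$.

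First I would establish strong convergence $u^k(t)\to u(t)$ in $L^2_{\textrm{loc}}$. Fix $t\ge 0$ and $R>0$. Because $u^k(t)\rightharpoonup u(t)$ weakly in $H^1(B_R)$, the sequence is weakly bounded there, hence norm-bounded by Banach--Steinhaus. The compact embedding $H^1(B_R)\hookrightarrow L^2(B_R)$ then guarantees that every subsequence of $\{u^k(t)\}$ has a further subsequence converging in $L^2(B_R)$; since the weak limit in $L^2(B_R)$ must coincide with $u(t)$, the whole sequence $u^k(t)$ converges to $u(t)$ strongly in $L^2(B_R)$. Letting $R$ vary gives convergence in $L^2_{\textrm{loc}}$.

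Next, for strong continuity of the trajectory $u$ in $L^2_{\textrm{loc}}$, fix $t\ge 0$ and a sequence $t_n\to t$. Weak continuity gives $u(t_n)\rightharpoonup u(t)$ in $H^1(B_R)$ for each $R>0$. The set $\{u(s):s\in K\}$, where $K$ is a compact interval containing the $t_n$'s and $t$, is weakly compact in $H^1(B_R)$ (as the weakly continuous image of a compact set), thus weakly bounded and, by Banach--Steinhaus, norm-bounded in $H^1(B_R)$. Applying Rellich--Kondrachov once more, the same subsequence-extraction argument as above yields $u(t_n)\to u(t)$ in $L^2(B_R)$. Varying $R$ shows $u\in C(\mathbb{R}_+;L^2_{\textrm{loc}})$.

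The only point requiring any care is the extraction of a norm bound in $H^1(B_R)$ from weak convergence or weak continuity; this is a direct application of the uniform boundedness principle on the Banach space $H^1(B_R)$, obtained by restricting the $H^1_{\textrm{loc}}$-valued objects via $\pi_R$. No recourse to the probabilistic energy estimate \eqref{integ} is needed for this proposition: the argument is purely deterministic, done $\omega$-by-$\omega$.
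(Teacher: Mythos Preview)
Your proof is correct and follows exactly the approach the paper indicates: the paper merely states that the result follows from the compactness of the embedding $H^1_{\textrm{loc}}\hookrightarrow L^2_{\textrm{loc}}$ together with \eqref{eqn-103} and \eqref{eqn-105}, and your argument is precisely the standard way to unpack that remark. You have supplied the details the paper omits, with no deviation in strategy.
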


We  also introduce a  filtration $\mathbb{F}=\big(\mathscr{F}_t\big)_{t\geq 0}$ of $\sigma$-algebras on the probability space $(\Omega,\mathscr{F},\mathbb{P})$ defined by
\begin{equation}\label{eqn-sigma-field}
\mathscr{F}_t=\sigma\{\sigma\{v_0,u(s),w(s):s\in [0,t]\}\cup\{N:\mathbb{P}\,(N)=0\}\}, \;\;\; t\geq 0.
\end{equation}

Our first result states,  roughly speaking, that the limiting process $u$ takes values in the set $M$. To be precise, we have the following.

\begin{proposition}\label{prop-set-Q_u}
There exists a set $Q_u\in\mathscr{F}$ such that $\mathbb{P}\,(Q_u)=1$ and, for every $\omega\in Q_u$ and $t\ge 0$,  $u(t,\omega)\in M$ almost
everywhere on $\mathbb{R}^d$.
\end{proposition}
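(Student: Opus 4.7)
The plan is to exploit the penalization: the term $-m_k\nabla F(U^{m_k})$ forces the approximations to concentrate on $M = F^{-1}(0)$, and the energy estimate of Proposition \ref{prop-basal2} makes this quantitative. I will apply the estimate with $L(s) = s$ (which satisfies (\ref{basal-growth}) with $c = 1$) and the set $A_R = \{z \in \mathscr{H}_{\textrm{loc}} : \|z\|_{\mathscr{H}_{2T}} \le R\}$. The key observation is that since $z^k(0)$ has law $\Theta$ supported on $\mathscr{H}_{\textrm{loc}}(M)$, we have $u^k(0,y) \in M$ for a.e. $y$ almost surely, so $F(u^k(0)) = 0$ a.e. a.s. and the contribution of $m_k F(u^k(0))$ to the initial energy vanishes. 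Consequently the right-hand side of (\ref{integ}) is bounded by a constant $C(R,T,t)$ independent of $k$, and dropping nonnegative terms on the left yields
\begin{equation*}
\mathbb{E}\Big[\mathbf 1_{A_R}(z^k(0))\int_{B(x,T-t)} m_k F(u^k(t,y))\,dy\Big] \le 4e^{\rho t} C(R,T,t).
\end{equation*}
Dividing by $m_k \to \infty$, using that $F$ is bounded (since $F - 1$ has compact support by \textbf{M2}), and noting that $\Theta(A_R^c) \to 0$ as $R \to \infty$ since $\Theta$ is a probability measure, I remove the indicator and conclude $\mathbb{E}\big[\int_{B(x,T-t)} F(u^k(t,y))\,dy\big] \to 0$.

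Next I pass to the limit. By Proposition \ref{prop-102}, $u^k(t) \to u(t)$ in $L^2_{\textrm{loc}}$ pointwise on $\Omega$; convergence in $L^2$ gives convergence in measure on each ball, and since $F$ is continuous and bounded, the standard subsequence principle combined with dominated convergence yields $\int_{B(x,T-t)} F(u^k(t,\omega,y))\,dy \to \int_{B(x,T-t)} F(u(t,\omega,y))\,dy$ pointwise in $\omega$. A further application of bounded convergence in $\omega$ gives $\mathbb{E}\big[\int_{B(x,T-t)} F(u(t,y))\,dy\big] = 0$. Since $F \ge 0$ and $F^{-1}(0) = M$, it follows that for each fixed $t \ge 0$ there is a set $\Omega_t$ of full measure on which $u(t,\omega,y) \in M$ for a.e. $y$.

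To construct a single exceptional set $Q_u$, I set $Q_u = \bigcap_{t \in \mathbb{Q}_+} \Omega_t$, which has full probability. For an arbitrary $t \ge 0$ and $\omega \in Q_u$, choose rationals $t_n \to t$; by Proposition \ref{prop-102}, $u(t_n,\omega) \to u(t,\omega)$ in $L^2_{\textrm{loc}}$, so along a subsequence $u(t_{n_j},\omega,y) \to u(t,\omega,y)$ for a.e. $y$. For each $n$ the set where $u(t_n,\omega,y) \notin M$ is null, hence the countable union is null; outside this union (and the negligible set where the subsequence fails to converge) we have $u(t_{n_j},\omega,y) \in M$ for every $j$, so by closedness of $M \subset \mathbb{R}^n$ the limit lies in $M$. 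Therefore $u(t,\omega,\cdot) \in M$ almost everywhere.

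The delicate step is the uniform-in-$k$ bound in the first paragraph: without the crucial fact that $\Theta$ is supported on $\mathscr{H}_{\textrm{loc}}(M)$ (so that $m_k F(u^k(0)) = 0$ a.s. a.e.), the right-hand side of the energy inequality would blow up with $m_k$ and the argument would fail; once this is secured, the remaining steps are routine Fatou/dominated-convergence manipulations combined with the continuity $u \in C(\mathbb{R}_+; L^2_{\textrm{loc}})$ from Proposition \ref{prop-102}.
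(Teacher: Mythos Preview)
Your proof is correct and follows essentially the same strategy as the paper's: exploit the energy inequality (\ref{integ}) with $L(s)=s$ and a cutoff on the initial data, use that $\Theta$ is supported on $\mathscr H_{\textrm{loc}}(M)$ so $F(u^k(0))=0$ a.e.\ a.s.\ (making the right-hand side independent of $m_k$), and then pass to the limit using Proposition~\ref{prop-102} together with the density of $\mathbb{Q}_+$ and closedness of $M$. The only stylistic difference is that you remove the indicator first via boundedness of $F$ and then apply bounded convergence, whereas the paper keeps the indicator, applies Fatou's lemma (with the careful $<\delta$ versus $\le\delta$ distinction), and lets $\delta\to\infty$ afterwards; both routes are valid. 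One minor imprecision: assumption \textbf{M2} says $F$ is constant outside a large ball, not literally that $F-1$ has compact support, but this still yields the boundedness of $F$ that you need.
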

\begin{proof} Let us fix $T>0$ and $\delta >0$. In view of the definition \eqref{eqn-newenergy} of the function $\mathbf e_{0,T,m}$, the inequality (\ref{integ})
yields that for some finite constant $ C_{T,\delta}$,
\begin{equation}\label{eqn-108}
\mathbb{E}\,\Big[1_{\big\{\|z^k(0)\|_{\mathscr H_{T}}\leq \delta\big\}}\int_{B_{T-t}}m_kF(u^k(t))\,dx\Big]\leq C_{T,\delta},\qquad t\in[0,T].
\end{equation}

Since $\|z^k(0)\|_{\mathscr H_T}\to\|(u(0),v_0)\|_{\mathscr H_T}$ and by Proposition \ref{prop-102} for every $t\in [0,T]$,  $u^k(t)\to u(t)$ in $L^2(B_{T-t})$, by applying the Fatou Lemma we infer that
\begin{equation}\label{eqn-109}
\mathbb{E}\,\Big[1_{\big\{\|(u(0),v_0)\|_{\mathscr H_{T}}<\delta\big\}}\int_{B_{T-t}}F(u(t))\,dx\Big]\leq \liminf_{k\to\infty}
\mathbb{E}\,\Big[1_{\big\{\|z^k(0)\|_{\mathscr H_{T}}\leq \delta\big\}}\int_{B_{T-t}}F(u^k(t))\,dx\Big].
\end{equation}
On the other hand, by \eqref{eqn-108}, since $m_k\toup\infty$,

\begin{equation}\label{eqn-110}
\liminf_{k\to\infty}
\mathbb{E}\,\Big[1_{\big\{\|z^k(0)\|_{\mathscr H_{T}}\leq \delta\big\}}\int_{B_{T-t}}F(u^k(t))\,dx\Big]=0
\end{equation}
and so
\begin{equation}\label{eqn-111}
\mathbb{E}\,\Big[1_{\big\{\|(u(0),v_0)\|_{\mathscr H_{T}}<\delta\big\}}\int_{B_{T-t}}F(u(t))\,dx\Big]=0.
\end{equation}

 Taking  the limits as $\delta\toup \infty$ and $T\toup \infty$ we get that
\begin{equation}\label{eqn-F=0}
\mathbb{E}\,\int_{\mathbb{R}^d}F(u(t))\,dx=0,\qquad t\ge 0.
\end{equation}
Since $F\geq 0$ and   $M=F^{-1}(\{0\})$ we infer that for each $t\in\mathbb{R}_+$, $u(t,x)\in M$ for $\mathrm{Leb}$ a.a. $x\in \mathbb{R}^d$, $\mathbb{P}$-almost surely.  Hence there exists a set $\Omega_u$ of full measure such that
$$ \mathrm{Leb} \big(\{x \in \mathbb{R}^d:u(q,\omega,x)\notin M\}\big) =0 \mbox{ for every  } q\in \mathbb{Q}_+ \mbox{ and every }\omega\in\Omega_u.$$

Let us take $t\in\mathbb{R}_+$,  $\omega\in\Omega_\ast$. Obviously  we can find a sequence $(q_n)_{n=1}^\infty\subset \mathbb{Q}_+$ such that  $q_n\to t$. Hence, by Proposition \ref{prop-102}  there exists a subsequence $(n_k)_{k=1}^\infty$ such that for $\mathrm{Leb}$-almost every $x$, $u(q_{n_k},\omega,x)\to u(t,\omega,x)$ as $k\to \infty$.
Hence by closedness of the set $M$ we infer that also
$u(t,\omega,x)\in M$ for $\mathrm{Leb}$-almost every $x$. The proof is complete.
\end{proof}

The last result  suggests the following definition.
\begin{definition}\label{def-bfu} Set
\begin{equation}\label{eqn-bfu}
\mathbf{u}(t,\omega)=\begin{cases} u(t,\omega), & \mbox{ for } t\ge 0 \mbox{ and }\omega\in Q_u,\cr
 \mathbf{p}, & \mbox{ for } t\ge 0 \mbox{ and } \omega\in\Omega\setminus Q_u.
 \end{cases}
\end{equation}
 where $\mathbf{p}(x)=p$, $x\in\mathbb{R}^d$ for some fixed (but otherwise arbitrary) point  $p\in M$.
\end{definition}

Let $\bar v$ be the $\mathbb{L}$-valued random variable as in \eqref{eqn-103} and \eqref{eqn-105}. In view of Proposition \ref{prop-wis} there exits a a measurable $L^2_{\textrm{loc}}$-valued process $v$  such that for every $\omega\in\Omega$, the function $v(\cdot,\omega)$ is a
representative of $\bar v(\omega)$.

\begin{lemma}\label{thick-V}
There exists an $\mathbb{F}$-progressively measurable $L^2_{\textrm{loc}}$-valued process $\mathbf V$  such that $\Leb\otimes\mathbb{P}$-a.e., $\mathbf V=v$   and,  $\mathbb{P}$-almost surely,
\begin{equation}\label{feq}
\mathbf{u}(t)=\mathbf u(0)+\int_0^t\mathbf V(s)\,ds, \; \mbox{ in } L^2_{\textrm{loc}},\; \mbox{ for all } t\ge 0.
\end{equation}
 Moreover $\mathbf V(t,\omega)\in T_{\mathbf u(t,\omega)}M$,  $\mathrm{Leb}$-a.e. for every $(t,\omega)\in\mathbb{R}_+\times\Omega$. Finally, there exists an $\mathscr{F}_0$-measurable $L^2_{\textrm{loc}}$-valued random variable $\mathbf v_0$  such that
 \begin{equation}\label{eqn-115}
\mathbf v_0=v_0,  \;\;\mathbb{P} \mbox{ almost surely}
\end{equation}
 and,
  for every $\omega\in\Omega$,
   \begin{equation}\label{eqn-116}
\mathbf v_0(\omega)\in T_{\mathbf u(0,\omega)}M,  \mathrm{Leb}-\mbox{a.e.}.
 \end{equation}
\end{lemma}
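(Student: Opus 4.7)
The plan is to construct $\mathbf V$ as a pointwise Lebesgue--Bochner derivative of the adapted continuous process $\mathbf u$, and to establish tangency at every $(t,\omega)$ by a secant-vector argument. First I would pass to the limit $k\to\infty$ in (\ref{approxsolem}): with $u^k(t)\to u(t)$ in $L^2_{\textrm{loc}}$ for every $t$ (Proposition \ref{prop-102}) and $v^k\to\bar v$ in $L^\infty_{\textrm{loc}}(\mathbb R_+;L^2_{\textrm{loc}})$, testing against $\varphi$ in a countable dense subset of $\mathscr D$ gives, $\mathbb P$-almost surely and for every $t\ge 0$,
\begin{equation*}
\langle u(t),\varphi\rangle=\langle u(0),\varphi\rangle+\int_0^t\langle v(s),\varphi\rangle\,ds.
\end{equation*}
The energy bound of Proposition \ref{prop-basal2} controls $\|v(s)\|_{L^2(B_R)}$ locally uniformly in $s$, so the Bochner integral $\int_0^t v(s)\,ds$ exists in $L^2_{\textrm{loc}}$ and the identity upgrades to $\mathbf u(t)=\mathbf u(0)+\int_0^t v(s)\,ds$ in $L^2_{\textrm{loc}}$ for every $t$.

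Since $\mathbf u$ is $\mathbb F$-adapted (the filtration contains $\mathbb P$-null sets, so the modification on $\Omega\setminus Q_u$ causes no measurability issue) and continuous into $L^2_{\textrm{loc}}$, define
\begin{equation*}
\mathbf V(t,\omega):=\lim_{h\downarrow 0}\tfrac1h\bigl(\mathbf u(t+h,\omega)-\mathbf u(t,\omega)\bigr)\quad\text{in }L^2_{\textrm{loc}}
\end{equation*}
on the measurable set where the limit exists, and $\mathbf V(t,\omega):=0$ elsewhere. This process is progressively measurable as a pointwise limit of progressively measurable difference quotients. Lebesgue differentiation of the Bochner integral applied $\omega$-wise yields $\mathbf V=v$ $\Leb\otimes\mathbb P$-a.e., whence $\int_0^t\mathbf V(s)\,ds=\int_0^t v(s)\,ds$ and (\ref{feq}) follows.

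For tangency at every $(t,\omega)$: on $\Omega\setminus Q_u$ one has $\mathbf u\equiv\mathbf p$ and $\mathbf V\equiv 0\in T_pM$. On $Q_u$, fix $(t,\omega)$ where the defining limit exists and extract $h_k\downarrow 0$ along which, by a diagonal sub-subsequence on balls $B_j$, both $\mathbf u(t+h_k,\omega,x)\to\mathbf u(t,\omega,x)$ and $h_k^{-1}(\mathbf u(t+h_k,\omega,x)-\mathbf u(t,\omega,x))\to\mathbf V(t,\omega,x)$ hold $\Leb$-a.e.\ in $x$. Removing the countable union of the $\Leb$-null sets on which $\mathbf u(t+h_k,\omega,\cdot)\notin M$ or $\mathbf u(t,\omega,\cdot)\notin M$ (Proposition \ref{prop-set-Q_u}), we are left with a full-measure set of $x$ at which the secants are chords of the $C^\infty$-submanifold $M\subset\mathbb R^n$ joining points that converge to $\mathbf u(t,\omega,x)$; their limit therefore lies in $T_{\mathbf u(t,\omega,x)}M$. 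Finally, since $(u(0),v_0)$ has law $\Theta$ supported in $\mathscr H_{\textrm{loc}}(M)$, the event $\Omega_0:=\{v_0\in T_{u(0)}M\text{ Leb-a.e.}\}$ is $\mathscr F_0$-measurable with full probability, and I set $\mathbf v_0:=v_0\mathbf 1_{\Omega_0}$, which is tangent at every $\omega$.

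The main obstacle will be the \emph{for every} $(t,\omega)$ requirement for tangency, as opposed to a mere $\Leb\otimes\mathbb P$-a.e.\ statement. A bare Radon--Nikodym density of $\mathbf u$ is only defined a.e.\ and has no intrinsic geometric content, whereas the secant-quotient definition of $\mathbf V$ directly produces tangent vectors wherever the limit exists and vanishes (trivially tangent) elsewhere; getting the simultaneous pointwise-in-$x$ convergence of both secants and endpoints on a conull set inside $M$ is what the diagonal extraction is designed to deliver.
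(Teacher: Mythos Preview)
Your overall architecture matches the paper's: pass to the limit in the approximate integral identity, then recover $\mathbf V$ as a difference-quotient derivative of the continuous $L^2_{\textrm{loc}}$-valued process $\mathbf u$. But there is one genuine gap.

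\textbf{Forward difference quotients are not progressively measurable.} You set $\mathbf V(t,\omega)=\lim_{h\downarrow 0}h^{-1}(\mathbf u(t+h,\omega)-\mathbf u(t,\omega))$ and claim that ``this process is progressively measurable as a pointwise limit of progressively measurable difference quotients.'' The approximants are \emph{not} progressively measurable: for $s\in[0,t]$ the quotient involves $\mathbf u(s+h)$, and in particular $\mathbf u(t+h)$, which is only $\mathscr F_{t+h}$-measurable. Hence the restriction of your quotient to $[0,t]\times\Omega$ is not $\mathscr B([0,t])\otimes\mathscr F_t$-measurable, and there is no reason the pointwise limit should be either. This is precisely why the paper uses \emph{backward} quotients
\[
q(t,\omega)=\lim_{j\to\infty}j\bigl(\mathbf u(t,\omega)-\mathbf u((t-\tfrac1j)^+,\omega)\bigr),
\]
which depend only on $\mathbf u$ at times $\le t$; since $\mathbf u$ is adapted and continuous (hence progressive), these backward quotients are progressively measurable, and so is their limit. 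Your Lebesgue-differentiation step and everything else survives this change intact.

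\textbf{On tangency you take a different---and in one respect cleaner---route.} The paper proves tangency via a smooth map $H:\mathbb R^n\to\mathbb R^n$ with $H|_M=\mathrm{id}$ and $H'(p)\xi=\xi\iff\xi\in T_pM$ (Hamilton \cite{Ham_1975}), computing $H(\mathbf u(t))-H(\mathbf u(0))=\int_0^t H'(\mathbf u)v\,ds$ and comparing with $\int_0^t v\,ds$ to get $v=H'(\mathbf u)v$ on a $\mathrm{Leb}\otimes\mathbb P$-conegligible set. Your secant argument---extract pointwise-a.e.\ convergent subsequences and use that limits of chords of a $C^1$ submanifold lie in the tangent space---is more elementary and, once you switch to backward quotients, delivers $\mathbf V(t,\omega)\in T_{\mathbf u(t,\omega)}M$ $\mathrm{Leb}$-a.e.\ for \emph{every} $(t,\omega)$ directly (the limit is tangent where it exists; where it does not, $\mathbf V=0$ is trivially tangent). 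That is exactly the strengthened form the statement asks for.

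A minor point on $\mathbf v_0$: your definition $\mathbf v_0=v_0\mathbf 1_{\Omega_0}$ fails on $\Omega_0\setminus Q_u$, where $\mathbf u(0,\omega)=\mathbf p$ but $v_0(\omega)$ need not lie in $T_pM$. Since $Q_u$ is $\mathscr F_0$-measurable (its complement is $\mathbb P$-null and $\mathscr F_0$ contains null sets), set $\mathbf v_0=v_0\mathbf 1_{\Omega_0\cap Q_u}$ instead.
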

\begin{proof}
Let us fix $t>0$. Since the map
$$
C(\mathbb{R}_+,H^1_{\textrm{loc}})\times C(\mathbb{R}_+,L^2_{\textrm{loc}}) \ni (u,v)\mapsto u(t)-u(0)-\int_0^tv(s)\,ds \in  L^2_{\textrm{loc}}
$$
is continuous,  by identity \eqref{approxsolem} and (R1) we infer that
$$
u^k(t)=u^k(0)+\int_0^tv^k(s)\,ds,\qquad t\ge 0
$$
almost surely. Here we used the following simple rule. If $R(X)=0$ a.s., $R$ is a Borel mapping and $X$ has the same law as $Y$ then  $R(Y)=0$ a.s. Hence, if $\varphi\in L^2_{\textrm{comp}}$
then $\mathbb{P}$-almost surely
\begin{eqnarray}\label{eqn-117}
&&\hspace{-2truecm}\lefteqn{
\langle\varphi,u(t)\rangle-\langle\varphi,u(0)\rangle-\int_0^t\langle\varphi,v(s)\rangle\,ds
}\\
&=&\lim_{k\to\infty}\left[\langle\varphi,u^k(t)\rangle-\langle\varphi,u^k(0)\rangle-\int_0^t\langle\varphi,v^k(s)\rangle\,ds\right]=0.
\nonumber
\end{eqnarray}
Let us define and $L^2_{\textrm{loc}}$-valued process $q$ by
$$
q(t,\omega)=\begin{cases} \lim_{j\to\infty}j\big[u(t,\omega)-u\big((t-\frac1j )^+,\omega\big)\big], & \mbox{if the $L^2_{\textrm{loc}}$-limit exists}
\cr
0& \mbox{ otherwise,}
\end{cases}
$$
where for $x\in\mathbb{R}$, by $x^+$ we denote the positive part of $x$.

  Then $q$ is an $L^2_{\textrm{loc}}$-valued $\mathbb{F}$-progressively measurable  and by \eqref{eqn-117},
  $$q=v \,\,\mathrm{Leb}\otimes\bar{\mathbb{P}}\mbox{-almost everywhere.}$$
  In particular, there exists a $\mathbb{P}$-conegligible  set $N\subset \Omega$ such that $q(\cdot,\omega)=v(\cdot,\omega)$ a.e. for every $\omega\in N$. Hence all  the paths of the process $1_{N}q$ belong to the space $L^\infty_{\textrm{loc}}(\mathbb{R}_+;L^2_{\textrm{loc}})$. Hence \eqref{feq} follows provided we define the process $\mathbf{V}$ to be equal $1_{N}q$.

Concerning the 2nd and the 3rd issue, let us observe that  by   \cite{Ham_1975} p. 108, there exists a smooth compactly supported function $H:\mathbb{R}^n\to\mathbb{R}^n$ such that, for every $p\in M$, $H(p)=p$ and

\begin{eqnarray}\label{eqn-Ham}
H^\prime(p)\xi&=&\xi \iff  \xi\in T_pM,
\end{eqnarray}
where $H^\prime(p)=d_pH\in \mathcal{L}(\mathbb{R}^n,\mathbb{R}^n)$ is the Fr\'echet derivative of $H$ at $p$.
Since by \eqref{eqn-104}, $\mathbb{P}$-almost surely, the following identity is satisfied in $L^2_{\textrm{loc}}$, for every $ t\ge 0$,
$$
\int_0^tH^\prime(\mathbf u(s))v(s)\,ds=H(\mathbf u(t))-H(\mathbf u(0))=\mathbf u(t)-\mathbf u(0)=\int_0^tv(s)\,ds,
$$
  we may conclude that $v=H^\prime(\mathbf u)v$ for $\mathrm{Leb}\otimes\mathbb{P}$-almost every $(t,\omega)$.
   Hence,  $\mathbf{V}=H^\prime(\mathbf u)(\mathbf{V})$ on a $\mathbb{F}$-progressively measurable and $\mathrm{Leb}\otimes\mathbb{P}$-conegligible set. This, in view of \eqref{eqn-Ham}, implies equality \eqref{eqn-116}.

 Finally, in order to prove \eqref{eqn-103}, let us observe that  since the following map  $$L^2_{\textrm{loc}}\times L^2_{\textrm{loc}}\to L^2_{\textrm{loc}}:(u,v)\mapsto H^\prime(u)v-v$$ is continuous,
$$
H^\prime(u^k(0))v^k(0)=v^k(0), \;\mbox{ for every } k\in\mathbb{N}, \;\mbox{almost surely}.
$$
 Therefore, almost surely,
$$
H^\prime(\mathbf u(0))v_0=v_0.
$$
\end{proof}

\begin{lemma}\label{verin}
There exists a $\mathbb{P}$-conegligible set $Q\in\mathscr{F}$ such that the following properties are satisfied.
\begin{trivlist}\item[(i)]
For every $i\in\{1,\cdots,N\}$ the process $\mathbf M^i$ defined by
\begin{equation}\label{eqn-M}
\mathbf M^i=1_QM^i
\end{equation}
 is $L^2_{\textrm{loc}}$-valued $\mathbb{F}$-adapted, with  weakly continuous paths.
 \item[(ii)]
   The following three identities hold for every $\omega\in Q$,
\begin{eqnarray}\label{eqn-M^i=VA^i}
\mathbf M^i(t,\omega)&=&\left\langle\mathbf V(t,\omega),A^i\mathbf u(t,\omega)\right\rangle_{\mathbb{R}^n}\quad \mbox{ for a.e. }t\geq 0
\\
\label{eqn-v_0}
\mathbf v_0(\omega)&=&\sum_{i,j=1}^Nh_{ij}(\mathbf u(0,\omega))\mathbf M^i(0,\omega)A^j\mathbf u(0,\omega)
\\
\label{eqn-v=V}
\mathbf V(t,\omega)&=&\sum_{i,j=1}^Nh_{ij}(\mathbf u(t,\omega))\mathbf M^i(t,\omega)A^j\mathbf u(t,\omega), \;\; \mbox{ for a.e. } t\geq 0
\end{eqnarray}
\end{trivlist}
Moreover, if the process $\mathbf{v}$ is defined by
\begin{equation}
\label{vsolu}
\mathbf{v}(t,\omega):=\sum_{i,j=1}^Nh_{ij}(\mathbf u(t,\omega))\mathbf M^i(t,\omega)A^j\mathbf u(t,\omega), \;\; \omega\in \Omega, t\geq 0,
\end{equation}
then for every $\omega\in Q$,
\begin{eqnarray}
\label{vsolu3}
\mathbf{v}(t,\omega)&=&\mathbf V(t,\omega), \mbox{ for a.e. } t\geq 0,\\
\label{vsolu2}
\mathbf{v}(t,\omega)&\in&T_{\mathbf u(t,\omega)M},\qquad t\ge 0.
\end{eqnarray}

Finally, with $\mathbf{z}=(\mathbf{u},\mathbf{v})$, for every $\omega\in Q$,   for almost every $t\ge 0$,
\begin{eqnarray}
\label{reinf}
\lim_{k\to\infty}\left\langle f^{m_k}(z^k(t,\omega)),A^iu^k(t,\omega)\right\rangle_{\mathbb{R}^n}  = \left\langle f(\mathbf{z}(t,\omega),\nabla \mathbf{u}(t,\omega)),A^i\mathbf{u}(t,\omega)\right\rangle_{\mathbb{R}^n},
\\
\label{reing}
\lim_{k\to\infty}\left\langle g^{m_k}(z^k(t,\omega)),A^iu^k(t,\omega)\right\rangle_{\mathbb{R}^n}  = \left\langle g(\mathbf{z}(t,\omega),\nabla \mathbf{u}(t,\omega)),A^i\mathbf{u}(t,\omega)\right\rangle_{\mathbb{R}^n},
\end{eqnarray}
where the limits are with respect to the weak topology on  $L^2_{\textrm{loc}}$.
\end{lemma}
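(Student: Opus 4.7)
The core of the lemma is to identify the weak limit $M^i$ with the pointwise inner product $\langle\mathbf V,A^i\mathbf u\rangle$. I would first test the convergence $M^i_k\to M^i$ in $C_w(\mathbb R_+;L^r_{\textrm{loc}})$ against a countable dense family of space--time test functions $\varphi\in C_c^\infty(\mathbb R_+\times\mathbb R^d)$: passage to the limit in $\iint M^i_k\,\varphi\,dx\,dt$ is immediate from this convergence together with the uniform bounds on $M^i_k$ provided by Lemma \ref{tigh3} and dominated convergence in $t$. On the other hand, rewriting
$$\iint\langle v^k,A^iu^k\rangle\,\varphi\,dx\,dt=\int\langle v^k(t),\,A^iu^k(t)\,\varphi(t,\cdot)\rangle_{L^2}\,dt,$$
I would use that $A^iu^k(t,\omega)\varphi(t,\cdot)\to A^i\mathbf u(t,\omega)\varphi(t,\cdot)$ strongly in $L^2_{\textrm{loc}}$ for each $t$ (Proposition \ref{prop-102}) and is uniformly bounded on any time interval (from the energy estimate of Proposition \ref{prop-basal2}), while $v^k\to v$ in $\mathbb L$. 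Matching the two limits yields a $\mathbb P$-conegligible set $Q\subset Q_u$ on which \eqref{eqn-M^i=VA^i} holds for a.e.\ $t$, simultaneously for all indices $i=1,\ldots,N$.

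Given \eqref{eqn-M^i=VA^i}, the structural identities \eqref{eqn-v_0} and \eqref{eqn-v=V} follow directly from Assumption \textbf{M4}: applying the pointwise identity \eqref{ass-M4_1} with $p=\mathbf u(t,\omega)(x)\in M$ and $\xi=\mathbf V(t,\omega)(x)\in T_pM$, which is legitimate by Proposition \ref{prop-set-Q_u} and Lemma \ref{thick-V}, and substituting $\langle\mathbf V,A^i\mathbf u\rangle=M^i$ reproduces \eqref{eqn-v=V}; the analogous argument at $t=0$, using $\mathbf v_0\in T_{\mathbf u(0)}M$ from \eqref{eqn-116}, gives \eqref{eqn-v_0}. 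The process $\mathbf M^i=1_QM^i$ inherits weak path continuity from $M^i$, and $\mathbb F$-adaptedness is a consequence of the definition \eqref{eqn-sigma-field} of the filtration together with Step 1. Defining $\mathbf v$ by \eqref{vsolu}, the equality $\mathbf v=\mathbf V$ in \eqref{vsolu3} is then just \eqref{eqn-v=V}, and $\mathbf v(t,\omega)(x)\in T_{\mathbf u(t,\omega)(x)}M$ pointwise follows because each $A^j\mathbf u(t,\omega)(x)$ is itself tangent at $\mathbf u(t,\omega)(x)\in M$ by condition \eqref{ass-M3_2}, and tangent spaces are linear.

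For the weak $L^2_{\textrm{loc}}$ convergences \eqref{reinf} and \eqref{reing}, I would decompose $f^{m_k}(z^k,\nabla u^k)$ according to \eqref{eqn-710}. The four ingredients I would use are: (i) $f^{m_k}_j\to f_j$ uniformly on $\mathbb R^n$ with a uniform $L^\infty$ bound and common compact support \eqref{eqn-supp-aprox}; (ii) $u^k(t,\omega)\to\mathbf u(t,\omega)$ strongly in $L^p_{\textrm{loc}}$ for every admissible $p<\infty$ (Proposition \ref{prop-102} combined with the uniform $H^1_{\textrm{loc}}$ bound and Sobolev embedding); (iii) $\partial_{x_\ell}u^k(t,\omega)\rightharpoonup\partial_{x_\ell}\mathbf u(t,\omega)$ weakly in $L^2_{\textrm{loc}}$; (iv) $A^iu^k(t,\omega)\to A^i\mathbf u(t,\omega)$ strongly in $L^2_{\textrm{loc}}$. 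Items (i)--(ii) combine via dominated convergence to yield $f^{m_k}_j(u^k(t,\omega))\to f_j(\mathbf u(t,\omega))$ strongly in $L^q_{\textrm{loc}}$ for every $q<\infty$; multiplied with the weakly convergent $\partial_{x_\ell}u^k$ or $v^k$ and then paired pointwise with the strongly convergent $A^iu^k$, each summand converges weakly in $L^2_{\textrm{loc}}$ to its formal limit. The argument for $g^{m_k}$ is structurally identical.

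The main difficulty lies in Step 1: $v^k$ converges only in the weak$^{*}$-type topology of $\mathbb L=L^\infty_{\textrm{loc}}(\mathbb R_+;L^2_{\textrm{loc}})$, so pointwise-in-$t$ evaluation of the product $\langle v^k(t),A^iu^k(t)\rangle$ is not immediately available, and identification must proceed distributionally in space--time. The set $Q$ must be chosen as a single $\mathbb P$-full event that handles all countable test functions and all indices $i$ at once; beyond this, the remainder of the argument is essentially careful bookkeeping of weak and strong modes of convergence.
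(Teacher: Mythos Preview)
Your overall strategy is close to the paper's, but there is a real gap in your treatment of claim (i): you assert that $\mathbf M^i=1_QM^i$ ``inherits weak path continuity from $M^i$,'' yet $M^i$ is only known a priori to take values in $L^{r'}_{\textrm{loc}}$ (with $r'<2$) and to have weakly continuous paths there; the lemma asks for $L^2_{\textrm{loc}}$-valuedness and weak continuity in $L^2_{\textrm{loc}}$. Your space--time distributional identification yields $M^i(t,\omega)=\langle\mathbf V(t,\omega),A^i\mathbf u(t,\omega)\rangle$ only for \emph{a.e.}\ $t$, which does not by itself put $M^i(t,\omega)$ into $L^2_{\textrm{loc}}$ for \emph{every} $t$, nor does it upgrade the path continuity to the $L^2_{\textrm{loc}}$ weak topology. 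The paper handles this by a pointwise-in-$(t,\omega)$ argument: from boundedness of $\sup_k\|v^k(\omega)\|_{C([0,R];L^2(B_R))}$ one extracts, for each fixed $(t,\omega)$, a subsequence $v^{k_j}(t,\omega)\rightharpoonup\theta(t,\omega)$ weakly in $L^2_{\textrm{loc}}$, and combines this with $u^{k_j}(t,\omega)\to u(t,\omega)$ strongly in $L^2_{\textrm{loc}}$ to obtain $M^i(t,\omega)=\langle A^iu(t,\omega),\theta(t,\omega)\rangle\in L^2_{\textrm{loc}}$ for \emph{every} $(t,\omega)$, together with the uniform-in-$t$ $L^2(B_R)$ bound \eqref{redi}. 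With this in hand, Lemma \ref{lem-an-4} upgrades weak continuity from $L^{r'}_{\textrm{loc}}$ to $L^2_{\textrm{loc}}$.

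A related issue is \eqref{eqn-v_0}: your ``analogous argument at $t=0$'' is not available directly, since $t=0$ may be excluded from the a.e.\ set where \eqref{eqn-M^i=VA^i} holds. The paper treats $t=0$ separately, using the explicit convergence $v^k(0)\to v_0$ in $L^2_{\textrm{loc}}$ to get $M^i(0)=\langle v_0,A^iu(0)\rangle$ on all of $\Omega$, and then invokes \textbf{M4}. Similarly, your adaptedness argument needs this: the paper establishes $\mathscr F_t$-measurability of $M^i(t)$ for a.e.\ $t$ via \eqref{eqn-M^i=VA^i}, for $t=0$ via the explicit formula, and then passes to all $t$ by weak continuity of $\langle\varphi,M^i\rangle$. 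Your sketch of the convergences \eqref{reinf}--\eqref{reing} is essentially correct and matches the paper; the paper singles out the $v^k$-term \eqref{conv-00} as the delicate one and handles it via the already-established convergence $M^i_k\to M^i$ rather than directly through $v^k$.
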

\begin{proof}
Since by \eqref{eqn-105} $v^k\to {\bar v}$ in $L^\infty_{\textrm{loc}}(\mathbb{R}_+;L^2_{\textrm{loc}})$ on $\Omega$, we infer that for every $R>0$ and $\omega\in\Omega$ the sequence
$\|v^k(\omega)\|_{L^\infty((0,R),L^2(B_R))}$, $ k\in \mathbb{N}$
is  bounded. Since by \eqref{eqn-102}, $v^k$ is a continuous $L^2_{\rm{loc}}$-valued process, $\|v^k(\omega)\|_{L^\infty((0,R),L^2(B_R))}=\|v^k(\omega)\|_{C([0,R],L^2(B_R))}$
and hence  also the following sequence
\begin{equation}\label{eqn-10.28}
\|v^k(\omega)\|_{C([0,R],L^2(B_R))}, \;\; k\in \mathbb{N}
\end{equation}
is  bounded.

Let us now fix $t\ge 0$ and $\omega\in\Omega$. Then by  \eqref{eqn-10.28}
for every $R>0$ the sequence $\|v^k(t,\omega)\|_{L^2(B_R)}$ is bounded. Hence, by employing the diagonalization procedure,  we can find an element  $\theta(t,\omega)\in L^2_{\textrm{loc}}$ and a subsequence $(k_j)_{j}$, depending on $t$ and $\omega$, such that \begin{equation}\label{eqn-10.29}v^{k_j}(t,\omega)\to\theta(t,\omega) \mbox{ weakly in }L^2_{\textrm{loc}}.
\end{equation}
Since by Proposition \ref{prop-102}
$u^{k_j}(t,\omega)\to u(t,\omega)$  strongly in $L^2_{\textrm{loc}}$,
  we infer that $\langle A^iu^{k_j}(t,\omega),v^{k_j}(t,\omega)\rangle_{\mathbb{R}^n}$ converges to $\langle A^iu(t,\omega),\theta(t,\omega)\rangle_{\mathbb{R}^n}$ in the sense of distributions and hence
 for any $\varphi\in\mathscr D$
\begin{eqnarray}\label{eqn-10.31}
\langle M^i(t,\omega),\varphi\rangle&=&\lim_{j\to\infty} \langle M^i_{k_j}(t,\omega),\varphi\rangle
=\lim_{j\to\infty}\langle\langle A^iu^{k_j}(t,\omega),v^{k_j}(t,\omega)\rangle_{\mathbb{R}^n},\varphi\rangle
\nonumber\\
&=&\langle\langle A^iu(t,\omega),\theta(t,\omega)\rangle_{\mathbb{R}^n},\varphi\rangle,
\end{eqnarray}
where the 1$^{\rm st}$ identity above follows from \eqref{eqn-105}$_4$, the 2$^{\rm nd}$  follows from \eqref{eqn-104}
the 3$^{\rm rd}$  follows from  \eqref{eqn-10.29} and Since by Proposition \ref{prop-102}. Summarizing,  we proved that

\begin{equation}\label{eqn-M^i}
M^i(t,\omega)=\langle A^iu(t,\omega),\theta(t,\omega)\rangle_{\mathbb{R}^n},\qquad (t,\omega)\in\mathbb{R}_+\times\Omega,
\end{equation}

Let $Q:=Q_u$   be the event  introduced in Proposition \ref{prop-set-Q_u} and let  $\textbf{u}$ be  the process  introduced in Definition
\ref{def-bfu}. Let us assume that $\omega \in Q$. Then, since $\mathbf u$ is an $M$-valued process and hence  uniformly bounded, it follows from \eqref{eqn-M^i} that   for every $R>0$ we have
\begin{eqnarray}\label{redi}
\sup_{t\in [0,R]}\Vert M^i(t,\omega)\Vert_{L^2(B_R)}&=&\sup_{t\in [0,R]}\Vert\langle\theta(t,\omega),A^i\mathbf u(t,\omega)\rangle_{\mathbb{R}^n}\Vert_{L^2(B_R)}\nonumber
\\
&\leq &c\sup_{t\in [0,R]}\Vert\theta(t,\omega)\Vert_{L^2(B_R)}\nonumber
\\
&\leq &c\sup_{k\ge 0}\Vert v^k(\omega)\Vert_{C([0,R];L^2(B_R))}
<\infty.
\end{eqnarray}
So we conclude that the process $\mathbf{M}_i:=1_{Q_u}M^i$ takes values in the space $L^2_{\textrm{loc}}$. Consequently, in view of Lemma \ref{lem-an-4},  as $M^i\in C_w(\mathbb{R}_+;L^{\newr}_{\textrm{loc}})$,
and  by (\ref{redi}), $\mathbf{M}_i$ has weakly continuous paths in $L^2_{\textrm{loc}}$.
In this way the proof of one part of claim (i) is complete. Later on we will deal with the adaptiveness of the process $1_{Q_u}M^i$.

In the next part of the proof we shall deal with \eqref{eqn-M^i=VA^i}. For this aim  let us  observe that for any $\varphi\in\mathscr D$ and $(t,\omega)\in\mathbb{R}_+\times\Omega$, by \eqref{eqn-105}$_{1,2}$ and Proposition \ref{prop-wis} we infer that
\begin{eqnarray*}
&&\hspace{-2truecm}\lefteqn{\int_0^t\left\langle\varphi,M^i(s)\right\rangle\,ds=\lim_{k\to\infty}\int_0^t\left\langle\varphi,\left\langle v^k(s),A^iu^k(s)\right\rangle_{\mathbb{R}^n}\right\rangle\,ds}
\\
&=&\lim_{k\to\infty}\int_0^t\left\langle\varphi,\left\langle v^k(s),A^iu(s)\right\rangle_{\mathbb{R}^n}\right\rangle\,ds
=\int_0^t\left\langle\varphi,\left\langle v(s),A^iu(s)\right\rangle_{\mathbb{R}^n}\right\rangle\,ds.
\end{eqnarray*}
 Hence we infer that for every $\omega\in\Omega$
\begin{equation}\label{ider2}
\left\langle v(t,\omega),A^iu(t,\omega)\right\rangle_{\mathbb{R}^n}=M^i(t,\omega) \mbox{ for almost every }t\ge 0
\end{equation}
and hence \eqref{eqn-M^i=VA^i} follows.

What concerns the proofs of \eqref{reinf} and \eqref{reing} let us observe that we only need to prove the former one as the proof
of the latter is identical. Moreover,  in view of formulae  \eqref{eqn-710},  \eqref{eqn-f} and \eqref{ider2} we need to deal with with
the following  three limits, weakly  in  $L^2_{\textrm{loc}}$, on $\mathbb{R}_+\times \Omega$.

\begin{equation}\label{conv-00}
\lim_{k\to\infty}\left\langle f^{m_k}_0(u^k)v^k,A^iu^k\right\rangle_{\mathbb{R}^n} =f_0(u)M^i,
\end{equation}

\begin{equation}\label{conv-01}
\lim_{k\to\infty}\left\langle f^{m_k}_l(u^k)\partial_{x_l} u^k,A^iu^k\right\rangle_{\mathbb{R}^n}=\left\langle f_l(u)\partial_{x_l}u,A^iu\right\rangle_{\mathbb{R}^n},
\end{equation}

\begin{equation}\label{conv-0d+1}
\lim_{k\to\infty}\left\langle f^{m_k}_{d+1}(u^k),A^iu^k\right\rangle_{\mathbb{R}^n}=\left\langle f_{d+1}(u),A^iu\right\rangle_{\mathbb{R}^n}.
\end{equation}
The last of these three follows easily from  Proposition \ref{prop-102} (according to which for every $t\in \mathbb{R}_+$ and every $R>0$  $u^k(t)\to u(t)$ in $L^2(B_{R})$) and the convergence \eqref{eqn-conv_f^m_itof_i}.
The proofs of middle ones are more complex but can be done in a similar (but simpler) way to the proof of the first (which we present below).

To prove \eqref{conv-00} let us choose $R>0$ such that \eqref{eqn-supp-aprox} holds, in particular
$\bigcup_{m\in\mathbb{N}}  \supp(f^m_0)  \subset  B(0,R)$.  Since $f_0^m \to f_0$ uniformly $f_0^m(u)M^i\in L^2_{\textrm{loc}}$ by \eqref{redi}, by \eqref{eqn-105}$_4$ and
 $$|f_0^{m_k}(u^k)\langle v^k,A^iu^k\rangle_{\mathbb{R}^n}|\leq c_R\mathbf 1_{B_R}(u^k)|v^k|_{\mathbb{R}^n},$$
by the Lebesgue dominated Theorem  we infer  that for every  $\varphi\in L^2_{\textrm{comp}}$
\begin{equation}\label{eqn-10.033}
\deld{\int_{\mathbb{R}^d}\varphi f_0^{m_k}(u^k) \langle v^k,A^iu^k \rangle_{\mathbb{R}^n}\,dx
=} \lim_{k\to\infty}\int_{\mathbb{R}^d}\varphi f_0^{m_k}(u^k) M^i_k \,dx
= \int_{\mathbb{R}^d}\varphi f_0(u)M^i\,dx \;\mbox{ on } \mathbb{R}_+\times\Omega.
\end{equation}
what proves \eqref{conv-00}.

 As mentioned earlier, this  proves  \eqref{reinf}.

To prove that the process $\mathbf{M}^i$ is adapted let us first notice that, by (\ref{ider2}) and Lemma \ref{thick-V}, for almost every $t\ge 0$,  $M^i(t)=\left\langle\mathbf V(t),A^iu(t)\right\rangle_{\mathbb{R}^n}$ almost surely, hence $M^i(t):\Omega\to L^{\newr}_{\textrm{loc}}$ is $\mathscr{F}_t$-measurable for almost every $t\ge 0$. Also, since $\langle v_0,A^iu(0)\rangle=M^i(0)$ on $\Omega$, the random variable $M^i(0):\Omega\to L^{\newr}_{\textrm{loc}}$ is $\mathscr{F}_0$-measurable.
Now, if $\varphi\in L^r_{\textrm{comp}}$  then $\langle\varphi,M^i\rangle$ is continuous hence $M^i$ is
$\mathbb{F}$-adapted in $L^{\newr}_{\textrm{loc}}$.

Finally we will prove the 2nd and 3rd identities in claim (ii). For this aim let $H$ be the function introduced in the proof of Lemma \ref{thick-V}. Then by (\ref{vsolu}) for every $q\in\mathbb{Q}_+$,
$$H^\prime(\mathbf u(q,\omega))\bold v(q,\omega)=\bold v(q,\omega) \mbox{  almost surely}.
  $$
  Since both the left hand side and  the right hand side of the last equality are weakly continuous in $L^2_{\textrm{loc}}$, the proof of both identities  \eqref{eqn-v_0} and \eqref{eqn-v=V}  is complete. In conclusion, the proof of Lemma \ref{verin} is finished.
\end{proof}

The proof of the following lemma will be given jointly with the proof of Lemma \ref{lem-essee-1}.

\begin{lemma}\label{lem-essee-2} The processes $(w_l)_{l=1}^\infty$ are independent real $\mathbb{F}$-Wiener processes.
\end{lemma}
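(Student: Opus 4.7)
The plan is to transfer the Wiener and independence properties from the approximating processes $\beta^{m_k}$ to $w^k$ via the equality of laws (R1), and then pass to the limit using the pointwise convergences in (R2). The argument splits naturally into two parts: (a) showing that $(w_l)_{l=1}^\infty$ is a family of independent real standard Brownian motions on $(\Omega,\mathscr{F},\mathbb{P})$; and (b) showing that the relevant increments are independent of the filtration $\mathbb{F}$ defined in \eqref{eqn-sigma-field}.

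For part (a), observe that for every $k$ the components $\beta^{m_k}=(\beta^{m_k}_1,\beta^{m_k}_2,\dots)$ are, by construction, independent real standard Wiener processes, so the law of $\beta^{m_k}$ on $C(\mathbb{R}_+,\mathbb{R}^{\mathbb{N}})$ is the standard infinite-product Wiener measure, independently of $k$. By (R1), the same is true of $w^k$. Since $w^k\to w$ pointwise on $\Omega$ in $C(\mathbb{R}_+,\mathbb{R}^{\mathbb{N}})$ (see \eqref{eqn-105}), convergence in distribution yields that $w$ has the standard Wiener measure as its law, and hence $(w_l)_{l=1}^\infty$ is a family of independent standard Brownian motions.

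For part (b), the starting point is the fact that on the original probability space $W^{m_k}$, and therefore each component $\beta^{m_k}_l$, is a Wiener process with respect to the filtration $\mathbb{F}^{m_k}$ of the weak solution $Z^{m_k}$. In particular, for any $0\le s<t$, the increment $\beta^{m_k}(t)-\beta^{m_k}(s)$ is independent of $\sigma(Z^{m_k}(r),\beta^{m_k}(r):r\le s)$. By (R1), the analogous independence holds on the new space: $w^k(t)-w^k(s)$ is independent of $\sigma(z^k(r),w^k(r):r\le s)$. I would next fix a bounded continuous function $\phi$ depending on finitely many increments of the form $w^k_l(t_j)-w^k_l(s)$ (with $s<t_j\le t$) and a cylindrical test function
\[
\psi_k=\tilde\psi\bigl(\langle v^k(0),\eta\rangle,\{\langle u^k(r_i),\varphi_i\rangle\}_i,\{w^k_{l_j}(r_j)\}_j\bigr),
\]
with $r_i,r_j\le s$, $\eta,\varphi_i\in\mathscr D(\mathbb{R}^d)$ and $\tilde\psi$ bounded continuous on $\mathbb{R}^N$. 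Factorisation at level $k$ gives $\mathbb{E}[\phi_k\psi_k]=\mathbb{E}[\phi_k]\,\mathbb{E}[\psi_k]$. The convergences in \eqref{eqn-105}, namely $u^k\to u$ in $C_w(\mathbb{R}_+;H^1_{\textrm{loc}})$, $v^k(0)\to v_0$ in $L^2_{\textrm{loc}}$ and $w^k\to w$ in $C(\mathbb{R}_+,\mathbb{R}^{\mathbb{N}})$, imply pointwise convergence on $\Omega$ of all arguments of $\tilde\psi$ and of $\phi_k$ to their limiting analogues built from $(v_0,u,w)$. Dominated convergence then transfers the factorisation to the limit, yielding independence of $w(t)-w(s)$ from every event generated by such $\psi$.

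Finally, since the cylindrical functionals $u\mapsto\langle u(r),\varphi\rangle$ with $\varphi\in\mathscr D$ are Borel with respect to the weak topology on $H^1_{\textrm{loc}}$ (they are even continuous), the $\pi$-system of events of the form $\{\psi\in B\}$ generates $\mathscr{F}_s$ up to $\mathbb{P}$-null sets. A standard monotone class / Dynkin argument therefore extends the independence to all of $\mathscr{F}_s$. Together with the Brownian motion property from part (a), this identifies $(w_l)_{l=1}^\infty$ as independent real $\mathbb{F}$-Wiener processes. The main obstacle is ensuring that the chosen test functions on the $u^k$ variables are continuous in the weak $H^1_{\textrm{loc}}$ topology (in which we only have weak convergence) while still forming a $\pi$-system rich enough to generate $\mathscr{F}_s$; this is precisely what the cylindrical functionals $\langle u(r),\varphi\rangle$ with $\varphi\in\mathscr D$ provide.
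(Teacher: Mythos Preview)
Your proposal is correct but follows a somewhat different route from the paper. The paper proves Lemma~\ref{lem-essee-2} jointly with Lemma~\ref{lem-essee-1} via L\'evy's martingale characterisation: using the same cylindrical test random variables $a_k,a$ (built from $\langle\varphi_{i_1},v^k(0)\rangle$, $\langle\varphi_{i_2},u^k(s_{i_3})\rangle$, $w^k(s_{i_4})$ and their limits), it passes to the limit in the identities $\mathbb{E}\,a_k[w^k_l(t)-w^k_l(s)]=0$ and $\mathbb{E}\,a_k[w^k_l(t)w^k_j(t)-t\delta_{lj}-w^k_l(s)w^k_j(s)+s\delta_{lj}]=0$, deducing $\mathbb{E}[w_l(t)\mid\mathscr{F}_s]=w_l(s)$ and $\mathbb{E}[w_l(t)w_j(t)-t\delta_{lj}\mid\mathscr{F}_s]=w_l(s)w_j(s)-s\delta_{lj}$, and then invokes L\'evy's theorem. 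Your argument instead establishes the Brownian law of $w$ directly from (R1) and convergence in distribution, and handles the filtration separately by a factorisation/monotone-class argument on the independence of increments from the past. Both approaches rely on the same convergences \eqref{eqn-105} and the same $\pi$-system of cylindrical functionals generating $\mathscr{F}_s$; the paper's version is more economical because it reuses verbatim the test functions and limit procedure already set up for the quadratic and cross variations in Lemma~\ref{lem-essee-1}, whereas yours is more self-contained and avoids the appeal to L\'evy's characterisation.
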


To formulate the next result let us  define a distribution-valued process $P^i$ by formula
\begin{eqnarray}\label{eqn-P^i}
P^i(t)&=&\mathbf M^i(t)-\mathbf M^i(0)-\sum_{j=1}^d\partial_{x_j}\left[\int_0^t\left\langle\partial_{x_j}\mathbf u(\tau),A^i\mathbf u(\tau)\right\rangle_{\mathbb{R}^n}\,d\tau\right]
\\
&-&\int_0^t\left\langle f(\mathbf u(\tau),\mathbf V(\tau),\nabla\mathbf u(\tau)),A^i\mathbf u(\tau)\right\rangle_{\mathbb{R}^n}\,d\tau,\qquad t\geq 0\nonumber.
\end{eqnarray}
Since the integrals in \eqref{eqn-P^i} are convergent in $L^2_{\textrm{loc}}$, the process $P^i$ takes values in $\mathbb{W}^{-1,2}_R$ for every $R>0$, see Appendix \ref{sec:space_C}.
Moreover, we have the following result.

\begin{lemma}\label{lem-essee-1}
For any  $\varphi\in W^{1,\oldr}_{\textrm{comp}}$ the process $\langle\varphi,P^i\rangle$ is an $\mathbb{F}$-martingale and its  quadratic and cross variations satisfy respectively the following
\begin{eqnarray}
\label{eqn-P^i-qv}
\langle\langle\varphi,P^i\rangle\rangle&=&\int_0^\cdot\Vert[\langle g(\mathbf u(s),\mathbf V(s),\nabla\mathbf u(s)),A^i\mathbf u(s)\rangle_{\mathbb{R}^n}]^*\varphi\Vert^2_{H_\mu}\,ds,
\\
\label{eqn-P^i-cv}
\langle\langle\varphi,P^i\rangle,w_l\rangle &=&\int_0^\cdot\left\langle[\langle g(\mathbf u(s),\mathbf V(s),\nabla\mathbf u(s)),A^i\mathbf, u(s)\rangle_{\mathbb{R}^n}]^*\varphi,e_l\right\rangle_{H_\mu}\,ds
\end{eqnarray}

where $g^*\varphi$ denotes the only element in $H_\mu$ such that $\langle g\xi,\varphi\rangle=\langle\xi,g^*\varphi\rangle_{H_\mu}$, $\forall\xi\in H_\mu$.
\end{lemma}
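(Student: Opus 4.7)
The plan is to derive both lemmas by passing to the limit in the corresponding statements at the level of the approximating sequence $(z^k,w^k)$. First I would apply Lemma \ref{lem-ito} with $q=2$, $w=v^k$ and $Y(y)=A^iy$ to the approximated equations \eqref{approxsolem}--\eqref{approxsolem_2}; the penalisation contribution $-m_k\nabla F(u^k)$ disappears thanks to the orthogonality $\langle\nabla F(x),A^ix\rangle_{\mathbb{R}^n}=0$ from \eqref{ass-M3_1}, while the two $A^i$-diagonal terms vanish by skew-symmetry of $A^i$. Setting
\[
P^i_k(t):=M^i_k(t)-M^i_k(0)-\sum_{j=1}^d\partial_{x_j}\!\!\int_0^t\!\!\langle\partial_{x_j}u^k,A^iu^k\rangle_{\mathbb{R}^n}\,d\tau-\int_0^t\!\!\langle f^{m_k}(z^k,\nabla u^k),A^iu^k\rangle_{\mathbb{R}^n}\,d\tau,
\]
and inserting the expansion \eqref{devel}, I arrive at
\[
\langle\varphi,P^i_k(t)\rangle=\sum_l\int_0^t\bigl\langle[\langle g^{m_k}(z^k,\nabla u^k),A^iu^k\rangle_{\mathbb{R}^n}]^{*}\varphi,\,e_l\bigr\rangle_{H_\mu}\,dw^k_l(s),\qquad \varphi\in W^{1,r}_{\textrm{comp}}.
\]
This identifies $\langle\varphi,P^i_k\rangle$ as a martingale in the natural filtration of $(z^k,w^k)$, with quadratic variation $\int_0^t\|[\langle g^{m_k},A^iu^k\rangle]^{*}\varphi\|^2_{H_\mu}\,ds$ and cross variation with $w^k_l$ equal to $\int_0^t\langle[\langle g^{m_k},A^iu^k\rangle]^{*}\varphi,e_l\rangle_{H_\mu}\,ds$, i.e., the prelimit versions of \eqref{eqn-P^i-qv}--\eqref{eqn-P^i-cv}.

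For Lemma \ref{lem-essee-2}, by (R1) each $w^k$ has the law of $\beta^{m_k}$, so $(w^k_l)_l$ is a family of independent real standard Wiener processes in its natural filtration; since $(u^k,v^k)$ is adapted to that filtration, $w^k_l(t)-w^k_l(s)$ is independent of $\sigma(v^k(0),u^k(r),w^k(r):r\leq s)$ for every $s<t$. Fixing bounded continuous cylindrical test functions $\Psi_1,\Psi_2$ and passing to the limit $k\to\infty$ using the pointwise convergences in (R2) together with bounded convergence, I obtain
\[
\mathbb{E}\bigl[\Psi_1(w_l(t)-w_l(s))\,\Psi_2(v_0,u|_{[0,s]},w|_{[0,s]})\bigr]=\mathbb{E}\bigl[\Psi_1(w_l(t)-w_l(s))\bigr]\mathbb{E}\bigl[\Psi_2(v_0,u|_{[0,s]},w|_{[0,s]})\bigr];
\]
a monotone class argument together with the definition \eqref{eqn-sigma-field} of $\mathbb{F}$ then yields independence of $w_l(t)-w_l(s)$ from $\mathscr{F}_s$, and joint independence of the $(w_l)_l$ is handled analogously.

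To complete Lemma \ref{lem-essee-1}, I would show that $\langle\varphi,P^i_k(t)\rangle\to\langle\varphi,P^i(t)\rangle$ in probability for each fixed $t\geq 0$: the pairing with $M^i_k$ at times $0$ and $t$ converges thanks to \eqref{eqn-105} and the fact that $u^k(0)$ is $M$-valued (hence uniformly bounded) together with the $L^2_{\textrm{loc}}$-convergence $v^k(0)\to v_0$; the $\partial_{x_j}$-term is controlled by combining the weak $L^2_{\textrm{loc}}$-convergence of $\partial_{x_j}u^k$ with the strong $L^2_{\textrm{loc}}$-convergence $u^k\to\mathbf u$ of Proposition \ref{prop-102}, integrated in time by Fubini; the drift, and the integrand of the quadratic variation, are handled by \eqref{reinf}--\eqref{reing}, the uniform bounds \eqref{eqn-supp-aprox}--\eqref{eqn-f^m-L^infty}, and Lemma \ref{lem-hsop}. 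The uniform $L^p$ integrability ($p>2$) required to pass expectations to the limit is extracted from Proposition \ref{prop-basal2} with $L(t)=t^p$, first restricting to $\{\|z^k(0)\|_{\mathscr H_T}\leq\delta\}$ (on which $\mathbf e_{x,T,m_k}(0,z^k(0))$ is bounded because $F(u^k(0))=0$) and then letting $\delta\uparrow\infty$. The prelimit martingale relation $\mathbb{E}[\langle\varphi,P^i_k(t)-P^i_k(s)\rangle\Phi]=0$, together with the analogous relations that characterise the quadratic variation and the cross variation with $w^k_l$, can then be passed to the limit on any bounded continuous cylindrical $\mathscr{F}_s$-measurable $\Phi$, and a monotone class argument delivers \eqref{eqn-P^i-qv}--\eqref{eqn-P^i-cv}.

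The hardest part will be the passage to the limit in the quadratic variation relation, which requires convergence of $\mathbb{E}[\langle\varphi,P^i_k(t)\rangle^2\Phi]$, i.e., of squares of weakly converging random variables; this forces careful extraction of uniform $L^{2+\varepsilon}$ integrability of the stochastic integrand using the It\^o isometry, Lemma \ref{lem-hsop}, and the $p$-moment estimate of Proposition \ref{prop-basal2}. A secondary delicacy is that $\partial_{x_k}u^k$ converges only weakly in $L^2_{\textrm{loc}}$ while $f^{m_k}\to f$ only uniformly, so the pointwise limit \eqref{reinf} must be integrated against $\varphi$ by exploiting the uniform compact support \eqref{eqn-supp-aprox} to localise $f^{m_k}(z^k,\nabla u^k)$ to the ambient ball where $u^k$ is effectively bounded.
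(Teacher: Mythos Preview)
Your proposal follows essentially the same strategy as the paper: derive the prelimit martingale identities for $\langle\varphi,P^i_k\rangle$ from equation \eqref{reok} (which the paper obtained earlier via Lemma~\ref{lem-ito}), extract uniform $L^p$ bounds from the energy estimate in Proposition~\ref{prop-basal2} on the events $\{\|z^k(0)\|_{\mathscr H_{2R}}\le\delta\}$, pass to the limit using the convergences (R2) and \eqref{reinf}--\eqref{reing}, and then remove the truncation by letting $\delta\uparrow\infty$.

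There is, however, one subtlety you gloss over. You write $\langle\varphi,P^i_k(t)\rangle=\sum_l\int_0^t\langle[\cdots]^*\varphi,e_l\rangle_{H_\mu}\,dw^k_l$ directly on the \emph{new} probability space $(\Omega,\mathscr F,\mathbb P)$. But equations \eqref{approxsolem}--\eqref{approxsolem_2}, and hence the stochastic-integral representation, are only known for $(U^{m_k},V^{m_k},W^{m_k})$ on the original spaces $(\Omega^{m_k},\mathscr F^{m_k},\mathbb P^{m_k})$; transferring a stochastic integral through mere equality of laws is not automatic. The paper handles this cleanly by never asserting that identity on the new space: it defines three \emph{continuous} path-space functionals $N^{1,i,k}_R$, $N^{2,i,k}_\varphi$, $N^{3,i,k}_{\varphi,l}$ (for the would-be martingale, its quadratic variation, and its cross variation), observes that $(N^1(z^k),N^2(z^k),N^3(z^k),z^k,w^k)$ and $(N^1(Z^{m_k}),N^2(Z^{m_k}),N^3(Z^{m_k}),Z^{m_k},\beta^{m_k})$ have equal laws by (R1), and computes all the expectation identities \eqref{mar1}--\eqref{mar4} on the \emph{original} spaces where $N^1(Z^{m_k})$ genuinely is a stochastic integral. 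Only these expectation identities (not the integral itself) are carried to the new space, and only they are passed to the limit.

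A second point: after obtaining the limiting identities with the indicator $a_\delta=\mathbf 1_{\{\|(\mathbf u(0),\mathbf v_0)\|_{\mathscr H_{2R}}\le\delta\}}$, the paper does not simply ``let $\delta\uparrow\infty$'' but localises via the stopping times
\[
\tau_l=\inf\Bigl\{t\le R:\sup_{s\le t}|\langle\varphi,P^i\rangle(s)|+\int_0^t\|[\langle g(\mathbf u,\mathbf V,\nabla\mathbf u),A^i\mathbf u\rangle]^*\varphi\|^2_{H_\mu}\,ds\ge l\Bigr\},
\]
which makes the removal of $a_\delta$ rigorous at the level of local martingales. Your monotone-class sketch should incorporate this.
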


\begin{proof}Let us take a function    $\varphi\in L^2_{\textrm{comp}}$ and  $R>0$. Then, by employing the  argument used earlier  in the paragraph between \eqref{reok} and \eqref{zaklomez}, the  following three maps $\fourIdx1Rik{N}$, $\fourIdx2{\varphi}ik{N}$ and \fourIdx3{\varphi,l}ik{N} are continuous (and hence Borel measurable) between corresponding  Polish spaces:

$$
\fourIdx1Rik{N}:C(\mathbb{R}_+,\mathscr H_{\textrm{loc}})\to C(\mathbb{R}_+,\mathbb{W}^{-1,\newr}_R)
$$
\begin{eqnarray*}
(u,v) &\mapsto& \Big\{t\mapsto
\langle v(t),A^iu(t))\rangle_{\mathbb{R}^n}-\langle v(0),A^iu(0)\rangle_{\mathbb{R}^n}-\sum_{j=1}^d\partial_{x_j}\left[\int_0^t\left\langle\partial_{x_j}u(s),A^iu(s)\right\rangle_{\mathbb{R}^n}\,ds\right]
\\
&-&\int_0^t\left\langle f^{m_k}(z(s),\nabla u(s)),A^iu(s)\right\rangle_{\mathbb{R}^n}\,ds\Big\}
\end{eqnarray*}
$$
\fourIdx2{\varphi}ik{N}:C(\mathbb{R}_+,\mathscr H_{\textrm{loc}})\to C(\mathbb{R}_+)
$$
$$
(u,v)\mapsto\left\{t\mapsto\int_0^t\Vert\left[\langle g^{m_k}(z(s),\nabla u(s)),A^iu(s)\rangle_{\mathbb{R}^n}\right]^*\varphi\Vert_{H_\mu}^2\,ds\right\}
$$
$$
\fourIdx3{\varphi,l}ik{N}:C(\mathbb{R}_+,\mathscr H_{\textrm{loc}})\to C(\mathbb{R}_+)
$$
$$
(u,v)\mapsto\left\{t\mapsto\int_0^t\left\langle\left[\langle g^{m_k}(z(s),\nabla u(s)),A^iu(s)\rangle_{\mathbb{R}^n}\right]^*\varphi,e_l\right\rangle_{H_\mu}\,ds\right\}.
$$
 Thus, if we set $z^k=(u^k,v^k)$, the random variables
$$
\left(\fourIdx1Rik{N}(z^k),\fourIdx2{\varphi}ik{N}(z^k),\fourIdx3{\varphi,l}ik{N}(z^k),z^k,w^k\right)\;\; \mbox{ and } \;\; \left(\fourIdx1Rik{N}(Z^{m_k}),\fourIdx2{\varphi}ik{N}(Z^{m_k}),\fourIdx3{\varphi,l}ik{N}(Z^{m_k}),Z^{m_k},\beta^{m_k}\right)
$$
have the same laws on
$$
\mathscr B(C(\mathbb{R}_+;\mathbb{W}^{-1,\newr}_R))\otimes\mathscr B(C(\mathbb{R}_+))\otimes\mathscr B(C(\mathbb{R}_+))\otimes\mathscr B(C(\mathbb{R}_+;\mathscr H_{\textrm{loc}}))\otimes\mathscr B(C(\mathbb{R}_+;\mathbb{R}^{\mathbb{N}})).
$$
Notice that  by (\ref{reok})
$$
\fourIdx1Rik{N}(Z^{m_k})=\int_0^\cdot\langle g^{m_k}(Z^{m_k}(s),\nabla U^{m_k}(s)),A^iU^{m_k}(s)\rangle_{\mathbb{R}^n}\,dW^{m_k}
$$
in $\mathbb{W}^{-1,\newr}_R$. Hence  for $\varphi\in W^{1,\oldr}_{\textrm{comp}}$ supported in some $B_R\subset \mathbb{R}^d$, $p\ge 2$ and $\delta>0$,
\begin{eqnarray}
&&\mathbb{E}\,1_{\{\|z^k(0)\|_{\mathscr H_{2R}}\le\delta\}}\left[\sup_{t\in[0, R]}\left|\langle\varphi,\fourIdx1Rik{N}(z^k)(t)\rangle\right|^p+\sup_{t\in[0, R]}\left|\fourIdx2{\varphi}ik{N}(z^k)(t)\right|^\frac p2+\sup_{t\in[0, R]}\left|\fourIdx3{\varphi,l}ik{N}(z^k)(t)\right|^p\right]\nonumber
\\
&=&\mathbb{E}^{m_k}\,1_{\{\|Z^{m_k}(0)\|_{\mathscr H_{2R}}\le\delta\}}\Big[\sup_{t\in[0, R]}\left|\langle\varphi,\fourIdx1Rik{N}(Z^{m_k})(t)\rangle\right|^p+\sup_{t\in[0, R]}\left|\fourIdx2{\varphi}ik{N}(Z^{m_k})(t)\right|^\frac p2
\nonumber
\\
&+&\sup_{t\in[0, R]}\left|\fourIdx3{\varphi,l}ik{N}(Z^{m_k})(t)\right|^p\Big] \leq c_{p,R,\delta}\|\varphi\|^p_{L^2(B_R)}.\label{unif-bound-in_Lp1}
\end{eqnarray}
Indeed, by the Burkholder-Gundy-Davis inequality,  and  Lemmata \ref{lem-hsop} and  \ref{basal} all three terms in \eqref{unif-bound-in_Lp1} can be estimated from above by
\begin{eqnarray*}
&& C_{p,R}\mathbb{E}^{m_k}\,1_{\{\|Z^{m_k}(0)\|_{\mathscr H_{2R}}\le\delta\}}\int_0^R\Vert\left[\langle g^{m_k}(Z^{m_k}(s),\nabla U^{m_k}(s)),A^iU^{m_k}(s)\rangle_{\mathbb{R}^n}\rangle\right]^*\varphi\Vert^p_{H_\mu}\,ds
\\
&\leq & c^1_{p,R}\|\varphi\|^p_{L^2(B_R)}\mathbb{E}^{m_k}\,1_{\{\|Z^{m_k}(0)\|_{\mathscr H_{2R}}\le\delta\}}\int_0^R\Vert\langle g^{m_k}(Z^{m_k}(s),\nabla U^{m_k}(s)),A^iU^{m_k}(s)\rangle_{\mathbb{R}^n}\rangle\Vert^p_{L^2(B_R)}\,ds
\\
&\leq & c^2_{p,R}\|\varphi\|^p_{L^2(B_R)}\mathbb{E}^{m_k}\,1_{\{\|Z^{m_k}(0)\|_{\mathscr H_{2R}}\le\delta\}}\int_0^R\left(1+\mathbf e^\frac p2_{0,2R,m_k}(s,Z^{m_k}(s))\right)\,ds\leq c_{p,R,\delta}\|\varphi\|^p_{L^2(B_R)}.
\end{eqnarray*}
 Also
\begin{equation}\label{unif-bound-in_Lp2}
\mathbb{E}^{m_k}\,|\beta^{m_k}_l(r)|^p=\mathbb{E}\,|w^k_l(r)|^p=c_{p,r},\qquad p>0,\quad r\ge 0.
\end{equation}
Let us consider times $s$ and $t$ such that $s< t$. We can always assume that $t<R$. Let $\varphi_1,\dots,\varphi_K$ be  functions belonging to  $L^2_{\textrm{comp}}$ and  let $h:\mathbb{R}^K\times\mathbb{R}^{K\times K}\times [C(\mathbb{R}_+;\mathbb{R}^{\mathbb{N}})]^K\to[0,1]$ be a continuous function.
Let us choose numbers $s_1,\cdots,s_K$ such that  $0\leq s_1\le\dots\leq s_K\leq s$. Let us denote
\begin{eqnarray*}
\tilde a_k&=&h\left(\langle\varphi_{i_1},V^{m_k}(0)\rangle_{i_1\leq K},\langle\varphi_{i_2},U^{m_k}(s_{i_3})\rangle_{i_2,i_3\leq K},(\beta^{m_k}(s_{i_4}))_{i_4\leq K}\right)
\\
a_k&=&h\left(\langle\varphi_{i_1},v^k(0)\rangle_{i_1\leq K},\langle\varphi_{i_2},u^k(s_{i_3})\rangle_{i_2,i_3\leq K},(w^k(s_{i_4}))_{i_4\leq K}\right)
\\
a&=&h\left(\langle\varphi_{i_1},v_0\rangle_{i_1\leq K},\langle\varphi_{i_2},\mathbf u(s_{i_3})\rangle_{i_2,i_3\leq K},(w(s_{i_4}))_{i_4\leq K}\right)
\\
\tilde q_k&=&\tilde a_k\mathbf 1_{\{\|(Z^{m_k}(0))\|_{\mathscr H_{2R}}\le\delta\}},\quad q_k=a_k\mathbf 1_{\{\|(z^k(0))\|_{\mathscr H_{2R}}\le\delta\}},\quad q=a\mathbf 1_{\{\|(\mathbf u(0),\mathbf v_0)\|_{\mathscr H_{2R}}
\leq \delta\}}.
\end{eqnarray*}
Let $\varphi\in W^{1,\oldr}$ has support in $B_R$. Then
\begin{eqnarray}\label{mar1}
&&\hspace{-3truecm}\lefteqn{
\mathbb{E}^{m_k}\,\tilde q_k\left[\langle\varphi,\fourIdx1Rik{N}(Z^{m_k})(t)\rangle-\langle\varphi,\fourIdx1Rik{N}(Z^{m_k})(s)\rangle\right]}
\\&=&\mathbb{E}\,q_k\left[\langle\varphi,\fourIdx1Rik{N}(z^k)(t)\rangle-\langle\varphi,\fourIdx1Rik{N}(z^k)(s)\rangle\right]=0,
\nonumber
\end{eqnarray}
\begin{eqnarray}
&&\hspace{-2truecm}\lefteqn{\mathbb{E}^{m_k}\,\tilde q_k\left[\langle\varphi,\fourIdx1Rik{N}(Z^{m_k})(t)
\rangle^2-\fourIdx2{\varphi}ik{N}(Z^{m_k})(t)-\langle\varphi,\fourIdx1Rik{N}(Z^{m_k})(s)\rangle^2+\fourIdx2{\varphi}ik{N}(Z^{m_k})(s)\right]
}
\\
&=&\mathbb{E}\,q_k\left[\langle\varphi,\fourIdx1Rik{N}(z^k)(t)\rangle^2-\fourIdx2{\varphi}ik{N}(z^k)(t)-\langle\varphi,\fourIdx1Rik{N}(z^k)(s)\rangle^2+\fourIdx2{\varphi}ik{N}(z^k)(s)\right]=0,\label{mar2}
\nonumber
\end{eqnarray}
\begin{eqnarray*}
\mathbb{E}^{m_k}\,\tilde a_k(\beta^{m_k}_l(t)-\beta^{m_k}_l(s))=\mathbb{E}\,a_k(w^k_l(t)-w^k_l(s))=0
\end{eqnarray*}

\begin{eqnarray}
&&\hspace{-3truecm}\lefteqn{\mathbb{E}^{m_k}\,\tilde a_k\left[\beta^{m_k}_l(t)\beta^{m_k}_j(t)-t\delta_{lj}-\beta^{m_k}_l(s)\beta^{m_k}_j(s)+s\delta_{lj}\right]}\nonumber
\\
&=&\mathbb{E}\,a_k\left[w^k_l(t)w^k_j(t)-t\delta_{lj}-w^k_l(s)w^k_j(s)+s\delta_{lj}\right]=0,\nonumber
\end{eqnarray}

\begin{eqnarray}
&&\hspace{-3truecm}\lefteqn{\mathbb{E}^{m_k}\,\tilde q_k\left[\langle\varphi,\fourIdx1Rik{N}(Z^{m_k})(t)\rangle\beta^{m_k}_l(t)-\fourIdx3{\varphi,l}ik{N}(Z^{m_k})(t)\right]}
\nonumber
\\
&-&\mathbb{E}^{m_k}\,\tilde q_k\left[\langle\varphi,\fourIdx1Rik{N}(Z^{m_k})(s)\rangle\beta^{m_k}_l(s)-\fourIdx3{\varphi,l}ik{N}(Z^{m_k})(s)\right]\nonumber
\\
&&\hspace{-2.7truecm}\lefteqn{=\; \mathbb{E}\,q_k\left[\langle\varphi,\fourIdx1Rik{N}(z^k)(t)\rangle w^k_l(t)-\fourIdx3{\varphi,l}ik{N}(z^k)(t)\right]}
\nonumber
\\
&-&\mathbb{E}\,q_k\left[\langle\varphi,\fourIdx1Rik{N}(z^k)(s)\rangle w^k_l(s)-\fourIdx3{\varphi,l}ik{N}(z^k)(s)\right]=0.\label{mar4}
\end{eqnarray}

Next, since by Lemma \ref{lem-hsop}
$$
\|(g^{m_k})^*\varphi\|^2_{H_\mu}=\sum_l\langle g^{m_k},\varphi e_l\rangle^2,\qquad\sum_l\|\varphi e_l\|^2_{L^2(B_R)}\le c\|\varphi\|^2_{L^2(B_R)},
$$
 by applying the compactness of the  embedding of $H^1(B_R)$ in $L^{2r/(r-2)}(B_R)$, Lemma \ref{verin},  property (R2) from the beginning of this section and  the Lebesgue Dominated Convergence Theorem   we infer that the following three limits exist in $C(\mathbb{R}_+)$ almost surely,
\begin{eqnarray*}
\lim_{k\to\infty}\langle\varphi,\fourIdx1Rik{N}(z^k)\rangle&=&\left\langle\varphi,P^i\right\rangle
\\
\lim_{k\to\infty}N^{2i,k}_{\varphi}(z^k)&=&\int_0^\cdot\Vert\left[\left\langle g(\mathbf u(s),\mathbf V(s),\nabla \mathbf u(s)),A^i\mathbf u(s)\right\rangle_{\mathbb{R}^n}\right]^*\varphi\Vert_{H_\mu}^2\,ds=:P^{2i}_\varphi,
\\
\lim_{k\to\infty}\fourIdx3{\varphi,l}ik{N}(z^k)&=&\int_0^\cdot\left\langle\left[\left\langle g(\mathbf u(s),\mathbf V(s),\nabla \mathbf u(s)),A^i\mathbf u(s)\right\rangle_{\mathbb{R}^n}\right]^*\varphi,e_l\right\rangle_{H_\mu}\,ds=:P^{3i}_{\varphi,l}.
\end{eqnarray*}

 So, in view of the uniform boundedness (\ref{unif-bound-in_Lp1}), (\ref{unif-bound-in_Lp2}) in every $L^p(\Omega)$, the integrals (\ref{mar1})-(\ref{mar4}) converge as $k\toup \infty$ provided that $\mathbb{P}\,\{\|(\mathbf u(0),\mathbf v_0)\|_{\mathscr H_{2R}}
=\delta\}=0$ and we obtain
\begin{eqnarray*}
\mathbb{E}\,q\langle\varphi,P^i(t)\rangle&=&\mathbb{E}\,q\langle\varphi,P^i(s)\rangle
\\
\mathbb{E}\,q\left[\langle\varphi,P^i(t)\rangle^2-P^{2i}_\varphi(t)\right]&=&\mathbb{E}\,q\left[\langle\varphi,P^i(s)\rangle^2-P^{2i}_\varphi(s)\right]
\\
\mathbb{E}\,aw_l(t)&=&\mathbb{E}\,aw_l(s)
\\
\mathbb{E}\,a\left[w_l(t)w_j(t)-t\delta_{lj}\right]&=&\mathbb{E}\,a\left[w_l(s)w_j(s)-s\delta_{lj}\right]
\\
\mathbb{E}\,q\left[\langle\varphi,P^i(t)\rangle w_l(t)-P^{3i}_{\varphi,l}(t)\right]&=&\mathbb{E}\,q\left[\langle\varphi,P^i(s)\rangle w_l(s)-P^{3i}_{\varphi,l}(s)\right].
\end{eqnarray*}
Hence, in view of Corollary \ref{aml1},
\begin{eqnarray*}
\mathbb{E}\,\left[a_\delta\langle\varphi,P^i(t)\rangle|\mathscr{F}_s\right]&=&a_\delta\langle\varphi,P^i(s)\rangle
\\
\mathbb{E}\,\left\{a_\delta\left[\langle\varphi,P^i(t)\rangle^2-P^{2i}_\varphi(t)\right]|\mathscr{F}_s\right\}&=&a_\delta\left[\langle\varphi,P^i(s)\rangle^2-P^{2i}_\varphi(s)\right]
\\
\mathbb{E}\,[w_l(t)|\mathscr{F}_s]&=&w_l(s)
\\
\mathbb{E}\,\left[w_l(t)w_j(t)-t\delta_{lj}|\mathscr{F}_s\right]&=&w_l(s)w_j(s)-s\delta_{lj}
\\
\mathbb{E}\,\left\{a_\delta\left[\langle\varphi,P^i(t)\rangle w_l(t)-P^{3i}_{\varphi,l}(t)\right]|\mathscr{F}_s\right\}&=&a_\delta\left[\langle\varphi,P^i(s)\rangle w_l(s)-P^{3i}_{\varphi,l}(s)\right]
\end{eqnarray*}
where $a_\delta=1_{[\|(\mathbf u(0),\mathbf v_0)\|_{\mathscr H_{2R}}\le\delta]}$.

 Therefore we proved that $w_1,w_2,\dots$ are independent $\mathbb{F}$-wiener processes, $a_\delta\langle\varphi,P^i\rangle$ is an
$\mathbb{F}$-martingale on $[0,T]$, and the quadratic and the cross variations satisfy
 $$\langle a_\delta\langle\varphi,P^i\rangle\rangle=a_\delta P^{2i}_\varphi \mbox{ and } \langle a_\delta\langle\varphi,P^i\rangle,w_l\rangle=a_\delta P^{3i}_{\varphi,l} \mbox{ on }[0,R].$$
 In order to finish the proof  we introduce the following $\mathbb{F}$-stopping times
$$
\tau_l=\inf\,\left\{t\in[0, R]:\sup_{s\in[0, t]}|\langle\varphi,P^i\rangle(t)|+\int_0^t\Vert\left[\langle g(\mathbf u(s),\mathbf V(s),\nabla\mathbf u(s)),A^i\mathbf u(s)\rangle_{\mathbb{R}^n}\right]^*\varphi\Vert^2_{H_\mu}\,ds\ge l\right\}.
$$
 By letting $\delta\toup \infty$ we deduce that $(\tau_l)$ localizes $\langle\varphi,P^i\rangle$, $P^{2i}_\varphi$ and $P^{3i}_{\varphi,l}$ on $[0,R]$.  The result now follows by letting $R\toup \infty$.
\end{proof}

\begin{proposition}\label{prop-Wiener} Let $(e_l)_{l=1}^\infty$ be an ONB of the RKHS $H_\mu$ and let us  set
\begin{equation}\label{eqn-1058} W\psi=\sum_{l=1}^\infty w_le_l(\psi),\;\; \psi\in\mathscr S(\mathbb{R}^d).
 \end{equation}
 Then $W$ is a spatially homogeneous $\mathbb{F}$-Wiener process with spectral measure $\mu$, and for every  function $\varphi\in H^1_{\textrm{comp}}$ the following equality holds almost surely
\begin{eqnarray}
\label{eqn-1059}
\left\langle\varphi,\mathbf M^i(t)\right\rangle&=&\left\langle\varphi,\mathbf M^i(0)\right\rangle-\sum_{k=1}^d\left\langle\partial_{x_k}\varphi,\int_0^t\left\langle\partial_{x_k}\mathbf u (s),A^i\mathbf u(s)\right\rangle_{\mathbb{R}^n}\,ds\right\rangle
\\
\nonumber
&+&\left\langle\varphi,\int_0^t\left\langle f(\mathbf u(s),\mathbf V(s),\nabla\mathbf u(s)),A^i\mathbf u(s)\right\rangle\,d s\right\rangle
\\
\nonumber
&+&\left\langle\varphi,\int_0^t\left\langle g(\mathbf u(s),\mathbf V(s),\nabla\mathbf u(s)),A^i\mathbf u(s)\right\rangle\,dW(s)\right\rangle,\qquad t\ge 0.
\end{eqnarray}

\begin{proof} By Lemma \ref{lem-essee-2} we infer that $W\varphi$ is an $\mathbb{F}$-Wiener process and
$$
\mathbb{E}\,|W_t\varphi|^2=t\sum_l|e_l(\varphi)|^2=t\|\widehat\varphi\|_{L^2(\mu)}^2.
$$
Hence  $W$ is a spatially homogeneous $\mathbb{F}$-Wiener process with spectral measure $\mu$.

Let now $P^i$ be the process defined by formula \eqref{eqn-P^i}. Let  $\varphi\in W^{1,\oldr}_{\textrm{comp}}$.
In order to prove equality \eqref{eqn-1060} it is enough to show that
\begin{equation}\label{eqn-1060}
\left\langle\langle\varphi,P^i\rangle-\int\langle\langle g(\mathbf u(s),\mathbf V(s),\nabla\mathbf u(s)),A^i\mathbf u(s)\rangle_{\mathbb{R}^n}\,dW,\varphi\rangle\right\rangle=0.
\end{equation}

Then we have

\begin{eqnarray*}
&&\left\langle\langle\varphi,P^i\rangle,\int\langle\langle g(\mathbf u(s),\mathbf V(s),\nabla\mathbf u(s)),A^i\mathbf u(s)\rangle_{\mathbb{R}^n}\,dW,\varphi\rangle\right\rangle
\\
&=&\sum_l\left\langle\langle\varphi,P^i\rangle,\int\langle\langle g(\mathbf u(s),\mathbf V(s),\nabla\mathbf u(s)),A^i\mathbf u(s)\rangle_{\mathbb{R}^n}e_l,\varphi\rangle\,dw_l\right\rangle
\\
&=&\sum_l\int\langle\langle g(\mathbf u(s),\mathbf V(s),\nabla\mathbf u(s)),A^i\mathbf u(s)\rangle_{\mathbb{R}^n}e_l,\varphi\rangle\,d\left\langle\langle\varphi,P^i\rangle,w_l\right\rangle
\\
&=&\sum_l\int\langle\langle g(\mathbf u(s),\mathbf V(s),\nabla\mathbf u(s)),A^i\mathbf u(s)\rangle_{\mathbb{R}^n}e_l,\varphi\rangle^2\,ds
\\
&=&\int\Vert[\langle g(\mathbf u(s)\mathbf V(s),\nabla\mathbf u(s)),A^i\mathbf u(s)\rangle_{\mathbb{R}^n}]^*\varphi\Vert^2_{H_\mu}\,ds
\\
&=&\left\langle\langle\varphi,P^i\rangle\right\rangle
\\
&=&\left\langle\int\langle\langle g(\mathbf u(s),\mathbf V(s),\nabla\mathbf u(s)),A^i\mathbf u(s)\rangle_{\mathbb{R}^n}\,dW,\varphi\rangle\right\rangle
\end{eqnarray*}
so \eqref{eqn-1060} follows. The proof is complete.

\end{proof}

\begin{lemma}\label{lem-identification} The $L^2_{\textrm{loc}}$-valued process $\bold v$ introduced in (\ref{vsolu}) is $\mathbb{F}$-adapted and weakly continuous. Moreover,  $\bold v(t)\in T_{\mathbf u(t)}M$ for every $t\ge 0$ almost surely and for every  $\varphi\in\mathscr D(\mathbb{R}^d)$
\begin{eqnarray}\label{eqn-1061}
\langle\bold v(t),\varphi\rangle&=&\langle\bold v(0),\varphi\rangle+\int_0^t\left\langle\mathbf u(s),\Delta\varphi\right\rangle\,ds+\int_0^t\left\langle\bold S_{\mathbf u(s)}\left(\bold v(s),\bold v(s)\right),\varphi\right\rangle
\\
&-&\sum_{k=1}^d\int_0^t\left\langle\bold S_{\mathbf u(s)}\left(\partial_{x_k}\mathbf u(s),\partial_{x_k}\mathbf u(s)\right),\varphi\right\rangle+\int_0^t\left\langle f(\mathbf u(s),\mathbf v(s),\nabla\mathbf u(s)),\varphi\right\rangle\,ds
\nonumber
\\
&+&\int_0^t\left\langle g(\mathbf u(s),\mathbf v(s),\nabla\mathbf u(s))\,dW,\varphi\right\rangle
\nonumber
\end{eqnarray}
almost surely for every $t\ge 0$.
\end{lemma}

\begin{proof} Obviously  the process $\bold v$ is $L^2_{\textrm{loc}}$-valued. The $\mathbb{F}$-adaptiveness  and the weak continuity of $\bold v$  follows from the definition (\ref{vsolu}) and Lemma \ref{verin}.

In order to prove the equality \eqref{eqn-1061} let us take a test function $\varphi\in\mathscr D(\mathbb{R}^d)$ and functions $h_{ij}$  as in the Assumption \textbf{M4}. Then we consider vector fields $Y^i$, $i=1,\cdots,n$ defined by formula \eqref{eqn-Y^i}, i.e. $Y^i(x)=\sum_{j=1}^Nh_{ij}(x)A^jx$. Let $\mathbf{M}^i$ be the process  introduced in Lemma \ref{verin} and satisfies the identity \eqref{eqn-1061}. Then  by applying Lemma \ref{lem-ito} to  the processes $\mathbf u$ and  $\mathbf{M}^i$ and the vector field $Y^i$ we get the following equality
\begin{eqnarray}
\sum_{i=1}^N\Big\langle\mathbf M^i(t)Y^i(\mathbf u(t)),\varphi\Big\rangle&=&\sum_{i=1}^N\Big\langle\mathbf M^i(0)Y^i(\mathbf u(0)),\varphi\Big\rangle+\sum_{i=1}^N\int_0^t\Big\langle\mathbf  M^i(s)(d_{\mathbf u(s)}Y^i)\big(\mathbf V(s)\big),\varphi\Big\rangle\,ds\nonumber
\\
&-&\sum_{i=1}^N\sum_{l=1}^d\int_0^t\Big\langle\big\langle\partial_{x_l}\mathbf u(s),A^i\mathbf u(s)\big\rangle_{\mathbb{R}^n}Y^i(\mathbf u(s)),\partial_{x_l}\varphi\Big\rangle\,ds
\label{eqn-M^iY^i}
\\
&-&\sum_{i=1}^N\sum_{l=1}^d\int_0^t\Big\langle\big\langle\partial_{x_l}\mathbf u(s),A^i\mathbf u(s)\big\rangle_{\mathbb{R}^n}(d_{\mathbf u(s)}Y^i)\big(\partial_{x_l}\mathbf u(s)\big),\varphi\Big\rangle\,ds
\nonumber\\
&+&\sum_{i=1}^N\int_0^t\Big\langle\big\langle f(\mathbf u(s),\mathbf V(s),\nabla\mathbf u(s)),A^i\mathbf u(s)\big\rangle_{\mathbb{R}^n}Y^i(\mathbf u(s)),\varphi\Big\rangle\,ds\nonumber
\\
&+&\sum_{i=1}^N\int_0^t\Big\langle\big\langle g(\mathbf u(s),\mathbf V(s),\nabla\mathbf u(s)),A^i\mathbf u(s)\big\rangle_{\mathbb{R}^n}Y^i(\mathbf u(s))\,dW,\varphi\Big\rangle\nonumber
\end{eqnarray}
$\mathbb{P}$-almost surely for every $t\ge 0$.

Next, by identity \eqref{vsolu} in Lemma \ref{verin}, we have, for each $t\geq 0$ and $\omega \in \Omega$,
\begin{equation}
\label{eqn-1063}
\sum_{i=1}^N\left\langle\mathbf M^i(t)Y^i(\mathbf u(t)),\varphi\right\rangle=\left\langle\mathbf v(t,\omega),\varphi\right\rangle.
\end{equation}
and by identity \eqref{ass-M4_2} we have, for each $s\geq 0$ and $\omega \in \Omega$,
\begin{equation}
\label{eqn-1064}
\sum_{i=1}^N\sum_{l=1}^d\Big\langle\big\langle\partial_{x_l}\mathbf u(s),A^i\mathbf u(s)\big\rangle_{\mathbb{R}^n}Y^i(\mathbf u(s)),\partial_{x_l}\varphi\Big\rangle=\sum_{l=1}^d \Big\langle\partial_{x_l}\mathbf u(s),\partial_{x_l}\varphi\Big\rangle.
\end{equation}
Furthermore, from Lemma \ref{lem-2ff} we infer that  for each $s\geq 0$ and $\omega \in \Omega$,
\begin{equation}\label{eqn-1065}
\sum_{i=1}^N\sum_{l=1}^d \Big\langle\big\langle\partial_{x_l}\mathbf u(s),A^i\mathbf u(s)\big\rangle_{\mathbb{R}^n}
(d_{\mathbf u(s)}Y^i)\big(\partial_{x_l}\mathbf u(s)\big),\varphi\Big\rangle
=\sum_{l=1}^d \Big\langle \bold S_{\mathbf u(s)}\Big(\partial_{x_l}\mathbf u(s),\partial_{x_l}\mathbf u(s)\Big),\varphi\Big\rangle.
\end{equation}
Similarly, by the identity \eqref{eqn-M^i=VA^i}, identity \eqref{eqn-2ff} in  Lemma \ref{lem-2ff} and Lemma \ref{thick-V}, we infer that for a.e. $s\geq 0$,
\begin{eqnarray}\label{eqn-1066} \sum_{i=1}^N \Big\langle\mathbf  M^i(s)(d_{\mathbf u(s)}Y^i)\big(\mathbf V(s)\big),\varphi\Big\rangle
 &=&   \sum_{i=1}^N \Big\langle \big\langle \mathbf V(s) ,A^i\mathbf u(s)\big\rangle_{\mathbb{R}^n}(d_{\mathbf u(s)}Y^i)\big(\mathbf V(s)\big)
 ,\varphi\Big\rangle
\nonumber
\\
=\Big\langle\bold S_{\mathbf u(s)}\big(\mathbf V(s),\mathbf V(s)\big),\varphi\Big\rangle
&=&\Big\langle\bold S_{\mathbf u(s)}\big(\mathbf v(s),\mathbf v(s)\big),\varphi\Big\rangle
\end{eqnarray}
holds a.s. Moreover, by a similar argument based on  \eqref{ass-M4_2} we can deal with  the integrands of the last two terms on the RHS of \eqref{eqn-M^iY^i}. Indeed by \eqref{eqn-Y^i}  we  get
\begin{eqnarray}
\sum_{i=1}^N\Big\langle\big\langle f(\mathbf u(s),\mathbf V(s),\nabla\mathbf u(s)),A^i\mathbf u(s)\big\rangle_{\mathbb{R}^n}Y^i(\mathbf u(s)),\varphi\Big\rangle= \Big\langle f(\mathbf u(s),\mathbf V(s),\nabla\mathbf u(s)),\varphi\Big\rangle,
\\
\sum_{i=1}^N \Big\langle\big\langle g(\mathbf u(s),\mathbf V(s),\nabla\mathbf u(s)),A^i\mathbf u(s)\big\rangle_{\mathbb{R}^n}Y^i(\mathbf u(s)),\varphi\Big\rangle= \Big\langle  g(\mathbf u(s),\mathbf V(s),\nabla\mathbf u(s)),\varphi\Big\rangle.
\end{eqnarray}
Summing up, we infer from the equality \eqref{eqn-M^iY^i} and the other equalities which follow it that for every $t\geq 0$ almost surely

\begin{eqnarray}
\left\langle\mathbf v(t),\varphi\right\rangle &=&  \left\langle\mathbf v(0),\varphi\right\rangle
-\int_0^t \sum_{l=1}^d \Big\langle\partial_{x_l}\mathbf u(s),\partial_{x_l}\varphi\Big\rangle\, ds\\
 &-& \int_0^t \sum_{l=1}^d \Big\langle \bold S_{\mathbf u(s)}\Big(\partial_{x_l}\mathbf u(s),\partial_{x_l}\mathbf u(s)\Big),\varphi\Big\rangle \, ds
\nonumber\\
&+&\int_0^t \Big\langle f(\mathbf u(s),\mathbf v(s),\nabla\mathbf u(s)),\varphi\Big\rangle+ \int_0^t \Big\langle  g(\mathbf u(s),\mathbf v(s),\nabla\mathbf u(s))\,dW(s),\varphi\Big\rangle.
\nonumber\end{eqnarray}

This concludes the proof of Lemma \ref{lem-identification}.
\end{proof}

To conclude the proof of the existence of a solution, i.e. the proof of Theorem \ref{thm-main} let us observe that the above equality
 is nothing else but \eqref{sol_weak_2}. Moreover, \eqref{sol_weak_1} follows from \eqref{feq} and \eqref{vsolu}. This proves that if the process $\mathbf z:=\big(\mathbf u, \mathbf v\big)$  then $(\Omega,\mathscr{F},\mathbb{F},\mathbb{P},W,z)$ a weak solution to equation  \eqref{equa1}.

\end{proposition}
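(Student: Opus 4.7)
The $\mathbb{F}$-adaptedness, weak continuity, and tangency $\mathbf{v}(t)\in T_{\mathbf{u}(t)}M$ are immediate from definition \eqref{vsolu}: each summand is a product of the bounded continuous scalar $h_{ij}(\mathbf{u})$, the $L^2_{\textrm{loc}}$-valued adapted weakly continuous process $\mathbf{M}^i$ (Lemma \ref{verin}) and the vector $A^j\mathbf{u}$, which lies in $T_{\mathbf{u}(t,\omega)}M$ by \eqref{ass-M3_2} of hypothesis \textbf{M3}.

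For the integral identity \eqref{eqn-1061} my plan is to apply the It\^o-type Lemma \ref{lem-ito} to the pair $(\mathbf{u},\mathbf{M}^i)$ with the vector field $Y=\tilde Y^i$ from \eqref{eqn-tilde-Y^i}, a smooth compactly supported extension of $Y^i$ whose restriction to $M$ agrees with $Y^i$ and whose first derivative is uniformly bounded. Proposition \ref{prop-Wiener} exhibits each $\mathbf{M}^i$ in exactly the form \eqref{eqn_u-phi_2} required by Lemma \ref{lem-ito}, with $h_0(s)=\langle f(\mathbf{u}(s),\mathbf{V}(s),\nabla\mathbf{u}(s)),A^i\mathbf{u}(s)\rangle_{\mathbb{R}^n}$, $h_k(s)=-\langle\partial_{x_k}\mathbf{u}(s),A^i\mathbf{u}(s)\rangle_{\mathbb{R}^n}$, and diffusion $\langle g(\mathbf{u}(s),\mathbf{V}(s),\nabla\mathbf{u}(s)),A^i\mathbf{u}(s)\rangle_{\mathbb{R}^n}$; meanwhile $\mathbf{u}$ with derivative $\mathbf{V}$ satisfies \eqref{eqn_u-phi_1} by Lemma \ref{thick-V}. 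Applying the lemma component-wise in $Y^i$ and then summing over $i=1,\dots,N$, the defining formula \eqref{vsolu} converts the left-hand side $\sum_i\langle\mathbf{M}^i(t)Y^i(\mathbf{u}(t)),\varphi\rangle$ into $\langle\mathbf{v}(t),\varphi\rangle$, and similarly at $t=0$.

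The remaining work is purely algebraic and rests on two identities afforded by the homogeneous structure of $M$. The completeness relation \eqref{ass-M4_2}, namely $\xi=\sum_i\langle\xi,A^ip\rangle Y^i(p)$ for $\xi\in T_pM$, collapses the $h_k$-term tested against $\partial_{x_k}\varphi$ (with $\xi=\partial_{x_k}\mathbf{u}$, tangent a.e.\ since $\mathbf{u}\in M$) into $-\sum_k\int_0^t\langle\partial_{x_k}\mathbf{u},\partial_{x_k}\varphi\rangle\,ds=\int_0^t\langle\mathbf{u},\Delta\varphi\rangle\,ds$ after integration by parts; applied with $\xi=f(\mathbf{u},\mathbf{V},\nabla\mathbf{u})$ and with $\xi$ equal to each column of $g(\mathbf{u},\mathbf{V},\nabla\mathbf{u})$ (tangent by Assumption \ref{assum-f+g}), the same identity delivers the drift $\int_0^t\langle f,\varphi\rangle\,ds$ and the stochastic integral $\int_0^t\langle g\,dW,\varphi\rangle$ in the target form. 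Lemma \ref{lem-2ff}, which asserts $\mathbf{S}_p(\xi,\xi)=\sum_i\langle\xi,A^ip\rangle d_pY^i(\xi)$, handles the two remaining $(Y^i)'$-terms: with $\xi=\mathbf{V}$ it yields $\int_0^t\langle\mathbf{S}_{\mathbf{u}}(\mathbf{V},\mathbf{V}),\varphi\rangle\,ds$, and with $\xi=\partial_{x_k}\mathbf{u}$ it yields $-\sum_k\int_0^t\langle\mathbf{S}_{\mathbf{u}}(\partial_{x_k}\mathbf{u},\partial_{x_k}\mathbf{u}),\varphi\rangle\,ds$. After replacing $\mathbf{V}$ by $\mathbf{v}$ using \eqref{vsolu3} of Lemma \ref{verin}, the five evolution terms match \eqref{eqn-1061} exactly.

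The principal technical obstacle is verifying the integrability hypotheses of Lemma \ref{lem-ito}: the energy estimate of Proposition \ref{prop-basal2}, combined with the $L^\infty(M)$-bounds on $f$ and $g$ from Assumption \ref{assum-f+g} and the Hilbert-Schmidt bound of Lemma \ref{lem-hsop}, delivers the required $L^2_{\textrm{loc}}$-in-space controls on $h_k$ and on the diffusion, and an $L^2$-in-space bound on $h_0$, so that the case $q=2$ of Lemma \ref{lem-ito} applies; the compact support of $\tilde Y^i$ provides the uniform boundedness of $(Y^i)'$ demanded there. Because $\mathbf{u}(t,\cdot)\in M$ a.e.\ by Proposition \ref{prop-set-Q_u}, evaluating the extensions $\tilde Y^i$ and $d\tilde Y^i$ on $\mathbf{u}$ coincides with evaluating $Y^i$ and $dY^i$, so the algebraic identities \eqref{ass-M4_2} and \eqref{eqn-2ff} may be used freely under the integrals.
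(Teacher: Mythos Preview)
Your proposal is correct and follows essentially the same approach as the paper: apply the It\^o-type Lemma \ref{lem-ito} to the pair $(\mathbf{u},\mathbf{M}^i)$ with the vector field $Y^i$ (using Proposition \ref{prop-Wiener} to put $\mathbf{M}^i$ into the required form \eqref{eqn_u-phi_2}), sum over $i$, and then simplify each resulting term via the completeness relation \eqref{ass-M4_2} and the second-fundamental-form identity of Lemma \ref{lem-2ff}, finally swapping $\mathbf{V}$ for $\mathbf{v}$ via \eqref{vsolu3}. Your additional remarks on checking the integrability hypotheses of Lemma \ref{lem-ito} and on working with the compactly supported extension $\tilde Y^i$ are welcome elaborations the paper leaves implicit.
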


\appendix


\section{The Jakubowski's version of the Skorokhod representation theorem}\label{sec:Jakubowski}

\begin{theorem}\label{thm-skoro} Let $X$ be a topological space such that there exists a sequence $\{f_m\}$ of continuous functions $f_m:X\to\mathbb{R}$ that separate points of $X$.  Let us  denote by $\mathscr S$ the $\sigma$-algebra generated by the maps $\{f_m\}$. Then
\begin{trivlist}
\item[(j1)] every compact subset of $X$ is metrizable,
\item[(j2)] every Borel subset of a $\sigma$-compact set in $X$ belongs to $\mathscr S$,
\item[(j3)] every probability measure supported by a $\sigma$-compact set in $X$ has a unique Radon extension to the Borel $\sigma$-algebra on $X$,
\item[(j4)] if $(\mu_m)$ is a tight sequence of probability measures on $(X,\mathscr S)$, then there exists a subsequence $(m_k)$, a probability space $(\Omega,\mathscr{F},\mathbb{P})$ with $X$-valued Borel measurable random variables $X_k$, $X$ such that $\mu_{m_k}$ is the law of $X_k$ and $X_k$ converge almost surely to $X$. Moreover, the law of $X$ is a Radon measure.
\end{trivlist}
\end{theorem}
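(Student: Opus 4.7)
The plan is to organise everything around the canonical map $\iota:X\to\mathbb{R}^{\mathbb{N}}$ defined by $\iota(x)=(f_m(x))_{m\geq 1}$, equipped with the product topology on the target. This map is continuous because each $f_m$ is, and it is injective because the $f_m$ separate points. The codomain $\mathbb{R}^{\mathbb{N}}$ is Polish. The key observation, which I would establish first, is that for any compact $K\subset X$, the restriction $\iota|_K$ is a continuous injection from a compact space into a Hausdorff space, hence a closed map, hence a homeomorphism onto its image $\iota(K)$. Since $\iota(K)$ is a compact subspace of the metrizable space $\mathbb{R}^{\mathbb{N}}$, it is metrizable, and this transfers back to $K$, establishing (j1).

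For (j2), I would write a $\sigma$-compact set $A\subset X$ as $A=\bigcup_n K_n$ with $K_n$ compact. On each $K_n$, which is metrizable by (j1), the Borel $\sigma$-algebra is generated by the restrictions of the separating family $\{f_m\}$, because on a metrizable compact space any point-separating family of continuous functions generates the Borel $\sigma$-algebra. Hence Borel subsets of each $K_n$, and thus of $A$, lie in $\mathscr S$. Claim (j3) then follows: uniqueness of the Radon extension comes from (j2) applied to the support, and existence by pushing the measure through $\iota$ to a Borel measure on $\iota(A)\subset\mathbb{R}^{\mathbb{N}}$ and pulling back along $\iota^{-1}$, which is well defined and Borel on the $\sigma$-compact set $\iota(A)$.

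The main obstacle is (j4), and it is where the argument really uses the other parts. Given a tight sequence $\{\mu_m\}$ on $(X,\mathscr S)$, I would fix compact sets $K_j\subset X$ with $\mu_m(X\setminus K_j)<1/j$ uniformly in $m$, so that $K_\ast:=\bigcup_j K_j$ is $\sigma$-compact and by (j3) every $\mu_m$ extends uniquely to a Radon Borel measure supported on $K_\ast$. Pushing forward through $\iota$ produces probability measures $\iota_\ast\mu_m$ on the Polish space $\mathbb{R}^{\mathbb{N}}$, which remain tight with the compacts $\iota(K_j)$. The classical Skorokhod representation theorem then yields a subsequence $m_k$, a probability space $(\Omega,\mathscr F,\mathbb P)$, and Borel measurable $Y_k,Y:\Omega\to\mathbb{R}^{\mathbb{N}}$ with laws $\iota_\ast\mu_{m_k}$ and $\iota_\ast\mu$ respectively such that $Y_k\to Y$ almost surely in $\mathbb{R}^{\mathbb{N}}$.

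The delicate final step is transferring this convergence back to $X$. Almost surely $Y_k\in\iota(K_\ast)$, so I can define $X_k:=\iota^{-1}(Y_k)$, which is $\mathscr S$-measurable as the composition of $Y_k$ with the Borel-measurable inverse of $\iota$ on $\iota(K_\ast)$. To obtain $X$, I would use a Borel-Cantelli/tightness argument: for each $j$, $\mathbb P(Y_k\notin\iota(K_j))=\mu_{m_k}(X\setminus K_j)<1/j$ uniformly in $k$, so almost surely $Y$ lies in $\iota(K_\ast)$ and one can set $X:=\iota^{-1}(Y)$. The almost sure convergence $X_k\to X$ in $X$ then follows from the fact that $\iota^{-1}$ is a homeomorphism on each compact $\iota(K_j)$: on the event where a tail of $(Y_k)$ and the limit $Y$ all lie in a common $\iota(K_j)$ (an event of probability arbitrarily close to one), $Y_k\to Y$ in $\iota(K_j)$ forces $X_k\to X$ in $K_j\subset X$. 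Finally the Radonness of the law of $X$ is a consequence of (j3), since its law is supported on $K_\ast$.
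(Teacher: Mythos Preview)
The paper does not prove this theorem; it simply cites Jakubowski's original article. Your strategy via the embedding $\iota:X\to\mathbb{R}^{\mathbb{N}}$ is indeed Jakubowski's, and your arguments for (j1)--(j3) are correct.

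There is, however, a genuine gap in your treatment of (j4). After obtaining a classical Skorokhod representation $Y_k\to Y$ in $\mathbb{R}^{\mathbb{N}}$ and pulling back through $\iota^{-1}$, you assert that the event on which a tail of $(Y_k)$ together with $Y$ lies in one common $\iota(K_j)$ has probability arbitrarily close to one. Uniform tightness yields only $\mathbb{P}\big(Y_k\notin\iota(K_j)\big)<1/j$ for each \emph{individual} $k$; it gives no control whatsoever over the countable intersection $\bigcap_{k\ge N}\{Y_k\in\iota(K_j)\}$, and no Borel--Cantelli argument applies since the relevant series diverges. For a generic Skorokhod coupling on $\mathbb{R}^{\mathbb{N}}$ the $Y_k$ may escape every fixed $\iota(K_j)$ infinitely often, and then $\iota^{-1}$, which is continuous only on each $\iota(K_j)$ separately, need not convert $\mathbb{R}^{\mathbb{N}}$-convergence into $X$-convergence (the $X$-topology being in general strictly finer than the one induced by $\iota$ on $K_\ast$). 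The resolution, which is really the point of Jakubowski's contribution, is not to invoke the classical Skorokhod theorem as a black box but to construct the representing variables so that the events $\{X_k\in K_j\}$ are controlled \emph{jointly} in $k$---for instance by arranging that all the sets $\{X_k\notin K_j\}$ lie inside a single set of probability $<1/j$ independent of $k$. With such a coupling almost every $\omega$ has the whole sequence $(X_k(\omega))_k$ trapped in one compact $K_j$, and your final homeomorphism argument then goes through.
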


\begin{proof} See \cite{Jakub_1997}.
\end{proof}

\begin{corollary}\label{fiza} Under the assumptions of Theorem \ref{thm-skoro}, ff $Z$ is a Polish space and $b:Z\to X$ is a continuous injection, then $b[B]$ is a Borel set whenever $B$ is Borel in $Z$.
\end{corollary}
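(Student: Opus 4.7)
The plan is to reduce the statement to the classical Lusin--Suslin theorem by embedding $X$ into the Polish space $\mathbb{R}^{\mathbb{N}}$ via the separating sequence $(f_m)$ provided by Theorem \ref{thm-skoro}.

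First I would define the evaluation map
\[
F : X \to \mathbb{R}^{\mathbb{N}}, \qquad F(x) = \bigl(f_m(x)\bigr)_{m\in\mathbb{N}}.
\]
Since each $f_m$ is continuous, $F$ is continuous; since the family $(f_m)$ separates points of $X$, $F$ is injective. Next, form the composition
\[
\widetilde{b} := F\circ b : Z \to \mathbb{R}^{\mathbb{N}},
\]
which is a continuous injection from the Polish space $Z$ into the Polish space $\mathbb{R}^{\mathbb{N}}$ (with the product topology).

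At this point I would invoke the Lusin--Suslin theorem: a one-to-one Borel-measurable image of a Borel subset of a Polish space into another Polish space is again Borel. Applied to $\widetilde{b}$, this yields that $\widetilde{b}[B] \in \mathscr{B}(\mathbb{R}^{\mathbb{N}})$ for every $B\in\mathscr{B}(Z)$. This is the only non-elementary ingredient of the argument, and it is the step I expect to be the main technical input.

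It remains to transport this conclusion back to $X$. I would verify the identity
\[
b[B] = F^{-1}\bigl(\widetilde{b}[B]\bigr).
\]
The inclusion $\subseteq$ is immediate from $\widetilde{b}=F\circ b$. For the reverse inclusion, suppose $F(x)\in\widetilde{b}[B]$, so that $F(x)=F(b(z))$ for some $z\in B$; the injectivity of $F$ then forces $x=b(z)\in b[B]$. Since $F$ is continuous and $\widetilde{b}[B]$ is Borel in $\mathbb{R}^{\mathbb{N}}$, the set $F^{-1}(\widetilde{b}[B])$ lies in $\sigma(f_m:m\in\mathbb{N})=\mathscr{S}$, and in particular in the Borel $\sigma$-algebra $\mathscr{B}(X)$ because the $f_m$ are continuous. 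This proves that $b[B]$ is Borel in $X$, as required.
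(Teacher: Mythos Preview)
Your argument is correct. The embedding $F:X\to\mathbb{R}^{\mathbb{N}}$ via the separating sequence, the application of the Lusin--Suslin theorem to the continuous injection $\widetilde{b}=F\circ b$ between Polish spaces, and the identity $b[B]=F^{-1}(\widetilde{b}[B])$ are all sound; the last step indeed lands $b[B]$ in the $\sigma$-algebra $\mathscr{S}$ generated by the $f_m$, hence in $\mathscr{B}(X)$.

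As for comparison with the paper: the paper does not supply its own proof but simply refers to Corollary~A.2 in \cite{o5}. Your self-contained argument is the natural one for this kind of statement, and it is in fact a bit sharper than the stated corollary, since it shows $b[B]\in\mathscr{S}$ rather than merely $b[B]\in\mathscr{B}(X)$.
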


\begin{proof} See Corollary A.2 in \cite{o5}.
\end{proof}

\section{The space $L^\infty_{\textrm{loc}}(\mathbb{R}_+;L^2_{\textrm{loc}})$}\label{sec:space_L}

 Let $\mathbb{L}=L^\infty_{\textrm{loc}}(\mathbb{R}_+;L^2_{\textrm{loc}})$  be the space of equivalence classes $[f]$ of all measurable functions  $f:\mathbb{R}_+\to L^2_{\textrm{loc}}= L^2_{\textrm{loc}}(\mathbb{R}^d;\mathbb{R}^n)$ such that $\|f\|_{L^2(B_n)}\in L^\infty(0,n)$ for every $n\in\mathbb{N}$. The space $\mathbb{L}$ is equipped with  the locally convex topology generated by functionals
\begin{equation}\label{protgen}
f\mapsto\int_0^n\int_{B_n}\langle g(t,x),f(t,x)\rangle_{\mathbb{R}^n}\,dx\,dt,
\end{equation}
where $n\in\mathbb{N}$ and $g\in L^1(\mathbb{R}_+,L^2(\mathbb{R}^d))$.

Let us  also define a space
\begin{equation}\label{eqn-space-Y_m}
Y_m=L^1((0,m),L^2(B_m)),
 \end{equation}
Let us recall that $L^\infty((0,m),L^2(B_m))=Y_m^*$. Consider the following natural restriction maps

\begin{eqnarray}\label{eqn-B.03}
\pi_m&:&L^2(\mathbb{R}^d) \ni g\mapsto  g|_{B_m} \in L^2(B_m),\\
\label{eqn-B.04}
l_m &:&\mathbb{L} \ni f\mapsto (\pi_m\circ f)|_{[0,m]} \in (Y_m^*,w^*).
\end{eqnarray}
 The following results describe some properties of the space $\mathbb{L}$.

\begin{lemma}\label{lemi} A map $l=(l_m(f)\big)_{m\in\mathbb{N}}:\mathbb{L}\to \prod_{m\in\mathbb{N}}(Y^*_m,w^*)$
is a homeomorphism onto a closed subset of $\prod_{m\in\mathbb{N}}(Y^*_m,w^*)$.
\end{lemma}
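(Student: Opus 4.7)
The plan is to verify four claims in turn: continuity of $l$, that $l$ is a homeomorphism onto its image, injectivity of $l$, and closedness of $l(\mathbb{L})$ in $\prod_{m\in\mathbb{N}}(Y_m^\ast,w^\ast)$. The first two will be merged into one observation identifying the generating seminorms of the topology on $\mathbb{L}$ with the pullbacks under $l$ of the generating seminorms of the product weak-$\ast$ topology; injectivity is immediate from the definition. The bulk of the argument will be the closedness of the image, handled by a gluing argument based on a natural consistency condition between the coordinates of the limit.

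For the embedding, I will observe that the topology on $\mathbb{L}$ is generated by the seminorms
$$
p_{n,g}(f)=\Bigl|\int_0^n\!\!\int_{B_n}\langle g(t,x),f(t,x)\rangle_{\mathbb{R}^n}\,dx\,dt\Bigr|,\qquad n\in\mathbb{N},\;g\in L^1(\mathbb{R}_+;L^2(\mathbb{R}^d)),
$$
and that each such seminorm depends only on $g|_{[0,n]\times B_n}\in Y_n$; conversely, every $h\in Y_n$ arises in this way, as the restriction of its extension by zero. The product topology on $\prod_m(Y_m^\ast,w^\ast)$ is generated by the evaluations $(\xi_m)_m\mapsto\langle h,\xi_n\rangle$ for $n\in\mathbb{N}$, $h\in Y_n$, whose pullbacks under $l$ are precisely $f\mapsto\langle h,l_n(f)\rangle=p_{n,\tilde h}(f)$ with $\tilde h$ denoting the zero extension of $h$. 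Hence the two families generate the same topology, which simultaneously yields continuity of $l$ and of $l^{-1}:l(\mathbb{L})\to\mathbb{L}$. Injectivity is then just the remark that $l(f)=0$ forces $f|_{[0,m]\times B_m}=0$ a.e.\ for each $m$, so $f=0$ in $\mathbb{L}$.

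To prove closedness of $l(\mathbb{L})$, I will take a net $(f^\alpha)$ in $\mathbb{L}$ satisfying $l_m(f^\alpha)\to h_m$ in $(Y_m^\ast,w^\ast)$ for every $m\in\mathbb{N}$. For $m_1\le m_2$, $g\in Y_{m_1}$, and its zero extension $\tilde g\in Y_{m_2}$, the identity
$$
\langle g,l_{m_1}(f^\alpha)\rangle=\int_0^{m_1}\!\!\int_{B_{m_1}}\langle g,f^\alpha\rangle\,dx\,dt=\int_0^{m_2}\!\!\int_{B_{m_2}}\langle\tilde g,f^\alpha\rangle\,dx\,dt=\langle\tilde g,l_{m_2}(f^\alpha)\rangle
$$
passes to the limit to yield $\langle g,h_{m_1}\rangle=\langle\tilde g,h_{m_2}\rangle$. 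Choosing Bochner-measurable representatives $h_m:(0,m)\to L^2(B_m)$, this consistency forces $h_{m_2}(t)|_{B_{m_1}}=h_{m_1}(t)$ in $L^2(B_{m_1})$ for a.e.\ $t\in(0,m_1)$, for every pair $m_1\le m_2$. Discarding the countable union of these null sets, I will define
$$
f(t,x):=h_m(t,x)\qquad\text{whenever }t<m\text{ and }x\in B_m;
$$
consistency makes $f$ well-defined, measurable as an $L^2_{\textrm{loc}}$-valued function of $t$, with $\|f\|_{L^\infty((0,m);L^2(B_m))}=\|h_m\|_{L^\infty((0,m);L^2(B_m))}<\infty$ for every $m$. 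Thus $f\in\mathbb{L}$ and $l(f)=(h_m)_{m\in\mathbb{N}}$ by construction, which shows $l(\mathbb{L})$ is closed.

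The main delicate point will be the measurability and null-set bookkeeping in the gluing step; exploiting the countability of the index set $m\in\mathbb{N}$ together with the standard Bochner-measurability of the chosen representatives makes it routine. All the other steps are formal consequences of the definitions of the topology on $\mathbb{L}$ and of the product weak-$\ast$ topology, so no deeper input is required.
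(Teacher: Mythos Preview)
Your proof is correct. The paper itself gives no argument beyond ``The proof is straightforward,'' and what you have written is precisely the natural straightforward verification: the generating functionals \eqref{protgen} for the topology on $\mathbb{L}$ coincide, after the obvious identification, with the pullbacks of the weak-$\ast$ coordinate evaluations on $\prod_m(Y_m^\ast,w^\ast)$, which handles bicontinuity and injectivity at once; and the image is closed because it is cut out by the family of consistency conditions $\langle g,h_{m_1}\rangle=\langle\tilde g,h_{m_2}\rangle$, each of which is a continuous linear equation on the product and is preserved under weak-$\ast$ limits, while conversely any consistent family glues to an element of $\mathbb{L}$ exactly as you describe.

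One minor stylistic remark: for the closedness you do not actually need nets or the explicit gluing. Once you observe that $l(\mathbb{L})$ equals the set of tuples satisfying all the consistency equations, closedness is immediate since each equation defines a closed subset. Your gluing argument is the (equally easy) converse inclusion showing that every consistent tuple is hit by $l$, which you need anyway to identify the image, so nothing is wasted.
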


\begin{proof} The proof is straightforward.
\end{proof}

\begin{corollary}\label{compero} Given any sequence $(a_m)$ of positive numbers, the set
\begin{equation}\label{eqn-B.05}
\{f\in\mathbb{L}:\|f\|_{L^\infty((0,m),L^2(B_m))}\leq a_m,\,m\in\mathbb{N}\}
\end{equation}
is compact in $\mathbb{L}$.
\end{corollary}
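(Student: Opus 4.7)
The plan is to reduce the claim to Banach--Alaoglu and Tychonoff via the embedding constructed in Lemma \ref{lemi}. Denote the candidate compact set by
$$K=\{f\in\mathbb{L}:\|f\|_{L^\infty((0,m),L^2(B_m))}\leq a_m,\,m\in\mathbb{N}\}.$$
Because $l:\mathbb{L}\to\prod_{m\in\mathbb{N}}(Y_m^*,w^*)$ is a homeomorphism onto its image, proving that $K$ is compact in $\mathbb{L}$ is equivalent to proving that $l(K)$ is compact in the product topology of $\prod_{m\in\mathbb{N}}(Y_m^*,w^*)$.

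For each $m\in\mathbb{N}$ let $D_m\subset Y_m^*$ denote the closed ball of radius $a_m$ centered at the origin. Since $Y_m=L^1((0,m),L^2(B_m))$ is a Banach space and $Y_m^*=L^\infty((0,m),L^2(B_m))$, the Banach--Alaoglu theorem tells us that $D_m$ is compact in $(Y_m^*,w^*)$. Tychonoff's theorem then guarantees that
$$D:=\prod_{m\in\mathbb{N}}D_m$$
is compact in $\prod_{m\in\mathbb{N}}(Y_m^*,w^*)$.

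The definition of the restriction maps $l_m$ in \eqref{eqn-B.04} yields
$$\|l_m(f)\|_{Y_m^*}=\|f\|_{L^\infty((0,m),L^2(B_m))}, \qquad f\in\mathbb{L},$$
so a function $f\in\mathbb{L}$ belongs to $K$ if and only if $l_m(f)\in D_m$ for every $m\in\mathbb{N}$. This gives the identity
$$l(K)=l(\mathbb{L})\cap D.$$
By Lemma \ref{lemi}, $l(\mathbb{L})$ is a closed subset of $\prod_{m\in\mathbb{N}}(Y_m^*,w^*)$; hence $l(K)$ is the intersection of a closed set with the compact set $D$, and is therefore compact. Pulling back through the homeomorphism $l$ shows that $K$ is compact in $\mathbb{L}$, which completes the proof.

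There is no genuine obstacle here: the content is entirely that the ``weak-$*$ type'' topology on $\mathbb{L}$ has been set up precisely so that the bounded sets of the form prescribed in $K$ correspond, under $l$, to products of weak-$*$ balls, at which point Banach--Alaoglu and Tychonoff finish the argument. The one point worth checking carefully is that the defining seminorms (B.1) indeed generate the topology pulled back from $\prod_{m}(Y_m^*,w^*)$, but this is exactly the content of Lemma \ref{lemi}.
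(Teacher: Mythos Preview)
Your proof is correct and follows essentially the same approach as the paper: the paper's proof invokes Lemma \ref{lemi}, the Banach--Alaoglu theorem, and Tychonoff's theorem in exactly the way you have spelled out. Your version simply makes explicit the identification $l(K)=l(\mathbb{L})\cap D$ and the role of the closedness of $l(\mathbb{L})$, which the paper leaves implicit.
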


\begin{proof} The proof follows immediately from Lemma \ref{lemi} and the Banach-Alaoglu theorem since a product of compacts is a compact by the Tychonov theorem.
\end{proof}

\begin{corollary}\label{coli} The Skorokhod representation theorem \ref{thm-skoro} holds for every tight sequence of probability measures defined on $\big(\mathbb{L},\sigma(\mathbb{L}^*)\big)$,  where the $\sigma$-algebra $\sigma(\mathbb{L}^*)$ is the $\sigma$-algebra on $\mathbb{L}$ generated by $\mathbb{L}^*$.
\end{corollary}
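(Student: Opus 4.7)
The plan is to verify that $\mathbb{L}$ fits the framework of Theorem \ref{thm-skoro} and then invoke conclusion (j4). The essential task is to produce a countable family of continuous real functions on $\mathbb{L}$ that separates points and whose generated $\sigma$-algebra coincides with $\sigma(\mathbb{L}^*)$.

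For each $m\in\mathbb{N}$, the space $Y_m=L^1((0,m),L^2(B_m))$ is separable (since $L^2(B_m)$ is a separable Hilbert space and the time interval is $\sigma$-finite), so I would fix a countable dense subset $D_m\subset Y_m$. Viewed via extension by zero as elements of $L^1(\mathbb{R}_+,L^2(\mathbb{R}^d))$, each $g\in D_m$ yields via \eqref{protgen} a continuous linear functional $F_g$ on $\mathbb{L}$. Enumerating $\{F_g:g\in\bigcup_m D_m\}$ as a sequence $\{F_k\}_{k\in\mathbb{N}}$, I obtain countably many continuous functionals on $\mathbb{L}$. They separate points: if $F_k(\phi)=0$ for all $k$, then for every $m$ the functional $l_m(\phi)\in Y_m^*$ annihilates $D_m$, hence by density annihilates all of $Y_m$, so $l_m(\phi)=0$ for every $m$ and Lemma \ref{lemi} yields $\phi=0$ in $\mathbb{L}$.

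Next I would identify $\sigma(\{F_k\})$ with $\sigma(\mathbb{L}^*)$. The inclusion $\sigma(\{F_k\})\subseteq\sigma(\mathbb{L}^*)$ is immediate, since each $F_k$ belongs to $\mathbb{L}^*$. For the converse, the topology of $\mathbb{L}$ is by construction the weak topology induced by the functionals in \eqref{protgen}, so every $F\in\mathbb{L}^*$ admits a representation $F(\phi)=\int_0^n\int_{B_n}\langle g(t,x),\phi(t,x)\rangle_{\mathbb{R}^n}\,dx\,dt$ for some $n\in\mathbb{N}$ and $g\in Y_n$. Choosing $g_{k_j}\in D_n$ with $g_{k_j}\to g$ in $Y_n$ gives $F_{k_j}(\phi)\to F(\phi)$ for every $\phi\in\mathbb{L}$, so $F$ is a pointwise limit of $\sigma(\{F_k\})$-measurable functions and is therefore $\sigma(\{F_k\})$-measurable itself.

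Once these two prerequisites are in hand, all the hypotheses of Theorem \ref{thm-skoro} are satisfied, and assertion (j4) applied to any tight sequence of probability measures on $(\mathbb{L},\sigma(\mathbb{L}^*))$ yields the desired subsequential Skorokhod representation with almost-sure convergence in $\mathbb{L}$, which is exactly the statement of the corollary. I expect the only mildly delicate step to be the second one, the identification of $\sigma$-algebras, since it relies on the explicit description of $\mathbb{L}^*$ via \eqref{protgen}; everything else reduces to separability of $Y_m$ together with a routine density argument.
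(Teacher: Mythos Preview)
Your proposal is correct and follows essentially the same route as the paper: both arguments use the separability of each $Y_m=L^1((0,m),L^2(B_m))$ to produce countably many continuous linear functionals on $\mathbb{L}$ (via Lemma~\ref{lemi}) that separate points, and then invoke Jakubowski's theorem. Your version is in fact slightly more complete than the paper's, since you explicitly verify that the $\sigma$-algebra generated by the separating sequence coincides with $\sigma(\mathbb{L}^*)$; the paper leaves this identification implicit.
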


\begin{proof}
 Since each $Y_m$ is a separable Banach space,  there exists a sequence $(j_{m,k})_{k}$,  such that  each $j_{m,k}:(Y^*_m,w^*)\to\mathbb{R}$ is a  continuous function and  $(j_{m,k})_{k}$  separate points of $Y^*_m$. Consequently, such a separating sequence of continuous functions exists for product space $\prod(Y^*_m,w^*)$, and, by Lemma \ref{lemi}, for the $\mathbb{L}$ as well. Existence of a separating sequence of continuous functions is sufficient for the Skorokhod representation theorem to hold by the Jakubowski theorem \cite{Jakub_1997}.
\end{proof}

\begin{proposition}\label{prop-wis} Let $\bar \xi$ be an $\mathbb{L}$-valued random variable. Then there exists a measurable $L^2_{\textrm{loc}}$-valued process $\xi$  such that for every $\omega\in\Omega$,
\begin{equation}\label{eqn-B.06}[\xi(\cdot,\omega)]=\bar\xi(\omega).
\end{equation}
\end{proposition}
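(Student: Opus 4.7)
The strategy is to construct $\xi$ via a temporal mollification of any representative. Fix $k\in\mathbb{N}$, $k\ge 1$. For each $\omega\in\Omega$, pick a representative $f_\omega \in \bar\xi(\omega)$. Since $\pi_m\circ f_\omega \in L^\infty((0,m);L^2(B_m))$ for every $m$, the Bochner integral
$$\xi_k(t,\omega) := k\int_t^{t+1/k} f_\omega(s)\,ds$$
is well defined in $L^2_{\textrm{loc}}$ and, because averaging is unchanged under $\mathrm{Leb}$-null modifications of the integrand, depends only on $\bar\xi(\omega)$, not on the choice of representative. Moreover, for every $\psi\in L^2(B_m)$ and every $m$ with $t+1/k\le m$,
$$\bigl\langle \pi_m\xi_k(t,\omega),\psi\bigr\rangle_{L^2(B_m)} \;=\; \bigl\langle l_m(\bar\xi)(\omega),\,k\mathbf 1_{[t,t+1/k]}\otimes \psi\bigr\rangle_{Y_m^*,Y_m},$$
and since $l_m:\mathbb{L}\to (Y_m^*,w^*)$ is continuous by Lemma \ref{lemi} while the right-hand side is a weak-$\ast$ continuous functional in its first argument, the right-hand side is $\mathscr F$-measurable in $\omega$ for each fixed $t,k,\psi,m$.

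By the Pettis measurability theorem applied in the separable Hilbert space $L^2(B_m)$, the weak measurability established above upgrades to Borel measurability of $\omega\mapsto\pi_m\xi_k(t,\omega)$ into $L^2(B_m)$, and hence $\xi_k(t,\cdot):\Omega\to L^2_{\textrm{loc}}$ is Borel measurable. On the other hand, essential boundedness of $\pi_m f_\omega$ on $(0,m)$ makes $t\mapsto\pi_m\xi_k(t,\omega)$ continuous in $L^2(B_m)$ for every $\omega$, so $t\mapsto \xi_k(t,\omega)$ is continuous in $L^2_{\textrm{loc}}$. Continuity in $t$ together with Borel measurability in $\omega$ yields that $\xi_k:\mathbb{R}_+\times\Omega\to L^2_{\textrm{loc}}$ is jointly measurable.

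For each fixed $\omega$ and each $m$, the Lebesgue differentiation theorem applied to $\pi_m f_\omega$ (which lies in $L^1_{\textrm{loc}}(\mathbb{R}_+;L^2(B_m))$ by essential boundedness) gives a $\mathrm{Leb}$-null set $N_{m,\omega}\subset\mathbb{R}_+$ such that $\pi_m\xi_k(t,\omega)\to \pi_m f_\omega(t)$ in $L^2(B_m)$ for every $t\in\mathbb{R}_+\setminus N_{m,\omega}$. The countable union $N_\omega := \bigcup_m N_{m,\omega}$ is still null, and for $t\notin N_\omega$ we have $\xi_k(t,\omega)\to f_\omega(t)$ in $L^2_{\textrm{loc}}$. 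I then set
$$\xi(t,\omega) := \begin{cases} \displaystyle\lim_{k\to\infty}\xi_k(t,\omega), & \text{if the limit exists in } L^2_{\textrm{loc}}, \\ 0, & \text{otherwise.}\end{cases}$$
The set $E\subset\mathbb{R}_+\times\Omega$ on which the limit exists is jointly measurable (being the set where the sequence $\xi_k$ is Cauchy in each seminorm $\pi_m$), so $\xi$ is a jointly measurable $L^2_{\textrm{loc}}$-valued process. For every $\omega$, the section $E^\omega$ is co-null in $\mathbb{R}_+$ and $\xi(\cdot,\omega)$ agrees with $f_\omega$ on $E^\omega$, so $[\xi(\cdot,\omega)]=\bar\xi(\omega)$.

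The delicate step is the measurability of the mollification: one must pass from the abstract measurability of $\bar\xi$ (into $\mathbb{L}$ equipped with $\sigma(\mathbb{L}^*)$) to a pointwise defined, jointly measurable object. The identification of the mollified value via the duality pairing with $l_m(\bar\xi)$ together with Pettis' theorem does exactly this, and everything else (independence of representative, a.e. convergence, and identification of the section on a co-null set) is then routine.
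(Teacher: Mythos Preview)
Your proof is correct and follows essentially the same approach as the paper: temporal mollification of the equivalence class to produce a process continuous in $t$ and measurable in $\omega$ (via the duality pairing that defines the topology on $\mathbb{L}$), then passing to the a.e.\ limit. The only cosmetic differences are that the paper uses a smooth approximation of identity $\varphi_n$ in place of your Steklov averages $k\mathbf 1_{[t,t+1/k]}$, invokes a dedicated measurability lemma (Corollary~\ref{aml1}) rather than Pettis, and cites Lusin's theorem instead of the Lebesgue differentiation theorem for the a.e.\ convergence; the underlying mechanism is identical.
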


\begin{proof} Let $\big(\varphi_n\big)_{n=1}^\infty$ be an approximation of identity on $\mathbb{R}$. Let us fix $t\geq 0$ and $n\in\mathbb{N}^\ast$. Then the linear operator
\begin{equation}\label{eqn-B.07}
I_n(t):\mathbb{L}\ni f\mapsto\int_0^\infty\varphi_n(t-s)f(s)\,ds \in L^2_{\textrm{loc}}(\mathbb{R}^d)
\end{equation}
 is well defined and for all $\psi\in (L^2_{\textrm{loc}}(\mathbb{R}^d))^*=L^2_{\textrm{comp}}(\mathbb{R}^d)$ and $t\geq 0$,
the function $\psi\circ I_n(t):\mathbb{L}\to\mathbb{R}$ is continuous. Hence in view of
  Corollary \ref{aml1}  the map $I_n(t)$ is  Borel measurable.
 We put
\begin{equation}\label{eqn-B.09}
I: \mathbb{L}\ni f\mapsto \begin{cases}\lim_{n\to\infty}I_n(t)(t),&  \mbox{provided the limit in }  L^2_{\textrm{loc}}(\mathbb{R}^d) \mbox{ exists},\\
 0, & \mbox{ otherwise}.
 \end{cases}
  \end{equation}
Then (by   employing  the Lusin Theorem \cite{Rudin_1987_RCA} in case (ii)) we infer that  given $f\in\mathbb{L}$
 \begin{trivlist}
 \item[(i)]  the map $\mathbb{R}_+\ni t\mapsto I_n(t)f\in L^2_{\textrm{loc}}$ is continuous, and
 \item[(ii)]  $\lim_{n\to\infty} I_n(t)f$ exists  in $L^2_{\textrm{loc}}$  for almost every $t\in\mathbb{R}_+$ and    $[ I(\cdot){f}]=f$.
 \end{trivlist}
 If we next  define $L^2_{\textrm{loc}}$-valued stochastic processes $\xi_n$, for $n\in\mathbb{N}^\ast$, and $\xi$ by
$\xi_n(t,\omega) = I_n(t)\big(\bar\xi(\omega)\big)$ and  $\xi(t,\omega)=I(t)\big(\bar\xi(\omega)\big)$ for $(t,\omega)\in \mathbb{R}_+\times \Omega$, then
 by (i) above we infer that $\xi_n$  is  continuous  and so   measurable. Hence    the process $\xi$ is  also measurable and   by (ii) above,    given $\omega\in\Omega$,
the function $\{ \mathbb{R}_+\ni t\mapsto \xi(t,\omega)\}$ is a representative of $\bar\xi(\omega)$. The proof is complete.
\end{proof}

\section{The space $C_w(\mathbb{R}_+;W^{k,p}_{\textrm{loc}})$, $k\ge 0$, $1<p<\infty$}\label{sec:space_C}
Let us introduce the spaces, for $l\geq 0$ and with $\frac{1}{p^\prime}+\frac{1}{p}=1$,
\begin{eqnarray*}
W^{l,p}_m & = &  \{f\in W^{l,p}(\mathbb{R}^d;\mathbb{R}^n):f=0\textrm{ on }\mathbb{R}^d\setminus B_m\},
\\
\mathbb{W}^{l,p}_m & = & W^{l,p}(B_m;\mathbb{R}^n), \qquad \qquad \mathbb{W}^{-l,p}_m  = (W^{l,p^\prime}_m)^*.
\end{eqnarray*}
Let us recall that by $(W^{k,p}(B_m),\textrm{w})$ we mean the space $W^{k,p}(B_m)$ endowed with the weak topology.

We now formulate the first of the two main results in this Appendix.

\begin{corollary}\label{cor-dery} Let $a=(a_m)$ be a sequence of positive numbers and let $\gamma>0$, $1<r,p<\infty$, $-\infty<l\leq k$ satisfy
\begin{equation}\label{ass-GN}
\frac 1p-\frac kd \leq \frac 1r-\frac ld.
\end{equation}
Then the set
$$
K(a):=\{f\in C_{\textrm{w}}(\mathbb{R}_+;W^{k,p}_{\textrm{loc}}):\,\|f\|_{L^\infty([0,m],W^{k,p}(B_m))}+\|f\|_{C^\gamma([0,m],\mathbb{W}^{l,r}_m)}\leq a_m,\,m\in\mathbb{N}\}
$$
is a metrizable compact set in $C_{\textrm{w}}(\mathbb{R}_+;W^{k,p}_{\textrm{loc}})$.
\end{corollary}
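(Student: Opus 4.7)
The plan is to establish metrizability and compactness separately, and in both steps to reduce the problem to a statement about the trace of $K(a)$ on each interval $[0,m]$ with values in the reflexive separable Banach space $W^{k,p}(B_m)$, equipped with its weak topology.

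For metrizability, I would argue as follows. For $1<p<\infty$ the space $W^{k,p}(B_m)$ is separable and reflexive, hence by the Banach--Alaoglu theorem together with separability of $(W^{k,p}(B_m))^*$, its closed ball $\mathbb{B}_m:=\{g\in W^{k,p}(B_m):\|g\|_{W^{k,p}(B_m)}\le a_m\}$ is weakly compact and weakly metrizable; denote by $d_m$ a compatible metric. Every $f\in K(a)$, when restricted to $[0,m]$ and post-composed with the restriction to $B_m$, takes values in $\mathbb{B}_m$. Hence $K(a)$ embeds into $\prod_m C([0,m];(\mathbb{B}_m,d_m))$, and the trace on $K(a)$ of the topology of $C_w(\mathbb{R}_+;W^{k,p}_{\mathrm{loc}})$ coincides with the trace of the product metric. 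This yields metrizability.

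For compactness I intend to run a diagonal Arzelà--Ascoli argument for functions with values in the weakly compact metric space $(\mathbb{B}_m,d_m)$. Given $(f_n)\subset K(a)$: pointwise relative compactness on $[0,m]$ is immediate since $\{f_n(t)\}\subset\mathbb{B}_m$ and $(\mathbb{B}_m,d_m)$ is compact. Equicontinuity of the family $\{f_n|_{[0,m]}\}$ with values in $(\mathbb{B}_m,d_m)$ is the key point: because $d_m$ is generated by countably many elements of $(W^{k,p}(B_m))^*$, it suffices to verify that for every $\ell\in(W^{k,p}(B_m))^*$,
\[
\sup_n|\ell(f_n(t)-f_n(s))|\to 0 \quad \text{as}\ |t-s|\to 0.
\]
Once this is proved, Arzelà--Ascoli yields a subsequence convergent in $C([0,m];(\mathbb{B}_m,d_m))$, and a standard diagonal extraction over $m\in\mathbb{N}$ together with the definition of the topology on $C_w(\mathbb{R}_+;W^{k,p}_{\mathrm{loc}})$ produces a limit in $K(a)$ (the uniform bound passes to weak limits, and the Hölder bound passes to $\mathbb{W}^{l,r}_m$-limits by lower semicontinuity).

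The main obstacle is the equicontinuity step, which I plan to dispatch as follows. The Sobolev/Gagliardo--Nirenberg condition \eqref{ass-GN} yields a continuous, injective embedding $j:W^{k,p}(B_m)\embed\mathbb{W}^{l,r}_m$ whose image is dense (since $C^\infty(\overline{B_m})$ is dense in both spaces, suitably interpreted when $l<0$). By injectivity plus reflexivity, the dual map $j^*:(\mathbb{W}^{l,r}_m)^*\to(W^{k,p}(B_m))^*$ has dense image. Hence, given $\ell\in(W^{k,p}(B_m))^*$ and $\varepsilon>0$, I pick $\tilde\ell\in(\mathbb{W}^{l,r}_m)^*$ with $\|\ell-j^*\tilde\ell\|_{(W^{k,p}(B_m))^*}<\varepsilon/(4a_m)$ and then estimate
\[
|\ell(f_n(t)-f_n(s))|\le \tfrac{\varepsilon}{4a_m}\cdot 2a_m+\|\tilde\ell\|_{(\mathbb{W}^{l,r}_m)^*}\,a_m|t-s|^{\gamma},
\]
which is less than $\varepsilon$ uniformly in $n$ once $|t-s|$ is small enough. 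The first term uses the $W^{k,p}(B_m)$-bound in $K(a)$, the second uses the $\mathbb{W}^{l,r}_m$-Hölder bound. This yields the required equicontinuity and completes the argument.
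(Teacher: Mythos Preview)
Your argument is correct. The paper itself does not prove this corollary but simply cites Corollary~B.2 in \cite{o5}; your self-contained proof via Banach--Alaoglu metrizability of the balls $\mathbb{B}_m$ together with an Arzel\`a--Ascoli/diagonal argument is the standard route, and the key equicontinuity step is handled cleanly. The crucial observation---that injectivity of the Sobolev embedding $j:W^{k,p}(B_m)\hookrightarrow\mathbb{W}^{l,r}_m$ combined with reflexivity of both spaces forces $j^*$ to have norm-dense range in $(W^{k,p}(B_m))^*$---is exactly what is needed, and it avoids having to assume compactness of $j$ (which indeed may fail at the endpoint $l=k$). Two minor points worth making explicit in a final write-up: (i) the identification of the trace topology requires observing that every $\varphi\in (W^{k,p}_{\mathrm{loc}})^*$ factors through some $(W^{k,p}(B_j))^*$, so the seminorms $\|\cdot\|_{m,\varphi}$ are captured by the product metric; (ii) closedness of $K(a)$ uses that both $\sup_t\|f(t)\|_{W^{k,p}(B_m)}$ and the H\"older seminorm in $\mathbb{W}^{l,r}_m$ are weakly lower semicontinuous, and $\liminf a_n+\liminf b_n\le\liminf(a_n+b_n)$.
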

\begin{proof} See Corollary B.2 in \cite{o5}.
\end{proof}

\begin{proposition}\label{cacoli} The Skorokhod representation theorem \ref{thm-skoro} holds for every tight sequence of probability measures defined on the $\sigma$-algebra generated by the following family of maps
$$
\{C_{\textrm{w}}(\mathbb{R}_+;W^{k,p}_{\textrm{loc}})\ni f \mapsto \langle\varphi,f(t)\rangle\in \mathbb{R}\}:\;\varphi\in\mathscr D(\mathbb{R}^d,\mathbb{R}^n),\,t\in [0,\infty).
$$
\end{proposition}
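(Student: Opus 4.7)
The strategy is to reduce the proof to a direct invocation of Theorem \ref{thm-skoro}. To that end I would produce a countable family of continuous real-valued functions on the space $X := C_{w}(\mathbb{R}_+; W^{k,p}_{\textrm{loc}})$ which (i) separates points of $X$ and (ii) generates the same $\sigma$-algebra $\mathcal{M}$ as the one appearing in the statement. Once both (i) and (ii) hold, Theorem \ref{thm-skoro} applies verbatim (in particular its conclusion (j4), which is the Skorokhod-type representation we are after).

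For the countable family, fix a countable subset $(\varphi_l)_{l\in\mathbb{N}}\subset\mathscr D(\mathbb{R}^d;\mathbb{R}^n)$ that is dense in $\mathscr D(\mathbb{R}^d;\mathbb{R}^n)$ with its canonical LF topology, and a countable dense set $(t_i)_{i\in\mathbb{N}}\subset\mathbb{R}_+$, say $\mathbb{Q}_+$. Define
\[
\Phi_{l,i}:X\to\mathbb{R},\qquad \Phi_{l,i}(f):=\langle\varphi_l,f(t_i)\rangle.
\]
Continuity of each $\Phi_{l,i}$ is immediate from the definition \eqref{eqn-m+varphi} of the topology on $C_w$, since $|\Phi_{l,i}(f)|\le \|f\|_{m,\varphi_l}$ whenever the integer $m$ exceeds $t_i$. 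To verify the separation property, suppose $\Phi_{l,i}(f)=\Phi_{l,i}(g)$ for all $l,i$. Density of $(\varphi_l)$ in $\mathscr D$, together with the fact that $\mathscr D(\mathbb{R}^d;\mathbb{R}^n)$ is dense in $L^{p'}_{\textrm{loc}}$, forces $\langle \varphi,f(t_i)-g(t_i)\rangle=0$ for every $\varphi\in\mathscr D$, hence $f(t_i)=g(t_i)$ in $W^{k,p}_{\textrm{loc}}$ for every $i\in\mathbb{N}$; weak continuity of $f$ and $g$ in the time variable together with density of $(t_i)$ in $\mathbb{R}_+$ then extends this equality to all $t\ge 0$, so $f\equiv g$.

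To establish (ii), let $\mathscr S:=\sigma\{\Phi_{l,i}:l,i\in\mathbb{N}\}$. The inclusion $\mathscr S\subset\mathcal{M}$ is clear. For the reverse inclusion, fix $\varphi\in\mathscr D$ and $t\ge 0$; I would choose indices $l_k,i_k$ such that $\varphi_{l_k}\to\varphi$ in $\mathscr D$ (so all $\varphi_{l_k}$ share a common compact support $B_m$ and converge smoothly, a fortiori strongly in $L^{p'}(B_m)$) and $t_{i_k}\to t$. Weak continuity of an arbitrary $f\in X$ gives $f(t_{i_k})\rightharpoonup f(t)$ in $W^{k,p}(B_m)$, and a standard weak-strong pairing argument yields $\Phi_{l_k,i_k}(f)\to\langle\varphi,f(t)\rangle$ pointwise on $X$. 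Hence the map $f\mapsto\langle\varphi,f(t)\rangle$ is the pointwise limit of $\mathscr S$-measurable functions and therefore $\mathscr S$-measurable, which gives $\mathcal{M}\subset\mathscr S$.

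Having verified both (i) and (ii), Theorem \ref{thm-skoro} applies directly to any tight sequence of probability measures on $(X,\mathcal{M})=(X,\mathscr S)$, which is precisely the Skorokhod representation claimed. The only technically delicate point is the weak-strong convergence step used in (ii), which relies on the observation that $\mathscr D$-convergence is much stronger than weak convergence of the paired $W^{k,p}_{\textrm{loc}}$-elements; once this elementary pairing is written out, no further argument is required.
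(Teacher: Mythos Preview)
Your argument is correct and follows the standard route for verifying the hypotheses of Jakubowski's theorem (Theorem~\ref{thm-skoro}): exhibit a countable separating family of continuous real functionals and check that it generates the prescribed $\sigma$-algebra. The paper itself does not give a proof here but simply refers to Proposition~B.3 in \cite{o5}, so there is no in-paper argument to compare against; your write-up is in fact more detailed than what appears in the text.

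One small remark on the step (ii): the simultaneous limit $\Phi_{l_k,i_k}(f)\to\langle\varphi,f(t)\rangle$ via the weak--strong pairing is fine, but it implicitly uses that $\sup_k\|f(t_{i_k})\|_{L^p(B_m)}<\infty$. This does hold---for each $\psi\in L^{p'}(B_m)$ the map $s\mapsto\langle\psi,f(s)\rangle$ is continuous on $[0,T]$, hence bounded, and the uniform boundedness principle gives the norm bound---but you might state it explicitly. Alternatively, avoid the issue entirely by taking the two limits sequentially: first let $\varphi_{l}\to\varphi$ with $t_i$ fixed (pairing a fixed $f(t_i)\in L^p_{\textrm{loc}}$ against a strongly convergent sequence in $L^{p'}_{\textrm{comp}}$), then let $t_{i}\to t$ using weak continuity of $f$.
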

\begin{proof}
See Proposition B.3 in \cite{o5}.
\end{proof}

\section{Two analytic lemmata}\label{sec:analytic}

\begin{lemma}\label{lem-an-1}
Suppose that $(S,\mathcal{S},\mu)$ is a finite measure space with $\mu \geq 0$. Assume that a Borel measurable function $f:\mathbb{R}\to\mathbb{R}$  is of linear growth. Then the following functional
\begin{equation}\label{ineq_anal_1}
F(u):=\int_D f(u(x))\,\mu(dx),\; u\in L^2(D,\mu)
\end{equation}
is well defined.  Moreover, if $f$ is of $C^1$ class such that $f^\prime$ is of linear growth, then $F$ is also of $C^1$ class and
\begin{equation}\label{ineq_anal_2}
d_uF(v)=F^\prime(u)(v)=\int_D f^\prime(u(x))\,v(x)\, \mu(dx), \;u,v\in   L^2(D,\mu).
\end{equation}
\end{lemma}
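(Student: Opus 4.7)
The plan is to verify the three assertions in sequence: well-definedness of $F$, boundedness of the candidate differential, and Fr\'echet differentiability with continuity of the derivative.

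For well-definedness, note that the linear growth hypothesis gives $\vert f(t)\vert \le C_0(1+\vert t\vert)$ for some $C_0\ge 0$. Since $\mu(D)<\infty$, the Cauchy--Schwarz inequality gives $L^2(D,\mu)\subset L^1(D,\mu)$, hence $f\circ u\in L^1(D,\mu)$ and the integral defining $F(u)$ is finite.

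For the derivative candidate, fix $u\in L^2(D,\mu)$ and consider the linear functional $T_u(v):=\int_D f'(u)v\,d\mu$. Linear growth of $f'$ yields $\vert f'(u(x))\vert\le C_1(1+\vert u(x)\vert)$, and since $\mu(D)<\infty$ this forces $f'(u)\in L^2(D,\mu)$. Cauchy--Schwarz then shows $\vert T_u(v)\vert\le \Vert f'(u)\Vert_{L^2}\Vert v\Vert_{L^2}$, so $T_u\in (L^2(D,\mu))^*$.

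To prove Fr\'echet differentiability at $u$ with derivative $T_u$, I would apply the fundamental theorem of calculus pointwise to write $f(u+v)-f(u)-f'(u)v=\int_0^1 [f'(u+tv)-f'(u)]v\,dt$, integrate over $D$, and use Cauchy--Schwarz to obtain
\[
\vert F(u+v)-F(u)-T_u(v)\vert \le \Vert v\Vert_{L^2}\,\Bigl\Vert \int_0^1 [f'(u+tv)-f'(u)]\,dt\Bigr\Vert_{L^2}.
\]
It therefore suffices to show the rightmost factor tends to $0$ as $\Vert v\Vert_{L^2}\to 0$. Here I would invoke the standard subsequence principle: given any sequence $v_n\to 0$ in $L^2$, extract a subsequence with $v_n\to 0$ $\mu$-a.e.\ and $\vert v_n\vert\le V$ for some $V\in L^2(D,\mu)$. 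Continuity of $f'$ yields pointwise convergence of $f'(u+tv_n)-f'(u)$ to $0$ for each $t\in[0,1]$ and $\mu$-a.e., while the linear growth bound
\[
\vert f'(u+tv_n)-f'(u)\vert \le C_1\bigl(2+2\vert u\vert+V\bigr)
\]
(which lies in $L^2(D,\mu)$ because $\mu$ is finite and $u,V\in L^2$) provides a domination uniform in $n$ and $t$. Two applications of the Lebesgue dominated convergence theorem, first in $t$ and then in $x$, give the required $L^2$-convergence.

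Finally, to show $u\mapsto T_u$ is continuous from $L^2$ to $(L^2)^*$, observe that $\Vert T_{u_n}-T_u\Vert_{(L^2)^*}\le \Vert f'(u_n)-f'(u)\Vert_{L^2}$, so continuity reduces to continuity of the Nemytskii operator $u\mapsto f'(u)$ on $L^2(D,\mu)$, which follows by the same subsequence-plus-dominated-convergence argument. The only (mild) technical point throughout is that $L^2$-convergence does not by itself give pointwise or uniform bounds, which is why the subsequence principle together with the standard $L^p$-dominating function is essential; no deeper obstacle arises.
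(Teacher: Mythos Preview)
Your proof is correct and follows essentially the same approach as the paper: the paper establishes well-definedness via $L^2\subset L^1$ on a finite measure space, verifies the candidate derivative is a bounded linear functional by linear growth of $f'$ and Cauchy--Schwarz, and then simply declares the Fr\'echet differentiability to be ``standard''. You have filled in precisely those standard details (fundamental theorem of calculus for the remainder, subsequence-plus-dominated-convergence for the $o(\Vert v\Vert)$ estimate and for continuity of $u\mapsto T_u$), so your argument is a faithful elaboration of the paper's sketch rather than a different route.
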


\begin{proof}
The measurability and the linear growth of $f$ together with the assumption that $\mu(D)<\infty$ (what implies that $L^2(D,\mu)\subset L^1(D,\mu)$, imply that $F$ is well defined.

Let us fix for each $u\in L^2(D,\mu)$.
The linear growth of $f^\prime$ and the Cauchy-Schwartz inequality imply that  the functional $\Phi$ defined by the RHS of the inequality \ref{ineq_anal_2} is a bounded linear functional on $L^2(D,\mu)$. It is standard to  show that $F$ is Fr{\'e}chet differentiable at $u$ and that $d_F=\Phi$.
\end{proof}

\begin{lemma}\label{lem-an-2}
Suppose that an $L^2(D,\mu)$-valued  sequence $v^k$ converges weakly in $L^2(D,\mu)$ to $v\in L^2(D,\mu)$.  Suppose also that an $L^2(D,\mu)$-valued  sequence $u^k$ converges strongly in $L^2(D,\mu)$ to $u\in L^2(D,\mu)$. Then for any $\varphi \in L^\infty(D,\mu)$,
\begin{equation}\label{ineq-lem-an-2}
\int_D u^k(x)v^k(x)\varphi(x)\, dx \to  \int_D u(x)v(x)\varphi(x)\, dx .
\end{equation}

\end{lemma}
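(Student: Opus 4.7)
The plan is to split the difference into two pieces, one controlled by strong convergence and the other by weak convergence, via the identity
\begin{equation*}
\int_D u^k v^k \varphi\, d\mu - \int_D u v \varphi\, d\mu = \int_D (u^k - u) v^k \varphi\, d\mu + \int_D u (v^k - v) \varphi\, d\mu.
\end{equation*}
I would treat the two terms separately.

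For the second term, I would observe that since $u \in L^2(D,\mu)$ and $\varphi \in L^\infty(D,\mu)$, the product $u\varphi$ lies in $L^2(D,\mu)$. By the assumed weak convergence $v^k \rightharpoonup v$ in $L^2(D,\mu)$, testing against $u\varphi \in L^2(D,\mu)$ yields
\begin{equation*}
\int_D u (v^k - v) \varphi\, d\mu = \langle v^k - v, u\varphi\rangle_{L^2(D,\mu)} \longrightarrow 0.
\end{equation*}

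For the first term, I would use that a weakly convergent sequence in a Hilbert space is bounded (by the uniform boundedness principle), so $C := \sup_k \|v^k\|_{L^2(D,\mu)} < \infty$. Then by the Cauchy--Schwarz inequality and the boundedness of $\varphi$,
\begin{equation*}
\left| \int_D (u^k - u) v^k \varphi\, d\mu \right| \leq \|\varphi\|_{L^\infty(D,\mu)} \|v^k\|_{L^2(D,\mu)} \|u^k - u\|_{L^2(D,\mu)} \leq C\|\varphi\|_{L^\infty}\|u^k - u\|_{L^2},
\end{equation*}
which tends to $0$ by the assumed strong convergence $u^k \to u$ in $L^2(D,\mu)$. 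Combining the two estimates gives the desired convergence.

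There is no real obstacle here; the only subtle point is the invocation of uniform boundedness to bound $\|v^k\|_{L^2}$, which is the standard ingredient making the ``weak times strong'' principle work. Everything else is Cauchy--Schwarz and the definition of weak convergence tested against the fixed element $u\varphi \in L^2(D,\mu)$.
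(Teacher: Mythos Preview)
Your proof is correct and follows essentially the same approach as the paper: the paper uses the identical decomposition $u^kv^k\varphi - uv\varphi = (u^k-u)v^k\varphi + u(v^k-v)\varphi$, bounds the first term by Cauchy--Schwarz together with the boundedness of $\|v^k\|_{L^2}$, and handles the second by weak convergence tested against $u\varphi$. The only cosmetic difference is that you explicitly invoke the uniform boundedness principle for $\sup_k\|v^k\|_{L^2}<\infty$, whereas the paper simply asserts this boundedness.
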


\begin{proof}  We have the following sequence of inequalities

\begin{eqnarray}\label{ineq-lem-an-2b}
&& \vert \int_D u^k(x)v^k(x)\varphi(x)\, dx -  \int_D u(x)v(x)\varphi(x)\, dx\vert\\
\nonumber
 &\leq & \vert \int_D \big(u^k(x)-u(x)\big)v^k(x)\varphi(x)\, dx\vert  + \vert   \int_D (v^k(x)-v(x))u(x) \varphi(x)\, dx\vert\\
\nonumber
 &\leq & \big( \int_D |u^k(x)-u(x)|^2 \, dx \big)^{1/2} \big( \int_D |v^k(x)|^2\,dx \big)^{1/2}\vert \varphi\vert_{L^\infty}
   \\
\nonumber
   &+& \vert   \int_D (v^k(x)-v(x))u(x) \varphi(x)\, dx\vert.
\end{eqnarray}
Since on the one hand by the first assumption the sequence $\big( \int_D |v^k(x)|^2\,dx \big)^{1/2}=\vert v^k\vert_{L^2}$ is bounded and $ \int_D (v^k(x)-v(x))u(x) \varphi(x)\, dx$ converges to $0$ and on the other  by the second  assumption $\int_D |u^k(x)-u(x)|^2 \, dx $ converges to $0$, the result follows by applying  \eqref{ineq-lem-an-2b}.
\end{proof}

The same proof as above applies to the following result.
\begin{lemma}\label{lem-an-3}
Suppose that an $L_{\textrm{loc}}^2(\mathbb{R}^d)$-valued  sequence $v^k$ converges weakly in $L_{\textrm{loc}}^2(\mathbb{R}^d)$ to $v\in L_{\textrm{loc}}^2(\mathbb{R}^d)$.  Suppose also that an $L_{\textrm{loc}}^2(\mathbb{R}^d)$-valued  sequence $u^k$ converges strongly in $L_{\textrm{loc}}^2(\mathbb{R}^d)$ to $u\in L_{\textrm{loc}}^2(\mathbb{R}^d)$. Then for any  function $\varphi \in L^\infty_{\textrm{comp}}(\mathbb{R}^d)$,
\begin{equation}\label{ineq-lem-an-3}
\int_D u^k(x)v^k(x)\varphi(x)\, dx \to  \int_D u(x)v(x)\varphi(x)\, dx .
\end{equation}
\end{lemma}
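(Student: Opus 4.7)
The plan is to imitate the argument of Lemma~\ref{lem-an-2} after localizing to a bounded domain using the compact support of $\varphi$. Let $R>0$ be such that $\supp\varphi\subset B_R$. Then the convergences given in the hypothesis pass to the ball $B_R$: the restriction map $\pi_R:L^2_{\textrm{loc}}\to L^2(B_R)$ is linear and continuous, so $u^k|_{B_R}\to u|_{B_R}$ strongly in $L^2(B_R)$ and $v^k|_{B_R}\to v|_{B_R}$ weakly in $L^2(B_R)$. In particular, by the uniform boundedness principle, $\sup_k\|v^k\|_{L^2(B_R)}<\infty$.

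Next, I would use the same splitting as in \eqref{ineq-lem-an-2b}:
\begin{align*}
\left|\int_{\mathbb{R}^d}(u^kv^k-uv)\varphi\,dx\right|
&\leq \left|\int_{B_R}(u^k-u)v^k\varphi\,dx\right| + \left|\int_{B_R}(v^k-v)u\varphi\,dx\right|.
\end{align*}
By the Cauchy--Schwarz inequality the first term is bounded by
$\|u^k-u\|_{L^2(B_R)}\|v^k\|_{L^2(B_R)}\|\varphi\|_{L^\infty}$,
which tends to zero because $u^k\to u$ in $L^2(B_R)$ while $\|v^k\|_{L^2(B_R)}$ is uniformly bounded. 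For the second term, since $\varphi\in L^\infty$ and $u\in L^2(B_R)$, we have $u\varphi\in L^2(B_R)$ and can be used as a test function against the weakly convergent sequence $v^k-v$; hence this term also vanishes in the limit.

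There is essentially no obstacle: the only point worth checking is the reduction from $L^2_{\textrm{loc}}$ to $L^2(B_R)$, which is immediate from continuity of the restriction map together with the fact that pairing with $\varphi\in L^\infty_{\textrm{comp}}$ only sees values on $B_R$. Everything else is a verbatim repetition of the proof of Lemma~\ref{lem-an-2}, which is why the paper remarks that ``the same proof applies.''
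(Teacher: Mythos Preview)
Your proposal is correct and matches the paper's approach exactly: the paper's proof simply says to take the domain $D$ in the proof of Lemma~\ref{lem-an-2} so that $\supp\varphi\subset D$, which is precisely your reduction to $B_R$ followed by the same splitting and estimates.
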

\begin{proof}
The set $D$ in the proof of Lemma \ref{lem-an-2} has be chosen in such a way that $\supp \varphi \subset D$.
\end{proof}

\begin{lemma}\label{lem-an-4}
Suppose that an $L^2(D,\mu)$-valued  bounded sequence $v^k$ converges weakly in $L^1(D,\mu)$ to $v\in L^2(D,\mu)$.
Then $v^k\to v$ weakly in $L^2(D,\mu)$.
\end{lemma}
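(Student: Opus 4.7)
The plan is to verify weak $L^2(D,\mu)$-convergence directly by pairing with a generic test function $\psi\in L^2(D,\mu)$ and using a truncation to interpolate between the two pieces of information at hand: the $L^2$-boundedness of $\{v^k\}$ and the weak $L^1$-convergence. For $N\in\mathbb N$ set
\begin{equation*}
\psi_N:=\psi\,\mathbf 1_{\{|\psi|\le N\}}.
\end{equation*}
Then $\psi_N\in L^\infty(D,\mu)$, while by the dominated convergence theorem $\|\psi-\psi_N\|_{L^2(D,\mu)}\to 0$ as $N\to\infty$.

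Denote $M:=\sup_k\|v^k\|_{L^2(D,\mu)}$, which is finite by assumption. The ``tail'' contribution is then controlled uniformly in $k$ by Cauchy--Schwarz:
\begin{equation*}
\Bigl|\int_D(v^k-v)(\psi-\psi_N)\,d\mu\Bigr|\le\bigl(M+\|v\|_{L^2(D,\mu)}\bigr)\,\|\psi-\psi_N\|_{L^2(D,\mu)}.
\end{equation*}
For the ``bounded part'', since $\psi_N\in L^\infty(D,\mu)$ is a legitimate test function for weak convergence in $L^1(D,\mu)$, the hypothesis gives
\begin{equation*}
\lim_{k\to\infty}\int_D v^k\psi_N\,d\mu=\int_D v\psi_N\,d\mu\qquad\text{for every }N\in\mathbb N.
\end{equation*}

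Given $\varepsilon>0$, I would first choose $N$ so large that the Cauchy--Schwarz bound is at most $\varepsilon/2$, and then, with $N$ fixed, take $k$ large enough that the second display is within $\varepsilon/2$. Combining these two, I obtain $\int_D v^k\psi\,d\mu\to\int_D v\psi\,d\mu$ for every $\psi\in L^2(D,\mu)$, which is precisely weak convergence in $L^2(D,\mu)$. There is no serious obstacle in the argument: it is a routine ``bounded plus tail'' $\varepsilon$-argument, and the only structural point is that weak $L^1$-convergence is by definition tested against $L^\infty$-functions, to which each truncation $\psi_N$ belongs, so the truncation at height $N$ is precisely the device that couples the two hypotheses.
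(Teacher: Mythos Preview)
Your argument is correct: the truncation $\psi_N=\psi\mathbf 1_{\{|\psi|\le N\}}$ lies in $L^\infty(D,\mu)=(L^1(D,\mu))^*$, so the weak-$L^1$ hypothesis handles the bounded part, while the uniform $L^2$-bound and Cauchy--Schwarz absorb the tail $\psi-\psi_N$, whose $L^2$-norm tends to zero by dominated convergence. The paper states this lemma without proof, so there is nothing to compare against; your direct $\varepsilon$-argument is the standard way to prove it, and one could equally well argue via reflexivity of $L^2$ (every subsequence has a weakly $L^2$-convergent sub-subsequence whose limit must coincide with $v$ by testing against $L^\infty\cap L^2$), but your route is just as clean.
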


\section{A measurability lemma}\label{sec:measurability}

Let $X$ be a separable Fr\'echet space (with a countable system of pseudonorms $(\|\cdot\|_k)_{k\in\mathbb{N}}$, let $X_k$ be separable Hilbert spaces and $i_k:X\to X_k$ linear mappings such that $\|i_k(x)\|_{X_k}=\|x\|_k$, $k\ge 1$. Let $\varphi_{k,j}\in X^*_k$, $j\in\mathbb{N}$ separate points of $X_k$. Then the mappings $(\varphi_{k,j}\circ i_k)_{k,j\in\mathbb{N}}$ generate the Borel $\sigma$-algebra on $X$.
\begin{proof}
See Appendix C in \cite{o5}.
\end{proof}

\begin{corollary}\label{aml1} There exists a countable system $\varphi_k\in\mathscr D$ such that  for every   $m\ge 0$ the mappings
$$
H^m_{\textrm{loc}}\ni h\mapsto\langle h,\varphi_k\rangle_{L^2}\in\mathbb{R},\qquad k\in\mathbb{N}
$$
generate the Borel $\sigma$-algebra on $H^m_{\textrm{loc}}$.
\end{corollary}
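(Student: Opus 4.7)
The plan is to reduce the corollary to the preceding Fréchet-space measurability lemma. Fix once and for all a countable family $\{\varphi_k\}_{k\in\mathbb{N}}\subset\mathscr D(\mathbb{R}^d;\mathbb{R}^n)$ constructed as follows: for every $j\in\mathbb{N}$ choose a countable set $\mathcal F_j\subset C^\infty_{\textrm{c}}(B_j;\mathbb{R}^n)$ which is dense in $L^2(B_j;\mathbb{R}^n)$ (possible since $L^2(B_j)$ is separable and $C^\infty_{\textrm{c}}(B_j)$ is dense therein), and take $\{\varphi_k\}$ to be any enumeration of $\bigcup_{j\in\mathbb{N}}\mathcal F_j$. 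This single family will be shown to work for every $m\ge 0$ simultaneously.

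Next I will verify that this family fits the hypotheses of the preceding lemma applied to $X=H^m_{\textrm{loc}}$. The natural seminorms are $p_j(h)=\|h\|_{H^m(B_j)}$, with associated Hilbert spaces $X_j=H^m(B_j)$ and restriction maps $i_j:H^m_{\textrm{loc}}\to H^m(B_j)$, which are isometries in the sense $\|i_j(h)\|_{X_j}=p_j(h)$. For every $\varphi\in\mathcal F_j$ I note the factorisation
\begin{equation*}
\langle h,\varphi\rangle_{L^2(\mathbb{R}^d)}=\langle i_j(h),\varphi\rangle_{L^2(B_j)},\qquad h\in H^m_{\textrm{loc}},
\end{equation*}
and the right-hand side is a continuous linear functional on $H^m(B_j)$ because of the continuous embedding $H^m(B_j)\hookrightarrow L^2(B_j)$ together with the bound $|\langle i_j(h),\varphi\rangle_{L^2(B_j)}|\le \|\varphi\|_{L^2}\,\|h\|_{H^m(B_j)}$.

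The key separation step: for each fixed $j$ the family $\{\langle\cdot,\varphi\rangle_{L^2(B_j)}:\varphi\in\mathcal F_j\}$ separates the points of $H^m(B_j)$. Indeed, if $h\in H^m(B_j)\subset L^2(B_j)$ and $\langle h,\varphi\rangle_{L^2(B_j)}=0$ for every $\varphi\in\mathcal F_j$, the $L^2$-density of $\mathcal F_j$ forces $h=0$. Crucially, this argument does not depend on $m$, so the same family serves every $m\ge 0$. Applying the preceding lemma to $X=H^m_{\textrm{loc}}$ with the functionals $\varphi_{j,\ell}=\langle\cdot,\varphi\rangle_{L^2(B_j)}$ (indexed by $\varphi\in\mathcal F_j$) yields that the countable system $\{h\mapsto\langle h,\varphi_k\rangle_{L^2}:k\in\mathbb{N}\}$ generates the Borel $\sigma$-algebra on $H^m_{\textrm{loc}}$.

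The only subtlety I anticipate is precisely the simultaneous choice of one family for all $m$, and it is handled by the observation above: separation of $L^2(B_j)$ by $\mathcal F_j$ automatically separates the smaller space $H^m(B_j)$ for every $m\ge 0$, so no $m$-dependence enters the construction.
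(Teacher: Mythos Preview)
Your proof is correct and follows exactly the route the paper intends: the corollary is an immediate application of the preceding Fr\'echet-space measurability lemma with $X=H^m_{\textrm{loc}}$, $X_j=H^m(B_j)$, $i_j$ the restriction map, and separating functionals given by $L^2$-pairing against a countable $L^2$-dense family of test functions supported in the balls $B_j$. Your observation that $L^2(B_j)$-separation automatically yields $H^m(B_j)$-separation for every $m\ge 0$ is precisely what makes a single $m$-independent family $\{\varphi_k\}$ work.
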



\begin{thebibliography}{99}

\bibitem{Adams_2003} R.A. Adams  and  J.F. Fournier,  {\sc  Sobolev spaces}. Second edition. Pure and Applied Mathematics (Amsterdam), 140. Elsevier/Academic Press, Amsterdam, 2003.

\bibitem{Brz_1997} Brze{\'z}niak, Z.: \textit{On stochastic convolution in Banach spaces and applications},  Stochastics
Stochastics Rep. \textbf{61}, 245295 (1997)



\bibitem{Brz+Carr_2009} Z. Brze{\'z}niak and  A. Carroll,  \textit{The stochastic nonlinear heat equation}, in preparation

\bibitem{Brz+Elw_2000} Z. Brze\'zniak, K.D  Elworthy,  \textit{ Stochastic differential equations on Banach manifolds}. Methods Funct. Anal. Topology \textbf{6}, no. 1, 43--84 (2000)

\bibitem{Brz+Ondr_2007}  Z. Brze\'zniak  and  M. Ondrej\'at,   \textit{Strong solutions to stochastic wave equations with values in Riemannian manifolds}.  J. Funct. Anal.  \textbf{253},  no. 2, 449-481 (2007)

\bibitem{Brz+Ondr_2009} Z. Brze\'{z}niak, M. Ondrej\'at, \textit{Stochastic wave equations with values in Riemannian manifolds}, to appear in {\sc Stochastic Partial Differential Equations and Applications v. VIII},  Quaderni di Matematica (2010)


\bibitem{Brz+Pesz_1999}    Z. Brze\'zniak  and  S. Peszat,  \textit{Space-time continuous solutions to SPDE's driven by a homogeneous Wiener process},  Studia Math.  \textbf{137},  no. 3, 261--299  (1999)


\bibitem{Carr_1999} A. Carroll,  {\sc The stochastic nonlinear heat equation}, PhD thesis, University of Hull, 1999.
\bibitem{Cabana_1972}  Caba\~na, E., \textit{ On barrier problems for the vibrating string}, Z. Wahrsch. Verw. Gebiete \textbf{22}, 13-24 (1972)
\bibitem{Carm+Nual_1988}  Carmona, R. and  Nualart, D., \textit{Random nonlinear wave equations: propagation of singularities}, Ann. Probab. \textbf{16}, no. 2,  730-751  (1988)
\bibitem{Carm+Nual_1988_b}  Carmona, R. and  Nualart,  D., \textit{Random nonlinear wave equations: smoothness of the solutions}, Probab. Theory Related Fields \textbf{79}, no. 4,  469-508 (1988)

\bibitem{r24} T. Cazenave,  J. Shatah and  A.S. Tahvildar-Zadeh,   \textit{Harmonic maps of the hyperbolic space and development of singularities in wave maps and Yang-Mills fields}. Ann. Inst. H. Poincar Phys. Thor. \textbf{68} (1998), no. 3, 315--349.



\bibitem{Chow_2002} P.-L. Chow, \textit{Stochastic wave equations with polynomial nonlinearity}, Ann. Appl. Probab. \textbf{12}, no. 1 (2002) 361-381.
\bibitem{Ch-Mich_1979} A. Chojnowska-Michalik,  \textit{Stochastic differential equations in Hilbert spaces}, Probability theory, Banach center publications , Vol. \textbf{5}, 1979


\bibitem{Chr+T-Z_1993} D. Christodoulou and A.S. Tahvildar-Zadeh,  \textit{On the regularity of spherically symmetric wave maps}. Comm. Pure Appl. Math. \textbf{46} (1993), no. 7, 1041--1091.


\bibitem{Dal+Fr_1998} R.~C. Dalang, N.~E. Frangos, \textit{The stochastic wave equation in two spatial dimensions}, Ann. Probab. \textbf{26}, no. 1 (1998) 187-212.

\bibitem{Dal_1999} R.~C. Dalang, \textit{Extending the martingale measure stochastic integral with applications to spatially homogeneous s.p.d.e.'s},  Electron. J. Probab. \textbf{4}, no. 6 (1999) 1-29.



\bibitem{Dal+Lev_2004} R.~C. Dalang, O. L\'ev\^eque, \textit{Second-order linear hyperbolic SPDEs driven by isotropic Gaussian noise on a sphere}, Ann. Probab. \textbf{32}, no. 1B (2004) 1068-1099.
\bibitem{DaP+Z_1992} G. Da~Prato and   J. Zabczyk,  {\sc Stochastic equations in infinite dimensions}, Cambridge University Press, Cambridge 1992.


\bibitem{Elworthy_1982} K.D. Elworthy,
{\sc  Stochastic Differential  Equations on  Manifolds}, London  Math.
Soc. LNS v. \textbf{70}, Cambridge University Press 1982



\bibitem{Flandoli_G_1995} F. Flandoli, D.  G\c atarek,
\textit{Martingale and stationary solutions for stochastic Navier-Stokes equations},
Probab. Theory Related Fields \textbf{102}, no. 3, 367--391 (1995)

\bibitem{Freire_1996} A. Freire,  \textit{Global weak solutions of the wave map system to compact Riemannian homogeneous spaces}. Manuscripta Math. \textbf{91}, no. 4, 525--533 (1996)

\bibitem{Friedman_1969} A. Friedman,
\newblock{\sc Partial differential equations, }
\newblock{ Holt, Rinehart and Winston, Inc., 1969}


\bibitem{Garsia+R+R_1970} A. M. Garsia, E. Rodemich and H. Rumsey, Jr., \textit{ A real variable lemma and the continuity of paths of some Gaussian processes, Indiana Univ. Math. J.}, \textbf{ 20} (1970)  565-578.

\bibitem{Gin+Vel_1982} J. Ginibre, G.  Velo,  \textit{The Cauchy problem for the ${\rm O}(N),\,C{\rm P}(N-1),$ and $G\sb{C}(N,\,p)$ models}, Ann. Physics \textbf{142}, no. 2, 393--415 (1982)


\bibitem{Gu_1980}  C. H. Gu, \textit{ On the Cauchy problem for harmonic maps defined on two-dimensional Minkowski space}, Comm. Pure Appl. Math. \textbf{33} (1980), no. 6, 727--737.


\bibitem{Ham_1975} R.S. Hamilton,  {\sc Harmonic maps of manifolds with boundary}, Lecture Notes in Mathematics, vol. \textbf{471}. Springer-Verlag, Berlin-New York, 1975.

\bibitem{Haus_Seidler_2001} E. Hausenblas and J. Seidler,  \textit{A note on maximal inequality for stochastic convolutions},  Czechoslovak Math. J.  \textbf{51}(126),  no. 4, 785--790   (2001)


\bibitem{Helein_1991} F. H\'elein,   \textit{Regularity of weakly harmonic maps from a surface into a manifold with symmetries},  Manuscripta Math.  \textbf{70}  (1991),  no. 2, 203--218.

\bibitem{Jakub_1997} A. Jakubowski,  \textit{The almost sure Skorokhod representation for subsequences in nonmetric spaces}.  Teor. Veroyatnost. i Primenen.  \textbf{42}  (1997),  no. 1, 209--216;  translation in  Theory Probab. Appl.  \textbf{42}  (1997),  no. 1, 167--174 (1998)

\bibitem{Karcz+Z_1998} A. Karczewska,  J. Zabczyk,   \textit{A note on stochastic wave equations}, in: Evolution equations and their applications in physical and life sciences (Bad Herrenalb, 1998), 501-511, Lecture Notes in Pure and Appl. Math. \textbf{215}, Dekker, New York, 2001.

\bibitem{Karcz+Z_1999} A. Karczewska,  J. Zabczyk,   \textit{Stochastic PDE's with function-valued solutions}, in: Infinite dimensional stochastic analysis (Amsterdam, 1999), 197-216, Verh. Afd. Natuurkd. 1. Reeks. K. Ned. Akad. Wet., 52, R. Neth. Acad. Arts Sci., Amsterdam, 2000.

\bibitem{Kelley_1975} J. L. Kelley, {\sc General topology}, Springer-Verlag,  1975.

\bibitem{kir}  Kirillov, A., Jr., {\sc An introduction to Lie groups and Lie algebras}. Cambridge Studies in Advanced Mathematics, 113. Cambridge University Press, Cambridge, 2008.


\bibitem{Kneis_1977} G. Kneis, \textit{ Zum Satz von Arzela-Ascoli in pseudouniformen R{\"a}umen}
Math. Nachr. \textbf{79}, 49--54 (1977)

\bibitem{lady+Shubov_1981} O. A. Ladyzhenskaya and  V. I. Shubov,  \textit{On the unique solvability of the Cauchy problem for equations of two-dimensional relativistic chiral fields with values in complete Riemannian manifolds}. (Russian) Boundary value problems of mathematical physics and related questions in the theory of functions, 13.  Zap. Nauchn. Sem. Leningrad. Otdel. Mat. Inst. Steklov. (LOMI)  110   81--94, 242--243 (1981)


\bibitem{Marcus+M_1991}  M. Marcus and   V.~J. Mizel,     \textit{Stochastic hyperbolic systems and the wave equation}, Stochastics Stochastic Rep. \textbf{36} (1991) 225-244

\bibitem{Masl+S_1993} B. Maslowski, J.  Seidler,  and I. Vrko\v c,   \textit{Integral continuity and stability for stochastic hyperbolic equations}, Differential Integral Equations \textbf{6}, no. 2  355-382 (1993)

\bibitem{Mill+M_2001} A. Millet, P.-L. Morien, \textit{On a nonlinear stochastic wave equation in the plane: existence and uniqueness of the solution}, Ann. Appl. Probab. \textbf{11}, no. 3 (2001) 922-951.

\bibitem{Mill+SS_1999} A. Millet, M. Sanz-Sol\'e,  \textit{A stochastic wave equation in two space dimension: smoothness of the law}, Ann. Probab. \textbf{27}, no. 2,   803-844 (1999)


\bibitem{Moore+Schl_1980} J.D. Moore, R. Schlafly, \textit{On equivariant isometric embeddings}. Math. Z. \textbf{173} (1980), no. 2, 119--133.



\bibitem{Muller+Struwe_1996} S. M\"uller,  M. Struwe,  \textit{Global existence of wave maps in $1+2$ dimensions with finite energy data}. Topol. Methods Nonlinear Anal. \textbf{7} (1996), no. 2, 245--259.


\bibitem{Nash_1956}  J. Nash,  \textit{The imbedding problem for Riemannian manifolds}, \textit{Ann. of Math.} (2) \textbf{ 63}, 20-63  (1956)



\bibitem{o1} M. Ondrej\'at,   \textit{ Existence of global mild and strong solutions to stochastic hyperbolic evolution equations driven by a spatially homogeneous Wiener process}.  J. Evol. Equ.  \textbf{4}  (2004),  no. 2, 169--191

\bibitem{o2} M. Ondrej\'at,   \textit{ Uniqueness for stochastic evolution equations in Banach spaces}.  Dissertationes Math. \textbf{426}  (2004), 63 pp.

\bibitem{o3} M. Ondrej\'at,    \textit{Brownian representations of cylindrical local martingales, martingale problem and strong Markov property of weak solutions of SPDEs in Banach spaces}. Czechoslovak Math. J. \textbf{55} (130) (2005), no. 4, 1003--1039.

\bibitem{o4} M. Ondrej\'at,   \textit{Existence of global martingale solutions to stochastic hyperbolic equations driven by a spatially homogeneous Wiener process}, Stoch. Dyn. \textbf{6}, no. 1 (2006) 23-52.

\bibitem{o5} M. Ondrej\'at, Stochastic nonlinear wave equations in local Sobolev spaces, Electronic journal of probability, Vol. 15 (2010) 1041-1091.


\bibitem{ONeill_1983} B. O'Neill,  {\sc Semi-Riemannian geometry. With applications to relativity}, Pure and Applied Mathematics, \textbf{103}. Academic Press, Inc., New York, 1983.

\bibitem{onvi} A. L. Onishchik, E. B. Vinberg,  {\sc Foundations of Lie theory.  Lie groups and Lie algebras, I}. Encyclopaedia Math. Sci., 20, Springer, Berlin, 1993.


\bibitem{Pesz_2002} S. Peszat,  \textit{ The Cauchy Problem for a Non Linear Stochastic Wave Equation in any Dimension}, J. Evol. Equ. \textbf{2}, 383-394 (2002)

\bibitem{Pesz+Zab_1997} S. Peszat,  J. Zabczyk,  Stochastic evolution equations with a spatially homogeneous Wiener process, Stochastic Processes and Appl. \textbf{ 72}, 187-204 (1997).

\bibitem{Pesz+Zab_2000} S. Peszat,  J. Zabczyk,  Non Linear Stochastic Wave and Heat Equations, Probability Theory Related Fields \textbf{ 116}, No. 3, 421-443, 2000.



\bibitem{Rudin_1991_FA} Rudin, W.: {\sc Functional analysis}. Second edition. International Series in Pure and Applied Mathematics. McGraw-Hill, Inc., New York, 1991.
Rudin, Walter

\bibitem{Rudin_1987_RCA} Rudin, W.: {\sc  Real and complex analysis.} Third edition. McGraw-Hill Book Co., New York, 1987.


\bibitem{Seidler_1993} Seidler J.: \textit{Da Prato-Zabczyks maximal inequality revisited I.} Math. Bohem. \textbf{118}, 67106 (1993)

\bibitem{Shatah+Struwe_1998} J. Shatah,  Struwe, Michael,  {\sc Geometric wave equations}. Courant Lecture Notes in Mathematics, \textbf{2}. New York University, Courant Institute of Mathematical Sciences, New York; American Mathematical Society, Providence, RI, 1998.


\bibitem{Shatah_1988} J. Shatah,   \textit{Weak solutions and development of singularities of the ${\rm SU}(2)$ $\sigma$-model}. Comm. Pure Appl. Math. \textbf{41} (1988), no. 4, 459--469.



\bibitem{Tataru_2004} D. Tataru,  \textit{The wave maps equation}. Bull. Amer. Math. Soc. (N.S.) \textbf{41} (2004), no. 2, 185--204.

\bibitem{Triebel_1978}   Triebel, H., {\sc  Interpolation   Theory,   Function  Spaces, Differential
Operators }, North-Holland,  Amsterdam New York Oxford, 1978



\bibitem{Zhou_1999} Y. Zhou,  \textit{Uniqueness of weak solutions of $1+1$ dimensional wave maps}. Math. Z. \textbf{232} (1999), no. 4, 707--719.




\end{thebibliography}
\end{document}